\theoremstyle{plain}
\newtheorem{theorem}{Theorem}[section]
\newtheorem{corollary}[theorem]{Corollary}
\newtheorem{lemma}[theorem]{Lemma}
\newtheorem{proposition}[theorem]{Proposition}
\newtheorem{remarkcnt}[theorem]{Remark}
\newtheorem{notation}[theorem]{Notation}
\newtheorem{fact}[theorem]{Fact}
\newtheorem*{theorem*}{Theorem}
\newtheorem{question}[theorem]{Question}
\theoremstyle{definition}
\newtheorem{definition}[theorem]{Definition}
\theoremstyle{remark}
\newcommand{\Q}{\mathbb{Q}}
\newcommand{\R}{\mathbb{R}}
\newcommand{\F}{\mathbb{F}}
\newcommand{\set}[1]{\left\{ {#1} \right\}}
\newcommand{\vect}[1]{\langle {#1} \rangle}
\newcommand{\abs}[1]{\lvert {#1} \rvert}
\newcommand{\ACF}{\mathrm{ACF}}
\newcommand{\PAC}{\mathrm{PAC}}
\newcommand{\PSF}{\mathrm{PSF}}
\newcommand{\ACFA}{\mathrm{ACFA}}
\newcommand{\NSOP}{\mathrm{NSOP}}
\newcommand{\characteristic}{\mathrm{char}}
\newcommand{\trdeg}{\mathrm{trdeg}}
\newcommand{\tp}{\mathrm{tp}}
\newcommand{\acl}{\mathrm{acl}}
\newcommand{\dcl}{\mathrm{dcl}}
\newcommand{\M}{\mathbb{M}}
\newcommand{\fr}{\mathrm{Frac}}
\newcommand{\Th}{\mathrm{Th}}
\newcommand{\forkindep}[1][]{%
  \mathrel{
    \mathop{
      \vcenter{
        \hbox{\oalign{\noalign{\kern-.3ex}\hfil$\vert$\hfil\cr
              \noalign{\kern-.7ex}
              $\smile$\cr\noalign{\kern-.3ex}}}
      }
    }\displaylimits_{#1}
  }
}
\title{On algebraically closed fields with a distinguished subfield}
\author{Christian d\textquoteright Elb\'ee}
\address{Fields Institute for Research in Mathematical Sciences Office 416
222 College Street. Toronto, Ontario. Canada.}
\email{cdelbee@fields.utoronto.ca}
\urladdr{\href{http://choum.net/\textasciitilde chris/page\textunderscore perso/}{http://choum.net/\textasciitilde chris/page\textunderscore perso/}}
\author{Itay Kaplan}
\address{Einstein Institute of Mathematics, Hebrew University of
Jerusalem, 91904, Jerusalem Israel.}
\email{kaplan@math.huji.ac.il}
\urladdr{\href{http://math.huji.ac.il/~kaplan}{math.huji.ac.il/~kaplan}}
\author{Leor Neuhauser}
\address{Einstein Institute of Mathematics, Hebrew University of
Jerusalem, 91904, Jerusalem Israel.}
\email{leor.neuhauser@math.huji.ac.il}
\thanks{
The authors would like to thank the Israel Science Foundation for their
support of this research (grant no. 1254/18). The first-named author was partially supported by the S.A Schonbrunn Fellowship.
This paper was done as part of the third-named author's master thesis under the supervision of the first- and second-named authors.}
\subjclass[2010]{03C45, 03C10, 03C60}
\begin{document}

\begin{abstract}
    This paper is concerned with the model-theoretic study of pairs $(K,F)$ where $K$ is an algebraically closed field and $F$ is a distinguished subfield of $K$ allowing extra structure. We study the basic model-theoretic properties of those pairs, such as quantifier elimination, model-completeness and saturated models. We also prove some preservation results of classification-theoretic notions such as stability, simplicity, NSOP$_1$, and NIP. As an application, we conclude that a PAC field is NSOP$_1$ iff its absolute Galois group is (as a profinite group). 
\end{abstract}

\maketitle


\setcounter{tocdepth}{1}
\tableofcontents

\section{Introduction}
\iftoggle{THESIS}{
In their study of pseudo-algebraically closed fields, or PAC fields (known at that time as regularly closed fields, for obvious reasons, see \cref{def PSF}) Cherlin, van den Dries and Macintyre \cite{CDM80, CDM81} described elementary invariants for those fields. This was inspired by the work of Ax on pseudo-finite fields. Among those invariants is the elementary theory of the absolute Galois group of those fields in a suitable omega-sorted language, called the \emph{inverse system of the absolute Galois group}. It was already clear to the authors of \cite{CDM80, CDM81} that this invariant is an essential tool for the study of PAC fields. The intuition that the model theoretic complexity of the theory of PAC fields is mainly controlled by the theory of its absolute Galois group has since then been confirmed by numerous results. For example, Chatzidakis \cite{Cha19} proved that if the inverse system of the absolute Galois group of a PAC field is NSOP$_n$ ($n>2$), then so is the theory of the field. Ramsey \cite{ramsey2018} proved the corresponding results for NTP$_1$ and NSOP$_1$. It is a fact that the inverse system of the absolute Galois group of a field $F$ is interpretable in the theory of the pair $(K,F)$ for any algebraically closed field $K$ extending $F$ (see \cite[Proposition 5.5]{chatzidakis2002}). This motivated our interest in the model-theoretic study of such pairs $(K,F)$. 

The model-theoretic study of pairs of fields goes back to Tarski when he raised in \cite{Tar51} the question of the decidability of the pair $(\R,\R\cap \Q^{\mathrm{alg}})$ (the reals with a predicate for the reals algebraic over $\Q$). The (positive) answer was given by Robinson in \cite{Rob59}, who gave a full set of axioms for the theories of $(\R,\R\cap \Q^{\mathrm{alg}})$ and $(\mathbb{C}, \Q^{\mathrm{alg}})$. The celebrated work of Morley and of Shelah in the 70{'}s created a growing interest in classification of first-order theories, and in particular of theories of fields and their expansions. It was known since the 80s that the theory of $(\mathbb{C}, \Q^{\mathrm{alg}})$ is stable\footnote{See the first sentence of \cite{Poizat1983}.} and Poizat \cite{Poizat1983} generalized this result to a more general context: he gave a criterion for the stability of special pairs of elementary substructures $N \succeq M$ (called ``belle paires"), under a strong stability assumption on the theory of $M$ (and $N$) called \emph{nfcp}, introduced by Keisler \cite{Kei67}. This was later generalized to the context of simple theories \cite{BENYAACOV2003235} with the notion of lovely pairs. Back to algebraically closed fields, Delon \cite{Delon2012} introduced a language for quantifier elimination for pairs of algebraically closed fields and pairs of algebraically closed valued fields. Recently, Martin-Pizarro and Ziegler \cite{MARTIN_PIZARRO_2020} proved that the theory of proper pairs of algebraically closed fields is equational, by a deep analysis of definable sets.

As was mentioned above, the main topic of this chapter is another generalization of pairs of algebraically closed fields, which are pairs $(K,F)$ where $F$ is an arbitrary field, perhaps with some extra structure (in a language extending the language of fields), and $K\supseteq  F$ is an algebraically closed infinite extension. An early result about this theory was given by Keisler \cite{Keisler1964}: if $F$ and $F'$ are two elementarily equivalent fields (not real-closed nor algebraically closed and without extra structure), then the pairs $(K,F)$ and $(K',F')$ are also elementarily equivalent, for any algebraically closed extensions $K\supsetneq eq  F$, $K'\supsetneq  F'$. In \cite{HKR18}, Hils, Kamensky and Rideau gave a quantifier elimination result for the theory of the pairs $(K,F)$, which we also obtain in \cref{quantifier elimination}. We were not aware of this result while writing the proof and we decided to keep our proof for completeness.

The purpose of this chapter is twofold. For one, we are interested in the basic properties of the theory of the pairs such as saturated models, completeness, quantifier elimination and model-completeness. For example, as we mentioned above we prove quantifier elimination for the theory of pairs $(K,F)$ (see \cref{quantifier elimination}) in a natural expansion of the language following Delon's approach \cite{Delon2012}. This allows us to isolate a condition implying the model-completeness of the theory of the pair $(K,F)$ which is weaker than the model completeness of the theory of $F$ (see \cref{model completeness}). Secondly, we prove preservation of several classification-theoretic properties: if the theory of $F$ is ($\omega$-/super) stable/NIP/simple/NSOP$_1$, then so is the theory of the pair $(K,F)$ (see \cref{omega-stable,superstable,stable,NIP,simple,nsop1}). In the case of NSOP$_1$, we also identify Kim-independence for algebraically closed sets (see \cref{kim independence}).

As immediate applications we conclude that the theory of a PAC field $F$ in the language of rings is NSOP$_1$ iff the theory of its Galois group is (see \cref{galois group nsop1}) and prove that when $F$ is pseudofinite in the language of rings the theory of the pair $(K,F)$ is simple. In addition, we consider the theory ACF$^I$ of a chain of algebraically closed fields ordered by some linear order $I$, and discuss its properties depending on the order type of $I$ (see \cref{tuples of acf order type}).
}{
In their study of pseudo-algebraically closed fields, or PAC fields (known at that time as regularly closed fields, for obvious reasons, see \cref{def PSF}) Cherlin, van den Dries and Macintyre \cite{CDM80, CDM81} described elementary invariants for those fields. This was inspired by the work of Ax on pseudo-finite fields. Among those invariants is the elementary theory of the absolute Galois group of those fields in a suitable omega-sorted language, called the \emph{inverse system of the absolute Galois group}. It was already clear to the authors of \cite{CDM80, CDM81} that this invariant is an essential tool for the study of PAC fields. The intuition that the model theoretic complexity of the theory of PAC fields is mainly controlled by the theory of its absolute Galois group was confirmed by numerous results since then. For example, Chatzidakis \cite{Cha19} proved that if the inverse system of the absolute Galois group of a PAC field is NSOP$_n$ ($n>2$), then so is the theory of the field. Ramsey \cite{ramsey2018} proved the corresponding results for NTP$_1$ and NSOP$_1$. It is a fact that the inverse system of the absolute Galois group of a field $F$ is interpretable in the theory of the pair $(K,F)$ for any algebraically closed field $K$ extending $F$ (see \cite[Proposition 5.5]{chatzidakis2002}). This motivated our interest in the model-theoretic study of such pairs $(K,F)$. 

The model-theoretic study of pairs of fields goes back to Tarski when he raised in \cite{Tar51} the question of the decidability of the pair $(\R,\R\cap \Q^{\mathrm{alg}})$ (the reals with a predicate for the reals algebraic over $\Q$). The (positive) answer was given by Robinson in \cite{Rob59}, who gave a full set of axioms for the theories of $(\R,\R\cap \Q^{\mathrm{alg}})$ and $(\mathbb{C}, \Q^{\mathrm{alg}})$. The celebrated work of Morley and of Shelah in the 70s created a growing interest in classification of first-order theories, and in particular of theories of fields and their expansions. It was known since the 80{'}s that the theory of $(\mathbb{C}, \Q^{\mathrm{alg}})$ is stable\footnote{See the first sentence of \cite{Poizat1983}.} and Poizat \cite{Poizat1983} generalized this result to a more general context: he gave a criterion for the stability of special pairs of elementary substructures $N \succeq M$ (called ``belle paires"), under a strong stability assumption on the theory of $M$ (and $N$) called \emph{nfcp}, introduced by Keisler \cite{Kei67}. This was later generalised to the context of simple theories \cite{BENYAACOV2003235} with the notion of lovely pairs. Back to algebraically closed fields, Delon \cite{Delon2012} introduced a language for quantifier elimination for proper pairs of algebraically closed fields (which are models of the theory of belles paires of algebraically closed fields) and proper pairs of algebraically closed valued fields. Recently, Martin-Pizarro and Ziegler \cite{MARTIN_PIZARRO_2020} proved that the theory of proper pairs of algebraically closed fields is equational, by a deep analysis of definable sets.

As was mentioned above, the main topic of this paper is another generalization of pairs of algebraically closed fields which are pairs $(K,F)$ where $F$ is an arbitrary field, perhaps with some extra structure (in a language extending the language of rings), and $K\supseteq  F$ is an algebraically closed field, such that the degree of $K$ over $F$ is infinite. An early result about this theory was given by Keisler \cite{Keisler1964}: if $F$ and $F'$ are two elementarily equivalent fields (not real-closed nor algebraically closed and without extra structure), then the pairs $(K,F)$ and $(K',F')$ are also elementarily equivalent, for any algebraically closed extensions $K\supsetneq  F$, $K'\supsetneq  F'$. In \cite{HKR18}, Hils, Kamensky and Rideau gave a quantifier elimination result for the theory of the pairs $(K,F)$, which we also obtain in \cref{quantifier elimination} (we became aware of their work only after we finished writing our proof and we decided to keep it for completeness).

The purpose of this paper is twofold: (1) investigate the basic logical properties of the theory of such pairs and (2) prove preservation results of several classification-theoretic properties. 

For (1), we discuss saturated models, completeness, quantifier elimination and model-completeness. For example, as we mentioned above we prove quantifier elimination for the theory of pairs $(K,F)$ (see \cref{quantifier elimination}) in a natural expansion of the language following Delon's approach \cite{Delon2012}. This allows us to isolate a condition implying the model-completeness of the theory of the pair $(K,F)$ which is weaker than the model completeness of the theory of $F$ (see \cref{model completeness}). For (2), we prove preservation of several classification-theoretic properties: if the theory of $F$ is ($\omega$-/super) stable/NIP/simple/NSOP$_1$, then so is the theory of the pair $(K,F)$ (see \cref{omega-stable,superstable,stable,NIP,simple,nsop1}). In the case of NSOP$_1$, we also identify Kim-independence for algebraically closed sets (see \cref{kim independence}).

As immediate applications we conclude that:
\begin{enumerate}
    \item The theory of a PAC field $F$ in the language of rings is NSOP$_1$ if and only if the theory of its Galois group is (see \cref{galois group nsop1}).
    \item When $F$ is pseudofinite in the language of rings, then the theory of the pair $(K,F)$ is simple.   
\end{enumerate}
  In addition, we consider the theory ACF$^I$ of a chain of algebraically closed fields ordered by some linear order $I$, and discuss its properties depending on the order type of $I$ (see \cref{tuples of acf order type}).\\

\noindent \textbf{Acknowledgement.} The authors would like to thank Zo\'e Chatzidakis for her useful comments and give a special thanks to Nick Ramsey for valuable discussions and ideas in this project. We would also like to thank Anand Pillay for his comments leading us to \cref{strongly minimal stable} and \cref{question strongly minimal}.
We would also like to thank the anonymous referee for their careful reading and their comments. 
}

\section{Preliminaries} \label{subfield preliminaries}

In this section we present common definitions and results from fields and model theory. 
We will start by setting up some basic notation for the whole paper.
\begin{notation}
  Whenever $A$ is a field, let $\overline{A}$ be its algebraic closure.
  Whenever $A$ and $B$ are subfields of a larger field, let $A.B$ be their field compositum.
  If $A$ is a field and $S$ is a set, then let $A(S)$ be the field extension of $A$ by the elements of S.
  Say that the set $S$ is algebraically independent over $A$ if each element $s\in S$ is algebraically independent over $A(S\setminus\set{s})$.
  If $R$ is a sub-ring of a larger field, then denote by $\fr(R)$ the field generated by $R$.
  Unless specified otherwise, all the fields will be subfields of a large algebraically closed field.
\end{notation}

\subsection{Linear disjointness}

\begin{definition}
  Let $A$, $B$ and $C$ be fields with $C\subseteq A\cap B$.
  \begin{enumerate}
      \item Say that $A$ is \emph{linearly disjoint} from $B$ over $C$ if whenever $a_0,\dots ,a_{n-1}\in A$ are linearly independent over $C$ they are also linearly independent over $B$. 
      Denote this by $A\forkindep_C^l B$.
      \item Say that $A$ is \emph{algebraically disjoint} from $B$ over $C$ if whenever $a_0,\dots ,a_{n-1} \in A$ are algebraically independent over $C$, then they are also algebraically independent over $B$. 
      This is the same as the non-forking independence in $\ACF$, which we will denote $A\forkindep^\ACF_C B$.
  \end{enumerate}
\end{definition}

\begin{fact}[{\cite[Proposition 20.2]{morandi1996field}}] \label{linear disjointedness tensor products}
  Let $A$, $B$ and $C$ be fields with $C\subseteq A\cap B$.
  Construct a map $A\otimes_C B\to A[B]$ by mapping $a\otimes b\mapsto ab$.
  This map is an isomorphism iff $A\forkindep^l_C B$.
\end{fact}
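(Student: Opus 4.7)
The plan is to first dispose of the two easy parts — well-definedness and surjectivity — and then reduce the content of the statement to an injectivity analysis carried out via a basis. The map $\phi\colon A\otimes_C B\to A[B]$ given by $a\otimes b\mapsto ab$ is well-defined by the universal property of the tensor product, since multiplication $(a,b)\mapsto ab$ is $C$-bilinear; it is surjective by construction, because $A[B]$ is additively generated by products $ab$ with $a\in A$, $b\in B$. Hence the statement amounts to the equivalence: $\phi$ is injective if and only if $A\forkindep^l_C B$.

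For injectivity, I would fix a $C$-basis $\{e_j\}_{j\in J}$ of $B$. Since $B$ is free as a $C$-module, $A\otimes_C B$ is a free $A$-module on the family $\{1\otimes e_j\}_{j\in J}$, and in particular every element admits a unique representation $\sum_j a_j\otimes e_j$ with $a_j\in A$ (finitely many nonzero). Under $\phi$ such an element maps to $\sum_j a_je_j\in A[B]$. Consequently $\ker\phi=0$ if and only if the $C$-basis $\{e_j\}_{j\in J}$ of $B$ remains linearly independent over $A$, which is exactly the assertion that $B$ is linearly disjoint from $A$ over $C$ in the sense of the definition (applied with the roles of $A$ and $B$ swapped).

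It then remains to match this with the stated definition of $A\forkindep^l_C B$, which is phrased in terms of $C$-linearly independent tuples from $A$ staying independent over $B$. I would invoke the symmetry of linear disjointness, which is the one mildly non-routine point and is in fact immediate from the canonical isomorphism $A\otimes_C B\cong B\otimes_C A$: re-running the basis argument with the two factors swapped shows that the conditions ``every $C$-independent family in $A$ is $B$-independent'' and ``every $C$-independent family in $B$ is $A$-independent'' are each equivalent to injectivity of $\phi$, and so to each other. This closes the equivalence in both directions; the rest of the proof reduces to the standard unique-representation property of tensor products over a free module, which requires no further work.
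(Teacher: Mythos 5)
The paper states this as a citation to Morandi's textbook (Proposition~20.2) and gives no internal proof, so there is nothing in the paper to compare against; what you have written is the standard textbook argument, and it is correct. Two small remarks. First, the step asserting that ``$\{e_j\}_{j\in J}$ is $A$-independent'' is \emph{exactly} the assertion of linear disjointness elides a short justification: a priori you have only tested one particular $C$-basis, whereas the definition quantifies over all finite $C$-independent tuples in $B$. The bridge is routine linear algebra --- any finite $C$-independent tuple spans a finite-dimensional $C$-subspace, can be completed to a $C$-basis of that subspace using finitely many of the $e_j$, and the resulting change-of-basis matrix is invertible over $C$, hence over $A$, so $A$-independence of the $e_j$ transfers --- but it is worth saying. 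Second, the detour through symmetry of linear disjointness is avoidable: if you instead fix a $C$-basis $\{f_i\}_{i\in I}$ of $A$ and regard $A\otimes_C B$ as a free $B$-module on $\{f_i\otimes 1\}_{i\in I}$, then the same unique-representation argument shows directly that $\ker\phi=0$ iff $\{f_i\}$ stays $B$-independent, which is the stated condition $A\forkindep^l_C B$ without any appeal to symmetry. As a bonus, combining the two dual runs of the argument is itself a clean proof of the symmetry of $\forkindep^l$, which the paper separately cites to Fried--Jarden in \cref{linear disjointedness properties}.
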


\begin{fact} \label{linear disjointedness properties}
  The following is a list of useful model theoretic properties that $\forkindep^l$ has inside $\ACF$.
  Let $A$, $B$, $C$, $D$, $A'$, $B'$ and $C'$ be fields with $C\subseteq A\cap B$, $C'\subseteq A'\cap B'$ and $B\subseteq D$.
  \begin{itemize}
    \item (Invariance) if $ABC\equiv A'B'C'$ and $A\forkindep^l_C B$, then $A'\forkindep^l_{C'} B'$.
    \item (Monotonicity) if $A\forkindep^l_C D$, then $A\forkindep^l_C B$.
    \item (Base monotonicity) if $A\forkindep^l_C D$, then $A.B\forkindep^l_{B} D$.
    \item (Transitivity) if $A\forkindep^l_{C} B$ and $A.B\forkindep^l_{B} D$, then  $A\forkindep^l_C D$.
    \item (Symmetry) if $A\forkindep^l_C B$, then $B\forkindep^l_C A$.
    \item (Stationarity) if $A\equiv_C A'$ and $A\forkindep^l_C B$, $A'\forkindep^l_C B$, then $A\equiv_{B} A'$.
    \item (Local character) for a finite tuple $a$, there exists a countable subfield $B_0\subseteq B$, such that $B_0(a)\forkindep^l_{B_0} B$.
  \end{itemize}
\end{fact}

\begin{proof}
  Invariance is trivial.
  Proofs for monotonicity, base monotonicity and transitivity can be found in  \cite[Lemma 2.5.3]{fried2008field}, symmetry is proven in \cite[Lemma 2.5.1]{fried2008field}.
  Stationarity follows directly from \cref{linear disjointedness tensor products} and quantifier elimination in ACF.
  
  Local character follows from \cite[Theorem III.7, Proposition III.6 and Theorem III.8]{lang2019introduction}, by setting $B_0$ to be the field of definition of the locus of $a$ over $B$.
  This gives an even stronger result, as $B_0$ is finitely generated and not merely countable.
  For a more direct proof of local character, see \cref{shorter local character}.
\end{proof}

\begin{corollary} \label{isomorphism amalgamation}
  Let $A_0$, $B_0$, $C_0$, $A_1$, $B_1$ and $C_1$ be fields with $C_0\subseteq A_0\cap B_0$, $C_1\subseteq A_1\cap B_1$, such that $A_0\forkindep^l_{C_0} B_0$, $A_1\forkindep^l_{C_1} B_1$.
  Suppose there are isomorphism $f:A_0\to A_1$, $g:B_0\to B_1$ such that $f|_{C_0}=g|_{C_0}$.
  Then there is a unique isomorphism $F:A_0.B_0\to A_1.B_1$ such that $F|_{A_0}=f$, $F|_{B_0}=g$.
\end{corollary}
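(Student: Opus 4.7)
The plan is to construct $F$ in two stages: first as a ring isomorphism between the coordinate rings $A_0[B_0]$ and $A_1[B_1]$ via the tensor product characterization of linear disjointness (\cref{linear disjointedness tensor products}), then by extending to fraction fields.

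For the first stage, I would exploit that $f|_{C_0}=g|_{C_0}$ to see that $f(C_0)=g(C_0)\subseteq A_1\cap B_1$; passing if needed to this common subfield in place of $C_1$ (using base monotonicity from \cref{linear disjointedness properties} together with $A_1\forkindep^l_{C_1}B_1$), I may assume $f$ and $g$ are $C_0$-algebra isomorphisms onto $A_1$ and $B_1$ respectively. The universal property of the tensor product then yields a ring isomorphism $f\otimes g\colon A_0\otimes_{C_0}B_0\to A_1\otimes_{C_1}B_1$ sending $a\otimes b\mapsto f(a)\otimes g(b)$, with inverse induced by $f^{-1}$ and $g^{-1}$. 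Composing with the multiplication isomorphisms $A_i\otimes_{C_i}B_i\cong A_i[B_i]$ supplied by \cref{linear disjointedness tensor products}, I obtain a ring isomorphism $\tilde F\colon A_0[B_0]\to A_1[B_1]$ that restricts to $f$ on $A_0$ and to $g$ on $B_0$.

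For the second stage, I would use that $A_i[B_i]$ is an integral domain, since by the standing convention everything sits inside a large algebraically closed field. Hence $\tilde F$ extends uniquely to an isomorphism of fraction fields $F\colon \fr(A_0[B_0])\to \fr(A_1[B_1])$, which is precisely an isomorphism $F\colon A_0.B_0\to A_1.B_1$ with the required restriction properties. Uniqueness of $F$ as an extension of $f$ and $g$ is then automatic: any such $F$ is determined on $A_0\cup B_0$, hence on the ring they generate by ring operations, and thence on its fraction field.

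The only mild obstacle is the base-field bookkeeping in the first stage, namely arranging that $C_1$ coincides with the image $f(C_0)=g(C_0)$ so that the tensor-product formalism applies cleanly; after that, the whole construction is a formal consequence of \cref{linear disjointedness tensor products}.
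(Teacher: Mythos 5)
Your argument is correct, and it takes a genuinely more algebraic route than the paper's. The paper extends $g$ to an automorphism $\sigma$ of the ambient algebraically closed field, applies invariance to get $\sigma(A_0)\forkindep^l_{C_1}B_1$, then invokes stationarity to find $\tau$ fixing $B_1$ pointwise with $\tau(\sigma(A_0))=A_1$, and sets $F=(\tau\circ\sigma)|_{A_0.B_0}$. You bypass the ambient-automorphism machinery entirely: you form $f\otimes g$, transport it through the multiplication isomorphisms of \cref{linear disjointedness tensor products} to get a ring isomorphism $A_0[B_0]\to A_1[B_1]$, and extend to fraction fields. This is essentially the same underlying computation the paper performs when proving stationarity in \cref{linear disjointedness properties}, but carried out in a form strong enough to handle $g\ne\mathrm{id}$, so you never need stationarity or quantifier elimination for $\ACF$ as black boxes; the argument is self-contained and purely field-theoretic, at the small cost of explicit coordinate-ring bookkeeping and the fraction-field extension step. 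Two remarks. First, your base-change step replacing $C_1$ by $f(C_0)$ via base monotonicity only goes through when $C_1\subseteq f(C_0)$; in fact both your proof and the paper's tacitly require $C_1=f(C_0)=g(C_0)$ (invariance in the paper's proof actually yields $\sigma(A_0)\forkindep^l_{g(C_0)}B_1$, which matches the stated $\forkindep^l_{C_1}$ only under that identification), so this is a defect of the statement rather than of either proof. Second, you explicitly dispatch the uniqueness clause, which the paper's proof leaves implicit.
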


\begin{proof}
  Consider $A_0$, $A_1$, $B_0$ and $B_1$ as tuples, such that $f$ and $g$ match the tuples.
  Extend $g$ to an automorphism $\sigma$ arbitrarily.
  From invariance, by applying $\sigma$ to $A_0\forkindep^l_{C_0} B_0$, we get $\sigma(A_0)\forkindep^l_{C_1} B_1$.
  From stationarity $\sigma(A_0)\equiv_{B_1} A_1$, let $\tau$ be an automorphism witnessing the equivalence.
  Let $F=(\tau\circ \sigma)|_{A_0.B_0}$, we have $F(A_0)=\tau(\sigma(A_0))=A_1$ and $F(B_0)=\tau(\sigma(B_0))=\tau(B_1)=B_1$ as tuples.
  In particular, $F:A_0.B_0\to A_1.B_1$ is an isomorphism, and from the way we chose the tuples $F|_{A_0}=f$ and $F|_{B_0}=g$.
\end{proof}

\begin{definition}
  A field extension $A\subseteq B$ is called:
  \begin{itemize}
      \item \emph{regular} if $\overline{A}\forkindep^l_A B$,
      \item \emph{separable} if $A^{1/p}\forkindep^l_A B$, where $p=\characteristic(A)>0$ and $A^{1/p}$ is the field of $p$-th roots of all elements in $A$ (if $\characteristic(A)=0$, then all extensions are separable), and
      \item \emph{relatively algebraically closed} if $\overline{A}\cap B=A$.
  \end{itemize}
\end{definition}

\begin{fact} \label{fact regular}
    Suppose $A\subseteq B$ is a field extension.
    \begin{enumerate}
        \item \cite[Lemma 2.6.4]{fried2008field} The extension $A\subseteq B$ is regular iff it is separable and relatively algebraically closed.
        \item \cite[Lemma 2.6.7]{fried2008field} If the extension $A\subseteq B$ is regular and $C$ is a field extending $A$ such that $B\forkindep^\ACF_A C$, then $B\forkindep^l_A C$.
    \end{enumerate}
\end{fact}

\begin{lemma} \label{regular extension isomorphism}
  If $A\subseteq B$ is a regular field extension and $\sigma:B\to B'$ is an isomorphism of fields,
  then $\sigma(A)\subseteq B'$ is regular.
\end{lemma}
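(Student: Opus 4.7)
The plan is to use the invariance of linear disjointness under isomorphism (or equivalently, under automorphisms of the ambient monster model of $\ACF$). The statement is essentially a sanity check: regularity is a purely field-theoretic notion, so isomorphisms should preserve it, and the work is just to check that the definition given here is indeed of this nature.

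More concretely, I would first extend $\sigma$ to a suitable automorphism. Since the lemma takes place inside the fixed monster model $\M$ of $\ACF$ (as specified in the opening notation), and $\sigma\colon B\to B'$ is an elementary map in $\ACF$ by quantifier elimination, it extends to an automorphism $\Sigma\in\Aut(\M)$. Because automorphisms of an algebraically closed field preserve the relation ``algebraic over'', we have $\Sigma(\overline{A})=\overline{\Sigma(A)}=\overline{\sigma(A)}$, and of course $\Sigma(A)=\sigma(A)$ and $\Sigma(B)=B'$.

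Now I would apply the hypothesis together with the invariance clause of \cref{linear disjointedness properties}. By assumption $\overline{A}\forkindep^l_{A} B$, and since the triple $(\overline{A},A,B)$ is mapped by $\Sigma$ to the triple $(\overline{\sigma(A)},\sigma(A),B')$, invariance gives $\overline{\sigma(A)}\forkindep^l_{\sigma(A)} B'$. This is exactly the statement that the extension $\sigma(A)\subseteq B'$ is regular.

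There is essentially no obstacle: the only thing to verify is that $\Sigma$ indeed sends the algebraic closure of $A$ (inside $\M$) to the algebraic closure of $\sigma(A)$, which is immediate since field automorphisms carry roots of polynomials to roots of the image polynomials. As an alternative one could argue directly from the equivalent characterization of regularity (separable and relatively algebraically closed), since both properties are manifestly preserved under field isomorphisms; but the invariance-based argument above is shorter and fits best with the formalism already set up in the paper.
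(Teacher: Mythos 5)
Your proof is correct and uses the same core idea as the paper: extend $\sigma$ (the paper extends it to an isomorphism $\overline{B}\to\overline{B'}$, you to a monster-model automorphism, which is a cosmetic difference) and then apply the invariance clause of \cref{linear disjointedness properties} to the triple $(\overline{A},A,B)$, noting that the algebraic closure is carried to the algebraic closure.
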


\begin{proof}
  We can extend $\sigma$ to the algebraic closure, $\tilde{\sigma}: \overline{B}\to \overline{B'}$.
  From $\overline{A}\forkindep^l_A B$ we get by invariance $\tilde{\sigma}(\overline{A})\forkindep^l_{\sigma(A)} B'$.
  But $\tilde{\sigma}(\overline{A})=\overline{\sigma(A)}$, so we have $\overline{\sigma(A)}\forkindep^l_{\sigma(A)} B'$ as needed.
\end{proof}

\begin{lemma} \label{regular algebraically independent}
  If $A\subseteq B$ is a regular field extension and $S$ is a set algebraically independent over $B$, then $\overline{A(S)}\forkindep^l_A B$.
\end{lemma}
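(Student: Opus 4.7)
The plan is to combine the regularity hypothesis $\overline{A} \forkindep^l_A B$ with two consequences of the algebraic independence of $S$, applying transitivity and symmetry of $\forkindep^l$ (from \cref{linear disjointedness properties}) to build up to the conclusion.

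The first stage is to establish $\overline{A}(S) \forkindep^l_A B$. Since $\overline{A}.B / B$ is algebraic, the algebraic independence of $S$ over $B$ yields algebraic independence of $S$ over $\overline{A}.B$. In particular, monomials in $S$ (with $\overline{A}$-coefficients) are linearly independent over $\overline{A}.B$, so $\overline{A}(S)$ and $\overline{A}.B$ are linearly disjoint over $\overline{A}$ (passing to fraction fields via \cref{linear disjointedness tensor products}). Combining this (symmetrized) with the regularity $B \forkindep^l_A \overline{A}$ through transitivity with intermediate $\overline{A}$ then yields $B \forkindep^l_A \overline{A}(S)$.

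The second stage is to show that $\overline{A}(S) \subseteq \overline{A}(S).B$ is a regular extension. This follows from the standard fact that regularity is preserved under linearly disjoint base change: if $F \subseteq K$ is regular and $L \forkindep^l_F K$, then $L \subseteq L.K$ is regular; I apply this with $F = A$, $K = B$, $L = \overline{A}(S)$. Thus $\overline{A(S)} = \overline{\overline{A}(S)} \forkindep^l_{\overline{A}(S)} \overline{A}(S).B$. A second transitivity step (with intermediate $\overline{A}(S)$) combines this with the first stage to yield $B \forkindep^l_A \overline{A(S)}$, whence $\overline{A(S)} \forkindep^l_A B$ by symmetry.

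The main obstacle is the base-change preservation of regularity used in the second stage, a standard result from field arithmetic (cf.\ \cite{fried2008field}) which is not explicitly developed in the excerpt. An alternative route would bypass this by directly proving $\overline{A(S)} \forkindep^l_{\overline{A}} \overline{A}.B$, exploiting that over the algebraically closed base $\overline{A}$ linear disjointness coincides with algebraic independence (so one only needs to verify $\trdeg(\overline{A(S)}/\overline{A}.B) = |S| = \trdeg(\overline{A(S)}/\overline{A})$), and then combining with the regularity of $A \subseteq B$ in a single transitivity step.
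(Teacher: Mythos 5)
Your proposal is correct, but the main route you give is a genuinely different (and longer) path than the paper's. The paper establishes $\overline{A(S)}\forkindep^\ACF_A B$ by the transcendence-degree observation that also underlies your alternative, and then cites \cite[Lemma~2.6.7]{fried2008field}: when $B/A$ is regular, algebraic disjointness from $B$ over $A$ already yields linear disjointness over $A$. That single citation finishes the proof. Your main route instead goes through the intermediate field $\overline{A}(S)$: the monomial argument plus a first transitivity step give $B\forkindep^l_A\overline{A}(S)$, and you then invoke base-change preservation of regularity to conclude that $\overline{A}(S)\subseteq\overline{A}(S).B$ is regular, finishing with a second transitivity step. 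This is sound, and the base-change lemma is indeed available in \cite{fried2008field}, but it is a heavier and different tool than the one the paper cites. Your alternative route is essentially the paper's argument unrolled: it uses the algebraically-closed-base case of the same ``regular plus algebraically disjoint implies linearly disjoint'' fact (every extension of $\overline{A}$ is regular, so linear and algebraic disjointness coincide over $\overline{A}$) together with a single transitivity step against the hypothesis $\overline{A}\forkindep^l_A B$; it is both cleaner and much closer in spirit to the paper's two-line proof.
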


\begin{proof}
  As $S$ is algebraically independent over $B$, we have $\overline{A(S)} \forkindep^\ACF_A B$.
  By \cref{fact regular}(2), $\overline{A(S)}\forkindep^l_A B$.
\end{proof}

\subsection{Language of regular extensions}
In \cite{MACINTYRE2008}, Macintyre defines relations in the language of rings that are preserved in a field extension iff it is regular. 
We will present those relations, and use them to expand a theory of fields\footnote{By a \emph{theory of fields}, we mean a theory in a language expanding the language of rings which contains all the fields axioms.} in such a way that the models are the same but for any two models $M,N$, $N$ extends $M$ iff it is a regular field extension.

\begin{fact}[{\cite[\S 4.7]{MACINTYRE2008}}] \label{RAC and separability predicates}
  Let $A\subseteq B$ be a field extension.
  \begin{enumerate}
    \item The extension is relatively algebraically closed iff it preserves the relations $\mathrm{Sol}_n(x_0,\dots ,x_{n-1})=\exists y(x_0+x_1y+\dots +x_{n-1}y^{n-1}+y^n=0)$ for $n\ge 1$.
    \item For $p=\characteristic(A)$, the extension is separable iff it preserves the relations $D_{n,p}(x_0,\dots ,x_{n-1})=\exists y_0,\dots ,y_{n-1}(y_0^px_0+\dots +y_{n-1}^px_{n-1}=0)$ for $n\ge 1$ (note that if $p=0$, $D_{n,p}$ is quantifier-free definable).
  \end{enumerate}
\end{fact}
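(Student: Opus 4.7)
I would treat the two parts as separate equivalences. For (1), the forward direction is immediate: if $A\subseteq B$ is relatively algebraically closed and $b\in B$ witnesses $\mathrm{Sol}_n(a_0,\dots,a_{n-1})$ in $B$ for some $a_i\in A$, then $b$ is a root of a monic polynomial of degree $n$ over $A$, so $b\in A$ and the same tuple witnesses $\mathrm{Sol}_n$ in $A$. Conversely, assume every $\mathrm{Sol}_n$ is preserved, take $b\in B$ algebraic over $A$, and let $f(y)=y^n+a_{n-1}y^{n-1}+\cdots+a_0\in A[y]$ be its minimal polynomial. Then $\mathrm{Sol}_n(a_0,\dots,a_{n-1})$ holds in $B$ (witnessed by $b$), hence by preservation in $A$, so some $c\in A$ satisfies $f(c)=0$. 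Irreducibility of $f$ over $A$ then forces $\deg f=1$, whence $b\in A$.

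For (2), the substantive input is MacLane's criterion: in characteristic $p>0$, the extension $A\subseteq B$ is separable iff the subfields $A$ and $B^p$ of $B$ are linearly disjoint over $A^p$, equivalently iff every finite tuple in $A$ that is $A^p$-linearly independent remains $B^p$-linearly independent. Rewriting a $B^p$-linear combination $\sum c_i a_i=0$ as $\sum y_i^p a_i=0$ with $y_i\in B$ displays a non-trivial $B^p$-dependence of the $a_i$ as an instance of $D_{n,p}$. Preservation of the $D_{n,p}$ from $B$ to $A$ then amounts exactly to the MacLane criterion applied to arbitrary finite tuples from $A$.

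The one point that needs care is that $D_{n,p}$ as displayed is satisfied trivially in any field by taking all $y_i=0$, so to capture \emph{non-trivial} $B^p$-linear dependence one should read the fact as referring to the family of formulas obtained by fixing some $y_i$ to $1$ (equivalently, requiring at least one $y_i\neq 0$ and clearing denominators). Once this normalisation is in place, both directions in (2) are just rewriting, and the only real input is MacLane's theorem; the main obstacle is being precise about the formulation of the predicate so that it actually encodes non-trivial dependence.
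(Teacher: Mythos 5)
The paper states this as a cited Fact from Macintyre and supplies no proof of its own, so there is no in-paper argument to compare your work against; judged on its own, your proof is correct. Part (1) is the standard argument: preservation of $\mathrm{Sol}_n$ applied to the coefficient tuple of the (monic) minimal polynomial of $b\in B$ over $A$ produces a root in $A$, and irreducibility then forces degree one, so $b\in A$. Part (2) correctly identifies MacLane's criterion as the essential input: applying Frobenius to the usual statement (``$B$ and $A^{1/p}$ are linearly disjoint over $A$'') yields your formulation (``$A$ and $B^p$ are linearly disjoint over $A^p$''), and preservation of the $D_{n,p}$-family in the nonautomatic direction ($B$ to $A$; the other direction is free since $A^p\subseteq B^p$) is exactly this linear disjointness applied to arbitrary finite tuples in $A$.

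Your remark about the predicate $D_{n,p}$ is a genuine defect in the formula as printed and worth flagging: with all $y_i=0$ permitted, $D_{n,p}$ holds of every tuple in every field and is thus preserved by every extension, which would make the equivalence in (2) false. The corrected reading --- either conjoining $\bigvee_i y_i\neq 0$, or replacing $D_{n,p}$ by the $n$ formulas obtained by setting each $y_j=1$ in turn --- encodes $B^p$-linear dependence properly, and that is what Macintyre must intend. Under this normalization both directions of (2) are, as you say, just rewriting around MacLane's theorem, so the proof is complete.
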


\begin{corollary} \label{elementary extension is regular}
  Suppose $M$ and $N$ are fields.
  If $M\preceq N$, then $M\subseteq N$ is a regular extension.
\end{corollary}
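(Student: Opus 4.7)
The plan is to combine the equivalent characterization of regular extensions given in the definition (separable plus relatively algebraically closed) with the syntactic description of these two properties provided by \cref{RAC and separability predicates}. Since the predicates $\mathrm{Sol}_n$ and $D_{n,p}$ are both defined by existential formulas in the language of rings, elementarity of the extension $M \prec N$ (which is asserted in a language expanding the language of rings, so in particular implies elementarity in the language of rings) will force these predicates to be preserved, and hence the extension to be regular.

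Concretely, I would first observe that for any $\bar{a} \in M$, the formulas $\mathrm{Sol}_n(\bar{a})$ and $D_{n,p}(\bar{a})$ are existential in the language of rings; thus by $M \prec N$ they hold in $M$ if and only if they hold in $N$. Applying part (1) of \cref{RAC and separability predicates}, preservation of all $\mathrm{Sol}_n$ gives that $M$ is relatively algebraically closed in $N$. If $\characteristic(M) = p > 0$, then applying part (2) gives that $M \subseteq N$ is separable; if $\characteristic(M) = 0$, every field extension is automatically separable. Combining, $M \subseteq N$ is separable and relatively algebraically closed, hence regular.

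The only minor subtlety to flag is that the elementarity hypothesis is in a language extending that of rings, but this obviously entails elementarity in the smaller language of rings, which is all that the existential preservation argument requires. There is no real obstacle here; the corollary is essentially a direct combination of the definition of regular extension with Macintyre's syntactic characterization from \cref{RAC and separability predicates}.
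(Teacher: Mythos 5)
Your proof is correct and takes essentially the same approach as the paper's: use $M\prec N$ (in a language expanding that of rings) to conclude the predicates $\mathrm{Sol}_n$ and $D_{n,p}$ are preserved, then apply \cref{RAC and separability predicates} to get that the extension is relatively algebraically closed and separable, hence regular. The observation that these formulas are existential is harmless but unnecessary here, since elementarity preserves arbitrary formulas, not just existential ones.
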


\begin{proof}
  The fact that $M\preceq N$ implies in particular that $M\subseteq N$ is a field extension that preserves $\mathrm{Sol}_n$ and $D_{n,p}$ ($p=\characteristic(A)$).
  By \cref{RAC and separability predicates} the extension $M\subseteq N$ is relatively algebraically closed and separable, so by \cref{fact regular}(1) it is a regular extension.
\end{proof}

\begin{definition} \label{language of regular extensions}
  Let $T$ be a theory of fields in a language $L$ expanding the language of rings.
  Define $L_{\mathrm{reg}}=L\cup\set{\mathrm{Sol}_n}_{n\ge 1}\cup\set{\tilde{D}_{n,p}}_{n\ge1,p\in\mathrm{Primes}\cup\{0\}}$, where $\mathrm{Sol}_n$, $\tilde{D}_{n,p}$ are $n$-ary relations, and extend $T$ to $T_{\mathrm{reg}}$ in $L_{\mathrm{reg}}$ by defining $\mathrm{Sol_n}$ as above and defining
  $$\tilde{D}_{n,p}=D_{n,p}\land (\underbrace{1+\dots +1}_p=0).$$
\end{definition}

\begin{lemma} \label{substructure iff regular}
  Let $T$ be a theory of fields and let $Q,R\vDash T$ with $Q\subseteq R$ a substructure.
  By adding definable relations, $Q$ and $R$ can be expanded to models of $T_{\mathrm{reg}}$.
  Then $Q$ is an $L_{\mathrm{reg}}$-substructure of $R$ iff $Q\subseteq R$ is a regular field extension.
\end{lemma}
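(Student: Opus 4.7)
The plan is to unpack the definition of being an $L_{\mathrm{reg}}$-substructure and match it up with the characterization of regular extensions coming from \cref{RAC and separability predicates}.

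First, I would note that since $Q \subseteq R$ is already a substructure in $L$ (so in particular a subring, hence a subfield after applying $\mathrm{Frac}$ — though here it is literally a subfield since both are fields), being an $L_{\mathrm{reg}}$-substructure is equivalent to asking that for every tuple $\bar a$ from $Q$, each added predicate $\mathrm{Sol}_n$ and $\tilde{D}_{n,p}$ holds of $\bar a$ in $Q$ if and only if it holds in $R$. Since these are defined by existential formulas in $L$ (possibly conjoined with the characteristic condition), the ``only if'' direction is automatic; what matters is the upward preservation: whenever $R \models \mathrm{Sol}_n(\bar a)$ or $R \models \tilde{D}_{n,p}(\bar a)$ with $\bar a$ from $Q$, the same must hold in $Q$.

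Next, I would split along the two families of predicates. For the $\mathrm{Sol}_n$ family, upward preservation for all $n$ is, by the first clause of \cref{RAC and separability predicates}, exactly the statement that $Q$ is relatively algebraically closed in $R$. For the $\tilde{D}_{n,p}$ family, I would observe that $Q$ and $R$ have the same characteristic (as $Q \subseteq R$ is a field extension), call it $c$. If $c = 0$, then $\tilde{D}_{n,p}$ is vacuously false for every prime $p$ in both $Q$ and $R$, so preservation is automatic; this matches the fact that every characteristic $0$ extension is separable, so the separability half is free. If $c = p > 0$, then $\tilde{D}_{n,q}$ is vacuously false for any prime $q \neq p$, and $\tilde{D}_{n,p}$ reduces to $D_{n,p}$; hence preservation of $\tilde{D}_{n,p}$ for all $n$ is, by the second clause of \cref{RAC and separability predicates}, exactly separability of the extension.

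Combining the two halves, being an $L_{\mathrm{reg}}$-substructure is equivalent to the extension being both relatively algebraically closed and separable, which by \cref{linear disjointedness properties} and the definition preceding it (the alternative characterization from \cite[Lemma 2.6.4]{fried2008field} cited after the definition of regular extension) is exactly regularity. The main (minor) subtlety is handling the characteristic gadget inside $\tilde{D}_{n,p}$ correctly — in particular making sure that in characteristic $0$ nothing is required, and that in characteristic $p$ only the $\tilde{D}_{n,p}$ for the correct $p$ carry content — but this is bookkeeping rather than a real obstacle.
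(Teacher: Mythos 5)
Your proposal is correct and follows essentially the same argument as the paper: apply \cref{RAC and separability predicates} to decode preservation of $\mathrm{Sol}_n$ as relative algebraic closedness and preservation of $\tilde{D}_{n,p}$ as separability, handling the characteristic gadget inside $\tilde{D}_{n,p}$ so that only the relevant $p$ carries content. The only difference is organizational (you split by predicate family first, the paper splits by characteristic first), plus you add the harmless extra observation that the direction from $Q$ to $R$ is automatic since the predicates are existential; note though that you label the substantive direction ``upward preservation'' while describing it as passing from $R$ down to $Q$, which is a small terminological slip.
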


\begin{proof}
  Let $p=\characteristic(Q)$.
  Note that by Facts \ref{fact regular} and \ref{RAC and separability predicates}, it is enough to prove that $Q$ is an $L_{\mathrm{reg}}$-substructure of $R$ iff the extension $Q\subseteq R$ preserves $\mathrm{Sol}_n$ and $D_{n,p}$ for all $n$.
  Indeed, this equivalence holds because $\tilde{D}_{n,p}$ is equivalent to $D_{n,p}$ and $\tilde{D}_{n,q}$ is trivially false for any prime $q\ne p$.
\end{proof}

\subsection{NSOP$_1$}
In this subsection we will review the definition and basic properties of NSOP$_1$ theories.

We will work in a monster model $\M$ (large, saturated) of a complete theory $T$.

\begin{definition}
    A formula $\phi(x;y)$ has SOP$_1$ if there is a tree of tuples $(b_\eta)_{\eta\in 2^{<\omega}}$ such that
    \begin{itemize}
       \item for all $\eta\in 2^\omega$, $\set{\phi(x;b_{\eta|\alpha})\mid \alpha<\omega}$ is consistent,
       \item for all $\eta\in 2^{<\omega}$, if $\nu\unrhd\eta\frown \vect{0}$, then $\set{\phi(x;b_\nu),\phi(b;a_{\eta\frown\vect{1}}}$ is inconsistent.
    \end{itemize}
    We say that a theory $T$ is SOP$_1$ if some formula has SOP$_1$ modulo $T$.
    Otherwise, $T$ is NSOP$_1$.
\end{definition}

\begin{definition}
  Let $A$ be a set and $a$ and $b$ tuples, say that $a$ is \emph{coheir independent} of $b$ over $A$ if the type $\tp(a/Ab)$ is finitely satisfiable in $A$, and denote $a\forkindep^u_A b$.
  A sequence $(a_i)_{i\in I}$ is an $A$-indiscernible coheir sequence if it is $A$-indiscernible and $a_i\forkindep^u_A a_{<i}$
\end{definition}

Using coheir-independence, we can use a different criterion for NSOP$_1$, due to \cite[Theorem 5.7]{Chernikov2016}.

 \begin{fact}[Weak independent amalgamation]\label{WIA}
    The theory $T$ is NSOP$_1$ iff given any model $M\vDash T$ and tuples $a_0b_0\equiv_M a_1b_1$ such that $b_1\forkindep^u_M b_0$ and $b_i\forkindep^u_M a_i$ for $i=0,1$, there exists $a$ such that $ab_0\equiv_M ab_1\equiv_M a_0b_0$.
\end{fact}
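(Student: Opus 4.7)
The plan is to prove both directions separately, with the reverse direction being more amenable to a direct tree-construction argument and the forward direction requiring more substantial NSOP$_1$-machinery.

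For the reverse direction (failure of WIA implies SOP$_1$), I would argue by contrapositive: assuming a formula $\phi(x;y)$ has SOP$_1$ with witnessing tree $(b_\eta)_{\eta \in 2^{<\omega}}$, I would first extract a strongly indiscernible tree with the same SOP$_1$ properties using the tree-modeling theorem and finite satisfiability in a small model $M$ containing the root. Taking two incomparable immediate successors $b_0 := b_{\langle 0 \rangle}$ and $b_1 := b_{\langle 1 \rangle}$ of the root, the tree structure combined with finite satisfiability guarantees that $b_1 \forkindep^u_M b_0$. Realizing the consistent branches, pick $a_i$ with $\{\phi(x;b_\nu) : \nu \trianglelefteq \langle i \rangle\}$, chosen so that $a_i \forkindep^u_M b_i$. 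Strong indiscernibility of the tree would then give $a_0 b_0 \equiv_M a_1 b_1$, while the SOP$_1$ inconsistency condition (that $\phi(x;b_{\langle 0\rangle\frown 0\frown \ldots})$ and $\phi(x;b_{\langle 1\rangle})$ cannot be jointly satisfied) would block the amalgamation, contradicting WIA.

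For the forward direction (NSOP$_1$ implies WIA), I would start from the given configuration $a_0b_0 \equiv_M a_1b_1$ with $b_1 \forkindep^u_M b_0$ and $b_i \forkindep^u_M a_i$, and extend $(b_0,b_1)$ to an infinite $M$-indiscernible coheir sequence $(b_i)_{i<\omega}$ via iterative finite-satisfiability choices followed by Ramsey extraction. For each $i$, pick $a_i$ so that $a_i b_i \equiv_M a_0 b_0$ and $a_i \forkindep^u_M b_i$. The strategy is then to show, using NSOP$_1$, that a common realization $a$ with $a b_i \equiv_M a_0 b_0$ for infinitely many $i$ must exist; the failure of this would allow extraction of a tree of parameters and a formula satisfying the two SOP$_1$-conditions, by compactness. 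Concretely, from a formula $\varphi(x;y) \in \tp(a_0/Mb_0)$ for which no $a$ realizes $\{\varphi(x; b_i)\}$ while matching types, one mimics the standard construction of a tree indexed by $2^{<\omega}$ such that paths are consistent (via the coheir sequence) while incomparable extensions are inconsistent (via the failure of amalgamation).

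The main obstacle is the forward direction, where the translation from ``no common $a$'' to ``SOP$_1$-tree'' demands a careful induction along the indiscernible coheir sequence, and the construction of the tree with both the consistency-along-branches and the inconsistency-at-incomparable-extensions properties. This is precisely the heart of the Chernikov--Ramsey argument: the interplay between coheir-indiscernibility (which supplies path-consistency by compactness and finite satisfiability) and the amalgamation failure (which must be iterated to produce inconsistency between siblings, and then propagated up the tree). A clean implementation would likely proceed by first proving a more symmetric-looking ``coheir independence theorem'' modulo NSOP$_1$ and deriving WIA as a consequence.
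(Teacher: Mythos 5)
The paper does not prove this statement: it is cited verbatim as Theorem~5.7 of Chernikov--Ramsey \cite{Chernikov2016}, so there is no internal proof to compare your attempt against. Taking your sketch on its own merits, the overall two-direction shape is right, but there are issues worth flagging. First, the labels are scrambled: your ``reverse direction'' is announced as ``failure of WIA implies SOP$_1$'' (the contrapositive of NSOP$_1\Rightarrow$WIA) but what you actually sketch is ``SOP$_1$ implies failure of WIA'' (the contrapositive of WIA$\Rightarrow$NSOP$_1$); the argument you give is for a legitimate direction, it is just not the one the parenthetical names, and ``argue by contrapositive'' further muddies which implication is being established.

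Second, and more substantively, in both directions you propose arranging $a_i\forkindep^u_M b_i$, but the statement requires $b_i\forkindep^u_M a_i$, i.e.\ $\tp(b_i/Ma_i)$ finitely satisfiable in $M$. Coheir independence is not symmetric, and the paper's own application (in the proof of \cref{nsop1}) relies on the $B_i\forkindep^u_M A_i$ form precisely to pass through \cref{coheir independence implies} in that orientation; a counterexample built to satisfy $a_i\forkindep^u_M b_i$ would not automatically refute WIA as stated. In the SOP$_1$-tree construction this is not a cosmetic point: you have to arrange the tree and the branch realizations so that $\tp(b_{\langle i\rangle}/Ma_i)$ is a coheir, which takes more care than choosing $a_i$ generically over $b_i$. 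Finally, the hard direction (NSOP$_1$ implies WIA) is left as a gesture toward ``mimicking the standard construction''; that is understandable for a sketch, but it means the proposal is an outline of the Chernikov--Ramsey strategy rather than a self-contained proof, and in any case it is not something the present paper undertakes to supply.
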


Kim-dividing, and its extension Kim-forking, were defined in \cite{Kaplan2017}, over arbitrary sets. 
For our purposes we will give a simplified definition, which we will call Kim$^u$-dividing, and define it only over models.
\begin{definition} \label{def kim u dividing}
    A formula $\phi(x,b)$ \emph{Kim$^u$-divides} over a model $M$ if there exists an $M$-indiscernible coheir sequence $(b_i)_{i<\omega}$ with $b\equiv_M b_i$, such that $\set{\phi(x,b_i)}_{i<\omega}$ is inconsistent.
    A formula \emph{Kim$^u$-forks} over $M$ if it implies a disjunction of Kim$^u$-dividing formulas over $M$.
    
    A type Kim$^u$-divides (Kim$^u$-forks) over $M$ if it implies a Kim$^u$-dividing (Kim$^u$-forking) formula over $M$.
    Denote $a\forkindep^K_M b$ when the type $\mathrm{tp}(a/Mb)$ does not Kim$^u$-fork over $M$.
\end{definition}

\begin{remarkcnt} \label{kimu equals kim}
In this definition, $(b_i)_{i<\omega}$ is a Morley sequence in a restriction of a global coheir type.
In the original definition of Kim-dividing, the global coheir type is replaced with a global invariant type.
By Kim's lemma for Kim-dividing \cite[Theorem 3.16]{Kaplan2017}, those definitions are equivalent for NSOP$_1$ theories.
\end{remarkcnt}

\begin{remarkcnt} \label{kimu dividing}
The type $\tp(a/Mb)$ does not Kim$^u$-divide over $M$ iff for every $M$-indiscernible coheir sequence $(b_i)_{i<\omega}$ with $b\equiv_M b_i$, there exists $a'$ such that $ab\equiv_M a'b_i$ for every $i<\omega$.
\end{remarkcnt}

\begin{fact}\label{kim independence in nsop1}
    Suppose $T$ is NSOP$_1$, then
    \begin{enumerate}
        \item \cite[Theorem 3.16]{Kaplan2017} If $\phi(x,b)$ Kim-divides over $M\vDash T$, then for \emph{every} $M$-indiscernible coheir sequence $(b_i)_{i<\omega}$ with $b\equiv_M b_i$, $\set{\phi(x,b_i)}_{i<\omega}$ is inconsistent.
        \item \cite[Proposition 3.19]{Kaplan2017} Kim-dividing is equivalent to Kim-forking over models.
        \item \cite[Theorem 5.16]{Kaplan2017} $\forkindep^K$ is symmetric over models.
        \item \cite[Corolary 5.17]{Kaplan2017} Let $M\vDash T$, $a\forkindep^K_M b\iff \acl(a)\forkindep^K_M b\iff a\forkindep^K_M \acl(b)$.
        \item \cite[Proposition 8.8]{Kaplan2017} $T$ is simple iff $\forkindep^K$ satisfies base monotonicity over models: if $M,N\vDash T$ and $M\subseteq N$, then $a\forkindep^K_M Nb$ implies $a\forkindep^K_N b$.
        \item \cite[Proposition 8.4]{Kaplan2017} $T$ is simple iff $\forkindep^K=\forkindep^f$ over models.
    \end{enumerate}
\end{fact}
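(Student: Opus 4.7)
The six items form the standard package of tools about Kim-independence in NSOP$_1$ theories (originally due to Kaplan-Ramsey). My plan is to replicate their strategy, handling (1) first as the master result and then deriving (2)-(4) from it before turning to the simplicity characterisations (5), (6).

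The crux is (1), Kim's lemma for coheir-dividing. I would argue by contraposition: suppose $(b_i)_{i<\omega}$ and $(b'_i)_{i<\omega}$ are two $M$-indiscernible coheir sequences with $b\equiv_M b_i\equiv_M b'_i$, that $\{\phi(x,b_i):i<\omega\}$ is inconsistent, while $\{\phi(x,b'_i):i<\omega\}$ is realised by some $a$. The plan is to build a tree $(c_\eta)_{\eta\in 2^{<\omega}}$ of tuples so that each path $\eta\frown \langle 0\rangle^\omega$ looks like a copy of $(b'_i)$ (so $\phi$ is consistent along it), while at each splitting node the right sibling $\eta\frown\langle 1\rangle$ begins a fresh copy of $(b_i)$ (so $\phi$ is inconsistent when one mixes an $\eta\frown\langle 0\rangle$-continuation with the $\eta\frown\langle 1\rangle$-initial block). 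Coheirness is preserved at every step by pushing the construction out along global finitely-satisfiable extensions. The resulting tree then witnesses SOP$_1$ for $\phi$, contradicting the assumption on $T$. Carrying out this induction while keeping the coheir condition on every branch is the technical heart of \cite{Kaplan2017} and is clearly the main obstacle.

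Granting (1), the remainder is comparatively short. For (2), if $\phi(x,b)$ implies $\bigvee_{j<n}\psi_j(x,c_j)$ with each $\psi_j$ Kim$^u$-dividing over $M$, I extract a single coheir Morley sequence in $\tp(bc_0\cdots c_{n-1}/M)$, apply (1) to each $\psi_j$ coordinate-wise, and conclude that $\phi$ Kim$^u$-divides along the $b$-projection by a Ramsey argument on a sufficiently long realisation. Symmetry (3) follows from weak independent amalgamation (\cref{WIA}): a putative failure produces a formula $\psi(y,a)$ Kim-dividing over $M$ together with a witnessing coheir sequence, which one then amalgamates with the non-Kim-dividing side to exhibit a realisation of $\tp(a/Mb)$ consistent with $\psi(y,a_i)$ along the whole sequence, a contradiction with (1). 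Item (4) is a routine consequence of (3) together with the transfer of $\forkindep^u$ under algebraic closure on either side.

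For the simplicity characterisations (5) and (6), simplicity always delivers the forward direction: in a simple theory coheir sequences are non-forking Morley sequences, hence Kim-dividing coincides with dividing, $\forkindep^K=\forkindep^f$, and base monotonicity is automatic. Conversely, assuming base monotonicity (respectively $\forkindep^K=\forkindep^f$) I would verify that $\forkindep^K$ satisfies the full list of properties characterising non-forking in a simple theory over models: invariance and monotonicity are trivial, symmetry is (3), the independence theorem supplies stable amalgamation, and local character plus extension follow by compactness from (1) and the hypothesis; an analogue of the Kim-Pillay theorem then concludes simplicity of $T$. Throughout the package the only genuine obstacle is the tree argument of (1); everything else is a structural consequence of Kim's lemma and weak independent amalgamation.
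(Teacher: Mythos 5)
The statement in question is presented in the paper as a \emph{Fact}: a black-box citation to Kaplan--Ramsey, with a precise reference within \cite{Kaplan2017} attached to each item. The paper offers no proof; these results are imported from the literature. So you are not reproducing the paper's argument — there is none — but rather sketching (a version of) the proofs from the original source.

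With that caveat, a few remarks on your sketch. Your account of item (1), Kim's lemma for Kim-dividing, has the right shape: build a binary tree from two putative witnessing coheir sequences, with the consistent paths patterned on the good sequence and each right-sibling seeding a fresh copy of the bad one, then read off SOP$_1$. That is indeed the heart of \cite{Kaplan2017}, and you are right that it is where the real work lies. Your claim for item (3) is where I would push back. You assert that symmetry "follows from weak independent amalgamation" (\cref{WIA}), but in Kaplan--Ramsey the proof of symmetry (their Theorem 5.16) is not a quick corollary of the 3-amalgamation criterion; it proceeds through a chain condition for Kim-independence together with further tree constructions. The characterisation in \cref{WIA} is an equivalent criterion for NSOP$_1$, but it does not by itself yield symmetry of $\forkindep^K$ over models, and your two-sentence plan does not make clear how the amalgamation step is to produce the contradictory realisation while keeping all the coheir hypotheses intact. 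Similarly, in (5) and (6), the phrase "an analogue of the Kim--Pillay theorem then concludes simplicity" is doing a great deal of unstated work: one cannot invoke the classical Kim--Pillay theorem off the shelf here, and the actual verification in \cite{Kaplan2017} that base monotonicity (respectively, coincidence with forking) forces simplicity requires checking the full list of axioms for $\forkindep^K$, including local character and the independence theorem, which themselves depend on items (1)--(4). In short, the paper settles for citation; you attempt reconstruction. Of your reconstruction, item (1) is on target, but the routes proposed for (3), (5), (6) are too compressed to count as proofs, and for (3) the intended key lemma appears to be misidentified.
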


\section{Basic properties of ACF\texorpdfstring{$_T$}{T}}\label{subfield basic properties of ACFT}

In this section we will define and study the basic properties of $\ACF_T$, the theory of algebraically closed fields with a distinguished subfield (in an arbitrary language).
We will also consider expansions of the theory by definable relations and functions, that Delon defined to study pairs of $\ACF$ in \cite{Delon2012}.

\subsection{Delon's language}

\begin{definition} \label{delons language}
  Let $T$ be a theory of fields (not necessarily complete), in a language expanding the language of rings $L\supseteq L_{\mathrm{rings}}$.
  Expand $L$ to the language $L^P=L\cup\set{P}$, with $P$ a unitary predicate, and expand $\ACF$ to $\ACF_T$ in the language $L^P$ by adding the following axioms:
  \begin{enumerate}
    \item $P$ is a subfield of the universe, i.e. $P$ is closed under the ring operations (and contains $0,1$).
    \item $P$ is a model of $T$.
           This can be achieved by taking all the axioms of $T$ and restricting the quantifiers to be over $P$ (see \cref{corresponding formula}).
    \item For every $n$-ary function symbol $f\in L\setminus L_{\mathrm{rings}}$, if $x_0,\dots x_{n-1}\in P$, then $f(x_0,\dots ,x_{n-1})\in P$.
          Else, if some $x_i\notin P$, then we do not care about the value of $f(x_0,\dots ,x_{n-1})$, and we can set it arbitrarily to $0$.
    \item For every $n$-ary relation symbol $R\in L$ (equivalently $R\in L\setminus L_{\mathrm{rings}}$ as $L_{\mathrm{rings}}$ does not have any relation symbols), if some $x_i\notin P$, then $\lnot R(x_0,\dots ,x_{n-1})$.
          That is, $R\subseteq P^n$.
    \item The degree of the field extension of the universe over $P$ is infinite, i.e. the universe has infinite dimension as a vector space over $P$.
    By the Artin-Schreier theorem \cite{ArtinSchreier1926}, it is enough to assert that the degree is at least $3$.
  \end{enumerate}
\end{definition}

\begin{remarkcnt}
  The assumption that the degree of the universe over $P$ is infinite, that is, for $M\vDash \ACF_T$, $[M:P_M]=\infty$, always holds when models of $T$ are not algebraically closed or real closed, because in that case $[\overline{P_M}:P_M]=\infty$.
  When models of $T$ are algebraically closed, it simply means that $M\ne P_M$, i.e. $(M,P_M)$ is a proper pair.
  The only case excluded is when models of $T$ are real closed and $M=\overline{P_M}$, but then $(\overline{P_M},P_M)$ is definable in $P_M$.
\end{remarkcnt}

\begin{definition}
  Let $T$, $L$ be as above. Consider the following definable relations and functions over $\ACF_T$:
  \begin{itemize}
    \item For $n\ge 1$, define the $n$-ary relation $l_n$ by $l_n(x_0,\dots ,x_{n-1})$ iff $x_0,\dots ,x_{n-1}$ are linearly independent over $P$.
    \item For $n\ge 1$, suppose we have $l_n(x_0,\dots ,x_{n-1})$ and $\lnot l_{n+1}(x_0,\dots ,x_n)$.
    That is, $x_0,\dots ,x_{n-1}$ are linearly independent over $P$ and $x_n$ is in their span over $P$.
    Then there are unique $y_i\in P$ such that $x_n=y_0x_0+\dots + y_{n-1}x_{n-1}$.
    Define the $n+1$-ary function $f_{n,i}$ by $f_{n,i}(x_n;x_0,\dots,x_{n-1})=y_i$.
    If $x_0,\dots,x_n$ do not satisfy this condition, then we do not care about the value of $f_{n,i}(x_n;x_0,\dots,x_{n-1})$ and can set it arbitrarily to $0$.
  \end{itemize}
  Expand $\ACF_T$ to $\ACF_T^{ld}$ in the language $L^{ld}=L^P\cup \set{l_n}_{n\ge 1}$, by defining $l_n$ as above.
  Expand $\ACF_T^{ld}$ to $\ACF_T^{f}$ in the language $L^f=L^{ld}\cup \set{f_{n,i}}_{n>i\ge 0}$, by defining $f_{n,i}$ as above.
\end{definition}

\begin{notation}
  If $M\models\ACF_T$, then let $P_M$ be the predicate $P$ in $M$ with the associated $L$-structure.
  If $A\subseteq M$ is a subset, then let $P_A=P_M\cap A$.
  This notation is used instead of the usual $P(M)$ and $P(A)$, because the notation $P(A)$ is reserved for the field extension of $P$ by $A$.
\end{notation}

\begin{definition} \label{def bounded}
  Call a formula $\phi(x)\in L^P$ \emph{bounded} if every quantifier in $\phi$ is over $P$.
\end{definition}

\begin{remarkcnt} \label{corresponding formula}
    For a formula $\phi(x)\in L$ there is a corresponding bounded formula $\phi^P(x)\in L^P$ created by restricting every quantifier to be over $P$ and asserting $x\in P$. 
    For $M\vDash \ACF_T$, we have $\phi^P(M)=\phi(P_M)$.
\end{remarkcnt}
\subsection{Substructures and isomorphisms}
\begin{lemma} \label{substructure}
  Let $M\vDash \ACF_T^{f}$ and $A\subseteq M$ a subset.
  Then $A$ is an $L^f$-substructure iff $P_A\subseteq P_M$ is an $L$-substructure, $A$ is a subring, $P_A$ is a subfield and $\fr(A)\forkindep^l_{P_A} P_M$.
\end{lemma}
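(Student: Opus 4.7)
The plan is to prove both directions separately, using the functions $f_{n,i}$ (in the $L^f$-substructure hypothesis) to translate between linear dependence over $P_M$ and over $P_A$.

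For the forward direction, assume $A$ is an $L^f$-substructure. That $A$ is a subring is immediate from closure under $L_{\mathrm{rings}}$-operations. For $P_A \subseteq P_M$ being an $L$-substructure, note that the axioms of $\ACF_T$ force every $L \setminus L_{\mathrm{rings}}$-function to stay inside $P$ whenever its inputs lie in $P$, and relations only fire inside $P^n$; hence $P_A$ inherits the $L$-structure from $P_M$. To see that $P_A$ is a subfield, take any nonzero $a \in P_A$: we have $l_1(a)$ and $\neg l_2(a,1)$, so $a^{-1} = f_{1,0}(1;a) \in A$, and since $a^{-1}\in P_M$, we get $a^{-1}\in P_A$. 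For the linear disjointness, first show $A \forkindep^l_{P_A} P_M$: given $a_0,\dots,a_{n-1} \in A$ linearly dependent over $P_M$, pick a minimal such dependency so that $l_{n-1}(a_0,\dots,a_{n-2})$ holds. Then $a_{n-1} = \sum y_i a_i$ with $y_i \in P_M$, and $y_i = f_{n-1,i}(a_{n-1}; a_0,\dots,a_{n-2}) \in A \cap P_M = P_A$, witnessing $P_A$-dependence. To pass from $A$ to $\fr(A)$, clear denominators: any $P_M$-dependence in $\fr(A)$ scales to one in $A$ since $P_A$ is now a subfield.

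For the backward direction, assume all four conditions. Ring closure is given. Closure under $L\setminus L_{\mathrm{rings}}$-functions is free: if some argument lies outside $P_A\subseteq P_M$ the function returns $0\in A$, and if all arguments are in $P_A$ then the image is in $P_A$ by the $L$-substructure hypothesis. The relations $l_n$ and any $L\setminus L_{\mathrm{rings}}$-relations pose no closure issue. The main content is closure under $f_{n,i}$: suppose $a_0,\dots,a_n \in A$ satisfy $l_n(a_0,\dots,a_{n-1})$ and $\neg l_{n+1}(a_0,\dots,a_n)$, so there are unique $y_i \in P_M$ with $a_n = \sum y_i a_i$. Since $P_A \subseteq P_M$, the tuple $a_0,\dots,a_{n-1}$ is also $P_A$-linearly independent; but by $\fr(A)\forkindep^l_{P_A} P_M$ applied contrapositively, the $P_M$-dependence of $a_0,\dots,a_n$ forces a $P_A$-dependence among them, which (since the first $n$ are still $P_A$-independent) must express $a_n$ as a $P_A$-linear combination of $a_0,\dots,a_{n-1}$. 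By uniqueness of coefficients over $P_M$, these $P_A$-coefficients coincide with the $y_i$, so $y_i \in P_A \subseteq A$.

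The main obstacle is the $f_{n,i}$-closure step in the backward direction, where one must carefully combine the linear disjointness with the uniqueness of the $P_M$-expansion coefficients to conclude they actually lie in $P_A$. The only other bookkeeping subtlety is ensuring that linear disjointness of $\fr(A)$ from $P_M$ over $P_A$ (a field!) passes down to $A$ and conversely that $A\forkindep^l_{P_A} P_M$ lifts to $\fr(A)\forkindep^l_{P_A}P_M$, both of which are routine clear-the-denominator computations that rely crucially on $P_A$ being a subfield (which is why we established that property first).
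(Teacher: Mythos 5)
Your proof is correct and follows the paper's own proof essentially step-for-step: the same use of $f_{1,0}(1;a)$ to show $P_A$ is a subfield, the same use of the $f_{n,i}$ to extract $P_A$-coefficients from a $P_M$-dependency for the forward direction (the paper phrases the reduction as ``maximal prefix linearly independent over $P_M$'' where you say ``minimal dependency,'' but these are the same idea), and the same combination of linear disjointness with uniqueness of expansion coefficients to close $A$ under $f_{n,i}$ in the backward direction. The only cosmetic difference is that the paper cites Lang's criterion to move between $A$ and $\fr(A)$, while you explicitly note that this is a denominator-clearing argument using that $P_A$ is a field.
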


\begin{proof}
  Suppose $A\subseteq M$ is an $L^f$-substructure.
  We get that $P_A\subseteq P_M$ is an $L$-substructure, because for any function symbol $f\in L$ and $\overline{a}\in P_A$, $f(\overline{a})\in A$ as $A\subseteq M$ is a substructure, and also $f(\overline{a})\in P_M$ because of the axioms of $\ACF_T$, so $f(\overline{a})\in A\cap P_M=P_A$.
  It is clear that $A$ is a subring, and so is $P_A$, but for every $0\ne a\in P_A$, $a^{-1}=f_{1,0}(1;a)\in P_A$, so $P_A$ is also a subfield.
  By \cite[Chapter III, Criterion 1]{lang2019introduction}, to prove that $\fr(A)\forkindep^l_{P_A} P_M$, it is enough to show that if $a_0,\dots,a_{n-1}\in A$ are linearly dependent over $P_M$, then they are linearly dependent over $P_A$. 
  Suppose $a_0,\dots ,a_{n-1}\in A$  are linearly dependent over $P_M$.
  If $a_0=0$, then the tuple is trivially linearly dependent over $P_A$.
  Else, there is some maximal $1\le k<n$ such that $a_0,\dots ,a_{k-1}$ are linearly independent over $P_M$, so we have $\vDash l_k(a_0,..,a_{k-1})$ and $\models\lnot l_{k+1}(a_0,\dots ,a_k)$.
  Hence we can look at $p_i=f_{k,i}(a_k;a_0,\dots ,a_{k-1})\in P_M$, which give us $a_k=p_0a_0+\dots +p_{k-1}a_{k-1}$.
  Because $A$ is a substructure, $p_i\in A$, so $p_i\in P_A$.
  Thus, $a_0,\dots ,a_{n-1}$ are linearly dependent over $P_A$.

  In the other direction, suppose $A$ is a subring, $P_A$ is a subfield, $P_A\subseteq P_M$ is an $L$-substructure and $\fr(A)\forkindep^l_{P_A} P_M$.
  It follows that $\fr(A)\cap P_M = P_A$, and in particular $A\cap P_M = P_A$.
  For any function symbol $f\in L\setminus L_{\mathrm{rings}}$ and $a_0,\dots ,a_{n-1}\in A$, if $a_0,\dots ,a_{n-1}\in P_A$, then $f(a_0,\dots ,a_{n-1})\in P_A$ as $P_A\subseteq P_M$ is a substructure, and else we defined $f(a_0,\dots ,a_{n-1})=0\in A$.
  It remains to check that $A$ is closed under $f_{n,i}$.
  Let $a_0,\dots ,a_n\in A$ and suppose $\vDash l_n(a_0,\dots ,a_{n-1})$, $\models\lnot l_{n+1}(a_0,\dots ,a_n)$.
  Let $p_i=f_{n,i}(a_n;a_0,\dots ,a_{n-1})$, that is $p_i\in P_M$ and $a_n=p_0a_0+\dots +p_{n-1}a_{n-1}$.
  We know that $a_0,\dots ,a_n$ are linearly dependent over $P_M$, so by $\fr(A)\forkindep^l_{P_A} P_M$ they are linearly dependent over $P_A$.
  However, $a_0,\dots ,a_{n-1}$ must be linearly independent over $P_A$, as they are linearly independent over $P_M$, so $a_n$ can be written as a linear combination of $a_0,\dots ,a_{n-1}$ over $P_A$.
  This linear combination is in particular over $P_M$, but $a_n=p_0a_0+\dots +p_{n-1}a_{n-1}$ is the unique linear combination over $P_M$, so we must have $p_0,\dots ,p_{n-1}\in P_A$, as needed.
\end{proof}

\begin{corollary} \label{field of fractions substructure}
    If $M\vDash \ACF_T^f$ and $A\subseteq M$ is an $L^f$-substructure, then $\fr(A)\subseteq M$ is an $L^f$-substructure with $P_{\fr(A)}=P_A$.
\end{corollary}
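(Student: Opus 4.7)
The plan is to first prove the equality $P_{\fr(A)} = P_A$, and then invoke \cref{substructure} applied to $\fr(A)$ in place of $A$; all four conditions there will be inherited from the hypothesis that $A$ is an $L^f$-substructure.

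The inclusion $P_A \subseteq P_{\fr(A)}$ is immediate from $A \subseteq \fr(A)$ together with $P_A \subseteq P_M$. For the reverse inclusion, the crucial observation is that ``taking the quotient'' is encoded by the function symbol $f_{1,0}$. Indeed, suppose $c \in P_{\fr(A)}$, so that $c = a/b$ for some $a, b \in A$ with $b \neq 0$ and $c \in P_M$. Since $b$ is a single nonzero element, $l_1(b)$ holds, and from $a = cb$ with $c \in P_M$ we get $\lnot l_2(b, a)$. By the definition of $f_{1,0}$, this forces $f_{1,0}(a; b) = c$. Because $A$ is closed under $f_{1,0}$, the element $c$ lies in $A$, and combined with $c \in P_M$ we conclude $c \in P_A$.

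Having established $P_{\fr(A)} = P_A$, we verify the four conditions of \cref{substructure} for $\fr(A)$: (i) $P_{\fr(A)} = P_A \subseteq P_M$ is an $L$-substructure, inherited directly from $A$; (ii) $\fr(A)$ is a subring, being a field; (iii) $P_{\fr(A)} = P_A$ is a subfield, again given; (iv) $\fr(\fr(A)) = \fr(A) \forkindep^l_{P_{\fr(A)}} P_M$, which is precisely the linear disjointness condition that $A$ already satisfies. Thus \cref{substructure} yields that $\fr(A)$ is an $L^f$-substructure of $M$.

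The only step requiring any thought is the computation of $P_{\fr(A)}$, and even this amounts to a direct application of the definability of division-by-nonzero via $f_{1,0}$; there is no genuine obstacle.
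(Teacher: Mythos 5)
Your proof is correct. The overall structure matches the paper's: establish $P_{\fr(A)} = P_A$ and then invoke \cref{substructure} for $\fr(A)$, noting that the four required conditions carry over from $A$. The one place you diverge is in how you establish $P_{\fr(A)} = P_A$. The paper first applies \cref{substructure} to $A$ to get $\fr(A) \forkindep^l_{P_A} P_M$, and then reads off the equality directly from linear disjointness: if $c \in \fr(A) \cap P_M \setminus P_A$, then $\{1,c\}\subseteq\fr(A)$ would be linearly independent over $P_A$ but linearly dependent over $P_M$ (since $c\cdot 1 - 1\cdot c = 0$ with coefficients in $P_M$), contradicting $\forkindep^l$. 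You instead argue from closure of $A$ under the function symbol $f_{1,0}$: if $c = a/b \in P_M$ with $a,b\in A$, $b\neq 0$, then $l_1(b)$ and $\lnot l_2(b,a)$ hold, so $c = f_{1,0}(a;b) \in A$, hence $c \in P_A$. Both arguments are correct; yours is more concrete and reveals explicitly how the Delon function symbols prevent $P$ from growing under forming fractions, whereas the paper's is slightly shorter since it only uses \cref{substructure} once and observes the equality as a by-product of the linear disjointness that is anyway needed for the final application.
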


\begin{proof}
    \cref{substructure} implies that $\fr(A)\forkindep^l_{P_A} P_M$, and in particular $P_{\fr(A)}=P_M\cap \fr(A)=P_A$.
    Thus, $P_{\fr(A)}\subseteq P_M$ is a subfield and an $L$-substructure, $\fr(A)$ is a subring (even subfield) and $\fr(A)\forkindep^l_{P_{\fr(A)}} P_M$, so by \cref{substructure} $\fr(A)\subseteq M$ is an $L^f$-substructure.
\end{proof}

\begin{lemma} \label{isomorphism}
  Let $M,N\vDash \ACF_T^f$ and let $A\subseteq M$, $B\subseteq N$ be $L^f$-substructures.
  A map $\sigma:A\to B$ is an $L^f$-isomorphism iff $\sigma$ is an isomorphism of rings such that $\sigma(P_A)=P_B$ and $\sigma|_{P_A}:P_A\to P_B$ is an $L$-isomorphism.
\end{lemma}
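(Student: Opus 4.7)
The forward direction is essentially by definition of an $L^f$-isomorphism. Such a $\sigma$ preserves all the ring operations (so it is a ring isomorphism), preserves the unary predicate $P$ (so $\sigma(P_A)=P_B$), and preserves every symbol of $L\subseteq L^f$. Since on elements of $P_A$ every $L$-symbol is by definition interpreted as in $P_M$ (and similarly for $P_B\subseteq P_N$), the restriction $\sigma|_{P_A}:P_A\to P_B$ is an $L$-isomorphism.

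For the converse, assume $\sigma$ is a ring isomorphism with $\sigma(P_A)=P_B$ and $\sigma|_{P_A}$ an $L$-isomorphism. We must verify that $\sigma$ preserves $l_n$ and $f_{n,i}$. The plan is to use \cref{substructure}, which gives $\fr(A)\forkindep^l_{P_A} P_M$ and $\fr(B)\forkindep^l_{P_B} P_N$; this will let us reduce ``linear (in)dependence over the large field'' to the same over the small field, which is in turn preserved by the ring isomorphism $\sigma|_{P_A}$.

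Concretely, for $l_n$: given $a_0,\dots,a_{n-1}\in A$, the tuple is linearly independent over $P_M$ iff it is linearly independent over $P_A$ (using $\fr(A)\forkindep^l_{P_A} P_M$ in both directions: one direction is monotonicity of linear independence, the other uses that a $P_M$-dependence forces a $P_A$-dependence by linear disjointness). Applying $\sigma$ (a ring isomorphism sending $P_A$ onto $P_B$), the same equivalence on the $N$-side yields that $\sigma(a_0),\dots,\sigma(a_{n-1})$ are linearly independent over $P_N$ iff they are so over $P_B$; hence $l_n$ is preserved.

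For $f_{n,i}$: suppose $l_n(a_0,\dots,a_{n-1})$ holds and $\neg l_{n+1}(a_0,\dots,a_n)$, and let $p_i=f_{n,i}(a_n;a_0,\dots,a_{n-1})\in P_M$ be the unique coefficients with $a_n=\sum_i p_i a_i$. By the argument above, $a_0,\dots,a_n$ are linearly dependent over $P_A$, and since $a_0,\dots,a_{n-1}$ remain linearly independent over $P_A$, uniqueness forces $p_i\in P_A$. Then $\sigma(a_n)=\sum_i \sigma(p_i)\sigma(a_i)$ with $\sigma(p_i)\in P_B\subseteq P_N$, so by the same uniqueness applied in $N$, $f_{n,i}(\sigma(a_n);\sigma(a_0),\dots,\sigma(a_{n-1}))=\sigma(p_i)$, as desired. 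The main subtlety, already isolated in \cref{substructure}, is precisely the use of $\fr(A)\forkindep^l_{P_A} P_M$ to guarantee that the witnessing coefficients actually lie in the small field $P_A$; once this is in hand, everything else is formal.
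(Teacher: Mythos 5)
Your proof is correct and follows essentially the same route as the paper's: reduce linear (in)dependence over $P_M$ (resp. $P_N$) to linear (in)dependence over $P_A$ (resp. $P_B$) via the linear disjointness guaranteed by \cref{substructure}, then transfer across $\sigma$ as a ring isomorphism carrying $P_A$ onto $P_B$, and use the uniqueness of the $f_{n,i}$-coefficients. The only cosmetic difference is that you phrase the $l_n$ step as a biconditional on both sides, while the paper runs it as a proof by contradiction on the $N$-side and invokes symmetry for the converse.
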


\begin{proof}
    If $\sigma$ is an $L^f$ isomorphism, then it is clearly an isomorphism of rings, $\sigma(P_A)=P_B$ because $\sigma$ preserves $P$ and $\sigma|_{P_A}:P_A\to P_B$ is an $L$-isomorphism because $L^f$ expands $L$ on $P$.
    For the other direction, we need to show that $\sigma$ preserves $l_n$, $f_{n,i}$.
    Let $a_0,\dots ,a_{n-1}\in A$ with $\vDash l_n(a_0,\dots ,a_{n-1})$.
    Suppose we have $\models\lnot l_n(\sigma(a_0),\dots ,\sigma(a_{n-1}))$, i.e. $\sigma(a_0),\dots ,\sigma(a_{n-1})$ are linearly dependent over $P_N$.
    \cref{substructure} implies that $\fr(B)\forkindep^l_{P_B} P_N$, so $\sigma(a_0),\dots ,\sigma(a_{n-1})$ are also linearly dependent over $P_B$.
    Thus, there are $q_0,\dots ,q_{n-1}\in P_B$, not all zero, such that $q_0\sigma(a_0)+\dots +q_{n-1}\sigma(a_{n-1})=0$.
    By applying $\sigma^{-1}$ we get $\sigma^{-1}(q_0)a_0+\dots +\sigma^{-1}(q_{n-1})a_{n-1}=0$, however $\sigma^{-1}(q_0),\dots ,\sigma^{-1}(q_{n-1})\in P_A$, in contradiction to $\vDash l_n(a_0,\dots ,a_{n-1})$.
    The other direction follows from symmetry.
    Now suppose we have $a_0,\dots ,a_n\in A$ with $\vDash l_n(a_0,\dots ,a_{n-1})$ and $\models\lnot l_{n+1}(a_0,\dots ,a_n)$.
    By the first part, we also have $\vDash l_n(\sigma(a_0),\dots ,\sigma(a_{n-1}))$ and $\models\lnot l_{n+1}(\sigma(a_0),\dots ,\sigma(a_n))$.
    Let $p_i=f_{n,i}(a_n;a_0,\dots ,a_{n-1})\in P_A$, $a_n=p_0a_0+\dots +p_{n-1}a_{n-1}$.
    Apply $\sigma$ to get $\sigma(a_n)=\sigma(p_0)\sigma(a_0)+\dots +\sigma(p_{n-1})\sigma(a_{n-1})$, but $\sigma(p_0),\dots ,\sigma(p_{n-1})\in P_B$, so by uniqueness $\sigma(p_i)=f_{n,i}(\sigma(a_n);\sigma(a_0),\dots ,\sigma(a_{n-1})$.
\end{proof}

\begin{lemma} \label{submodel}
    Let $M,N\vDash \ACF_T$. 
    By adding definable relations and functions, $M$ and $N$ can be expanded to models of $\ACF_T^{ld}$, $\ACF_T^f$.
    With those expansions, the following are equivalent:
    \begin{enumerate}
        \item $M\subseteq N$ is an $L^f$-substructure.
        \item $M\subseteq N$ is an $L^{ld}$-substructure.
        \item $M\subseteq N$ is a subfield, $P_M\subseteq P_N$ is an $L$-substructure and $M\forkindep^l_{P_M} P_N$.
    \end{enumerate}
\end{lemma}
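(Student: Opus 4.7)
The plan is to derive the equivalence by reducing to \cref{substructure}, which already characterises $L^{f}$-substructures in general. The key observation is that here $M$ is not an arbitrary subset but a model of $\ACF_T$, so $M = \fr(M)$ and $P_M$ is itself a field (being a model of $T$); this removes most of the ``housekeeping'' clauses of \cref{substructure}.

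I would prove $(1)\Rightarrow(2)\Rightarrow(3)\Rightarrow(1)$. The implication $(1)\Rightarrow(2)$ is immediate because $L^{ld}\subseteq L^{f}$. For $(2)\Rightarrow(3)$, if $M\subseteq N$ is an $L^{ld}$-substructure then in particular it is an $L^{P}$-substructure, so $M\subseteq N$ is a subring (hence a subfield since $M$ is algebraically closed) and $P_M\subseteq P_N$ is an $L$-substructure. To obtain $M\forkindep^l_{P_M}P_N$, I use the contrapositive characterisation from \cite[Chapter III, Criterion 1]{lang2019introduction}: suppose $a_0,\dots,a_{n-1}\in M$ are linearly independent over $P_M$, i.e.\ $M\models l_n(a_0,\dots,a_{n-1})$; since $l_n$ is preserved, $N\models l_n(a_0,\dots,a_{n-1})$, meaning they are linearly independent over $P_N$.

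For $(3)\Rightarrow(1)$, I apply \cref{substructure} directly with $A=M$: the hypotheses give that $P_M\subseteq P_N$ is an $L$-substructure, $M$ is a subring, $P_M$ is a subfield (since $P_M\models T$ is a field), and $\fr(M)=M$ satisfies $M\forkindep^l_{P_M}P_N$. All four clauses of \cref{substructure} hold, so $M$ is an $L^{f}$-substructure of $N$.

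The only slightly non-formal step is checking preservation of the $f_{n,i}$ when going from $(2)$ to the $L^{f}$-structure, but this is not actually needed in the above route since we go $(2)\Rightarrow(3)\Rightarrow(1)$ and $(1)$ supplies preservation of $f_{n,i}$ for free. I do not expect any genuine obstacle: the content of the lemma is really just that, once we restrict to the case where the smaller structure is itself a model, the conditions on $\fr(A)$, on $A$ being a subring, and on $P_A$ being a subfield in \cref{substructure} collapse to the single linear disjointness statement, and preservation of $l_n$ alone is already equivalent to that linear disjointness.
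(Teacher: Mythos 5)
Your outline is essentially the paper's: $(1)\Rightarrow(2)$ is trivial, $(2)\Rightarrow(3)$ uses preservation of $l_n$ together with the restriction of $L$-symbols to $P$, and $(3)\Rightarrow(1)$ reduces to \cref{substructure}. But the $(3)\Rightarrow(1)$ step as you wrote it has a gap worth naming.

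\cref{substructure} is a statement about a \emph{subset} $A\subseteq N$: it tells you when $A$ is closed under the $L^f$-operations of $N$, so that the \emph{induced} $L^f$-structure on $A$ is well defined. In \cref{submodel}, however, $M$ already carries its own $L^f$-structure as a model of $\ACF_T^f$, computed from its own predicate $P_M$. Saying ``$M\subseteq N$ is an $L^f$-substructure'' means the inclusion is an $L^f$-embedding, i.e.\ that $M$'s intrinsic structure agrees with the structure induced from $N$. Applying \cref{substructure} with $A=M$ only gives you the second half of this picture (closure under $N$'s operations); it does not by itself show that, e.g., $l_n^M$ (linear independence over $P_M$) coincides with $l_n^N\restriction M$ (linear independence over $P_N$), or that $f_{n,i}^M$ agrees with $f_{n,i}^N\restriction M$, or even that $M\cap P_N = P_M$. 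All of these do follow from the hypotheses, but they require an argument: $M\cap P_N=P_M$ and the agreement of the $l_n$'s both come from $M\forkindep^l_{P_M}P_N$, and the agreement of the $f_{n,i}$'s then follows from uniqueness of linear representations. The paper packages exactly this verification by introducing the auxiliary $L^f$-structure $M'$ (underlying set $M$, structure induced from $N$), applying \cref{substructure} to show $M'$ is a substructure, and then invoking \cref{isomorphism} to conclude that the identity $M\to M'$ is an $L^f$-isomorphism.

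Your closing remark that ``$(1)$ supplies preservation of $f_{n,i}$ for free'' does not help here: $(1)$ is what you are trying to establish, and the only input you have is $(3)$. So the argument needs the extra step comparing intrinsic and induced structures; once you add it (either along the lines above or by directly checking the agreement of $l_n$ and $f_{n,i}$), the proof is correct and matches the paper's.
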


\begin{proof}
    $1\implies 2$: 
    $L^{ld}$ is a restriction of $L^f$.

    $2\implies 3$:
    It is clear that $M\subseteq N$ is a subfield and $P_M\subseteq P_N$ as sets.
    For every quantifier free formula $\phi(\overline{x})\in L$ and $\overline{a}\in P_M$, $P_M\vDash \phi(\overline{a})\iff M\vDash \phi(\overline{a})\land \overline{a}\in P\iff N\vDash \phi(\overline{a})\land \overline{a}\in P\iff P_N\models\phi(\overline{a})$, so $P_M$ is an $L$-substructure of $P_N$.
    Let $a_0,\dots,a_{n-1}\in M$ be linearly independent over $P_M$, $M\vDash l_n(a_0,\dots,a_{n-1})\implies N\vDash l_n(a_0,\dots,a_{n-1})$, so $a_0,\dots,a_{n-1}$ are linearly independent over $P_N$.
    Thus, $M\forkindep^l_{P_M} P_N$.

    $3\implies 1$:
    Let $M'$ be the $L^f$-structure with the same underlying set as $M$, but with structure induced as a subset of $N$.
    Note that $M'\subseteq N$ is really an $L^f$-substructure, from \cref{substructure}.
    To prove that $M$ is an $L^f$-substructure of $N$, we need to show that $M$ and $M'$ have the same structure, that is that the identity map $id:M\to M'$ is an $L^f$-isomorphism.
    We know that $M$ is a subfield of $N$, so $id:M\to M'$ is a field isomorphism. 
    From $M\forkindep^l_{P_M} P_N$ we get that $P_{M'}=M\cap P_N=P_M$ and $P_M$ is an $L$-substructure of $P_N$, so $id|_{P_M}:P_M\to P_{M'}$ is an $L$-isomorphism.
    \cref{isomorphism} implies that $id$ is an $L^f$-isomorphism.
\end{proof}

\subsection{Saturated models}
We will study saturated models of ACF$_T$.
Note that $\kappa$-saturated models of $\ACF_T$ are the same as $\kappa$-saturated models of $\ACF_T^{ld}$ or $\ACF_T^f$, because $\set{l_n}_{n>1}$ and $\set{f_{n,i}}_{n>i>0}$ are definable in $\ACF_T$.
A full characterization of $\kappa$-saturated models will be given in \cref{saturated}.

\begin{lemma} \label{saturation P}
  If $M\vDash \ACF_T$ is $\kappa$-saturated, then $P_M$ is a $\kappa$-saturated model of $T$.
\end{lemma}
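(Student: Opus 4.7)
The plan is to reduce $\kappa$-saturation of $P_M$ to the $\kappa$-saturation of $M$ by translating $L$-types over subsets of $P_M$ into $L^P$-types over the same parameters via the $P$-formula construction of \cref{corresponding formula}. Concretely, given a partial $L$-type $p(x)$ over a set $A \subseteq P_M$ with $|A| < \kappa$ that is consistent with $\Th(P_M)$, I would form the partial $L^P$-type
\[
q(x) = \{\phi^P(x;\bar{a}) : \phi(x;\bar{a}) \in p\},
\]
still of size $< \kappa$ and with parameters in $A \subseteq M$.

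The key step is to verify that $q$ is finitely satisfiable in $M$. For any finite $\phi_1,\ldots,\phi_n \in p$, consistency of $p$ with $\Th(P_M)$ yields $P_M \models \exists x\,(\phi_1\wedge\cdots\wedge\phi_n)(x;\bar{a})$, and since $\bar{a} \in P_M$, the identity $\psi^P(M) = \psi(P_M)$ from \cref{corresponding formula} translates this immediately into $M \models \exists x\,(\phi_1^P \wedge \cdots \wedge \phi_n^P)(x;\bar{a})$. By $\kappa$-saturation of $M$ the type $q$ is then realized by some $b \in M$. The conjunct $x \in P$ built into each $\phi^P$ forces $b \in P_M$, and the equivalence $M \models \phi^P(b;\bar{a}) \iff P_M \models \phi(b;\bar{a})$ shows that $b$ realizes $p$ inside $P_M$.

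There is no genuine obstacle here: the only mildly delicate point is to confirm that the translation $\phi \mapsto \phi^P$ is well-defined on $L$-formulas with parameters from $P_M$ and commutes with conjunction and the existential quantifier, which is immediate from the inductive definition. All of the substantive work was carried out when setting up the axiomatization of $\ACF_T$ in \cref{delons language} (in particular the clauses forcing $L$-function symbols to map $P^n$ into $P$ and $L$-relation symbols to live inside $P^n$), which is what makes $P_M$ an $L$-substructure of $M$ satisfying $\phi^P(M)=\phi(P_M)$.
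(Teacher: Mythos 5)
Your proof is correct and follows essentially the same route as the paper: translate a type over $A\subseteq P_M$ into an $L^P$-type via $\phi\mapsto\phi^P$, realize it in $M$ by $\kappa$-saturation, and note that the realization lies in $P_M$ and realizes the original type. Your verification of consistency via finite satisfiability just unpacks what the paper states in one line.
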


\begin{proof}
  Follows from \cref{corresponding formula}, by relativizing each formula in the type we wish to realize to $P$.
\end{proof}

For the next result, we will need the following algebraic technical lemma, whose proof is left as an exercise to the reader.

\begin{fact} \label{transcendental degree}
 Suppose $F$ is a field and $t$ is transcendental over $F$. For every $n$, $[F(t):F(t^n)]=n$.
\end{fact}


\begin{lemma} \label{saturation trdeg}
  If $M\vDash \ACF_T$ is $\kappa$-saturated, then $\trdeg(M/P_M)\ge\kappa$.
\end{lemma}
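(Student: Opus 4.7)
The plan is a transfinite recursion. Suppose that for $\beta<\alpha<\kappa$ we have already constructed $t_\beta \in M$ with $A_\alpha := \{t_\beta : \beta<\alpha\}$ algebraically independent over $P_M$. Consider the partial type $p_\alpha(x)$ over $A_\alpha$ expressing ``$x$ is transcendental over $P_M(A_\alpha)$''; formally, for each $N,d \geq 1$ and each finite tuple $\bar a \subseteq A_\alpha$ it contains an $L^P$-formula stating that no polynomial in $x$ of degree $\leq N$ whose coefficients are polynomials of $\bar a$-degree $\leq d$ over $P$ vanishes at $x$ (the inner quantification is over $P$, so the formula is first-order). Since $|A_\alpha|<\kappa$, $\kappa$-saturation will realize $p_\alpha$ and the induction will proceed, provided $p_\alpha$ is finitely satisfiable in $(M,P_M)$.

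A finite subset of $p_\alpha$ is implied by a single formula ``$x$ is not algebraic of degree $\leq N$ over $P_M(\bar a)$'' for some finite $\bar a\subseteq A_\alpha$ and some $N\geq 1$. If $\bar a$ is non-empty, pick any $t \in \bar a$ and set $F := P_M(\bar a\setminus\{t\})$; by the inductive hypothesis $t$ is transcendental over $F$, so \cref{transcendental degree} applied to $s := t^{1/(N+1)} \in M$ yields $[F(s):F(t)] = [F(s):F(s^{N+1})] = N+1$, showing that $s$ has algebraic degree $N+1$ over $P_M(\bar a)$ and witnessing the formula.

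The main obstacle is the case $\bar a = \emptyset$, which reduces to finding $x \in M$ of algebraic degree $>N$ over $P_M$ itself. Here axiom (4) of \cref{delons language} is essential: if $P_M$ is algebraically closed then $M \neq P_M$ produces a transcendental; if $P_M$ is real closed then $[M:P_M]=\infty$ together with $[\overline{P_M}:P_M]=2$ forces $M \not\subseteq \overline{P_M}$ and again yields a transcendental; and if $P_M$ is neither, Artin--Schreier gives $[\overline{P_M}:P_M]=\infty$, so the infinite algebraic extension $\overline{P_M} \subseteq M$ contains elements of arbitrarily high degree over $P_M$ (by the primitive element theorem applied to the separable part, together with the unboundedness of purely inseparable degrees). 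This establishes finite satisfiability in all cases; then $\kappa$-saturation realizes each $p_\alpha$, and the resulting sequence $(t_\alpha)_{\alpha<\kappa}$ witnesses $\trdeg(M/P_M) \geq \kappa$.
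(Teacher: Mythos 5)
Your proof is correct and follows essentially the same approach as the paper: express transcendence over $P_M(S)$ as a partial type with quantifiers relativized to $P$, show each finite fragment is satisfiable (the $(N+1)$-th-root trick via \cref{transcendental degree} when $\bar a\neq\emptyset$, the infinite-degree axiom when $\bar a=\emptyset$), and realize by saturation; building $S$ by transfinite recursion rather than taking an arbitrary small $S$ is a purely cosmetic difference.

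Two small cautions. First, your type must quantify only over \emph{nonzero} polynomials, which is slightly delicate since the coefficients range over $P(\bar a)$ rather than a pure sort; the paper arranges this with the clause $\forall x'\, q(x',\bar y)=0$, exploiting that a polynomial over an infinite field is zero iff it vanishes everywhere. Second, the parenthetical justification for the $\bar a=\emptyset$ case overstates: ``unboundedness of purely inseparable degrees'' is not a property of arbitrary infinite purely inseparable extensions (e.g.\ $\mathbb{F}_p(s_i^{1/p}:i<\omega)$ over $\mathbb{F}_p(s_i:i<\omega)$ is infinite but every element has degree at most $p$). What you actually need is that an \emph{imperfect} field has elements $a^{1/p^n}$ of degree exactly $p^n$ whenever $a$ is not a $p$-th power, combined with the observation that if $[\overline{P_M}:P_M]=\infty$ while $[P_M^{\mathrm{sep}}:P_M]<\infty$ then $P_M^{\mathrm{sep}}$ must be imperfect (a separably closed perfect field is algebraically closed). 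The paper sidesteps this by citing \cite[Lemma~3.1]{Keisler1964}, which packages exactly the fact that an infinite algebraic extension contains elements of arbitrarily large degree.
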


\begin{proof}
  Let $S\subseteq M$ be an algebraically independent set over $P_M$. Suppose $\abs{S}<\kappa$, we want to prove that there is some $a\in M$ such that $a\notin \overline{P_M(S)}$.
  Consider the partial type over $S$
  $$\Sigma(x)=\set{\forall \bar{y}\in P\ (q(x,\bar{y})=0 \to\forall x' q(x',\bar{y})= 0)\mid q(x,\bar{y})\in Q[x,\bar{y},S]}$$
  where $Q$ is the prime field ($\F_p$ or $\Q$), $x$ is a single variable and $\bar{y}$ is a tuple of variables.
  Let $\Sigma_n(x)$ contain all formulas in $\Sigma(x)$ where the degree of $q(x,\bar{y})$ in $x$ is $\le n$.
  We will show that $a\vDash \Sigma_n(x)$ iff $[P_M(S,a):P_M(S)]>n$ and that $\Sigma_n(x)$ is satisfiable in $M$.
  From compactness and saturation ($\abs{S}<\kappa$), we will get that $\Sigma(x)$ is satisfied by some $a\in M$.
  But then $[P_M(S,a):P_M(S)]>n$ for all $n$, so $a\notin\overline{P_M(S)}$.
  
  Suppose $a\vDash \Sigma_n(x)$. 
  If $[P_M(S,a):P_M(S)]\le n$, then there is some non-zero polynomial $r(x)\in P_M(S)[x]$ of degree $\le n$ such that $r(a)=0$.
  The coefficients of $r(x)$ are rational functions in $S$ over $P_M$.
  By multiplying by the denominators, we can assume the coefficients are polynomials in $S$ and $P_M$, so $r(x)=q(x,\bar{p})$ for $q(x,\bar{y})\in Q[x,\bar{y},S]$ and $\bar{p}\in P_M$.
  However, because $q(a,\overline{p})=r(a)=0$, we get from $a\vDash \Sigma_n(x)$ that $r(x)$ is constant zero.
  
  Now suppose $[P_M(S,a):P_M(S)]>n$.
  Let $q(x,\bar{y})\in Q[x,\bar{y},S]$ of degree $\le n$ in $x$ and $\bar{p}\in P_M$, such that $q(a,\bar{p})=0$.
  The polynomial $q(x,\bar{p})$ is over $P_M(S)$, has degree $\le n$ and has $a$ as root, but $[P_M(S,a):P_M(S)]>n$, so $q(x,\bar{p})$ must be constant zero.
  Hence $a\vDash \Sigma_n(x)$.

  To prove that $\Sigma_n(x)$ is satisfiable for every $n$, we need to prove that there is some $a\in M$ such that $[P_M(S,a):P_M(S)]>n$.
  Split into three cases.
  \begin{enumerate}
    \item $S=\emptyset$, $M\ne\overline{P_M}$:
          Take some $a\in M\setminus \overline{P_M}$ and we are done.
    \item $S=\emptyset$, $M=\overline{P_M}$:
          The axioms of $\ACF_T$ (\cref{delons language}) imply that $[\overline{P_M}:P_M]=\infty$.
          By \cite[Lemma 3.1]{Keisler1964}, there exists some $a\in \overline{P_M}$ such that $[P_M(a):P_M]>n$.
    \item $S\ne \emptyset$:
          Take some $s_0\in S$ and define $F=P_M(S\setminus\set{s_0})$.
          Because $M$ is algebraically closed, there exists an $n+1$-th root $a=s_0^{\frac{1}{n+1}}\in M$.
          We know that $s_0$ is transcendental over $F$, so $a$ is also transcendental over $F$.
          \cref{transcendental degree} implies that $[F(a):F(s_0)]=n+1$,
          where $F(s_0)=P_M(S)$ and $F(a)=P_M(S,a)$, as needed.
  \end{enumerate}
\end{proof}

\begin{lemma} \label{extend to automorphism}
  Suppose $\trdeg(M/P_M)\ge\kappa$ (in particular, if $M$ is $\kappa$-saturated) and let $A,A'\subseteq M$ be subsets with  $\abs{A},\abs{A'}<\kappa$.
  If $f:P_M(A)\to P_M(A')$ is an isomorphism of fields that restricts to an $L$-automorphism $f|_{P_M}$, then $f$ can be extended to an automorphism of $M$.
\end{lemma}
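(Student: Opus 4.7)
The plan is to extend $f$ by a direct transcendence-basis argument, exploiting that $M$ is algebraically closed. Let $B = P_M(A)$ and $B' = P_M(A')$, and write $\alpha := \trdeg(M/P_M) \geq \kappa$. The first step is a cardinal computation: by the tower formula, $\alpha = \trdeg(M/B) + \trdeg(B/P_M)$, and since $\trdeg(B/P_M) \leq \abs{A} < \kappa \leq \alpha$ with $\alpha$ infinite, this forces $\trdeg(M/B) = \alpha$; symmetrically $\trdeg(M/B') = \alpha$.

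Next, I would pick transcendence bases $T$ of $M$ over $B$ and $T'$ of $M$ over $B'$, both of cardinality $\alpha$, and fix any bijection $\pi : T \to T'$. Since $T$ is algebraically independent over $B$ and $T'$ over $B'$, the data of $f$ on $B$ and $\pi$ on $T$ extend uniquely to a field isomorphism $\tilde{f} : B(T) \to B'(T')$. Because $M$ is the algebraic closure of $B(T)$ and of $B'(T')$, the standard lifting of isomorphisms to algebraic closures produces a field automorphism $\sigma : M \to M$ extending $\tilde{f}$, and hence extending $f$.

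Finally, I would verify that $\sigma$ is an automorphism in the relevant language. Since $P_M \subseteq B$, we have $\sigma|_{P_M} = f|_{P_M}$, which is an $L$-automorphism of $P_M$ by hypothesis; in particular $\sigma(P_M) = P_M$, so $\sigma$ preserves $P$ setwise. Thus $\sigma$ is an $L^P$-automorphism, and by \cref{isomorphism} also an $L^f$-automorphism of $M$.

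The construction is essentially routine; the only subtle point is the cardinal arithmetic showing $\trdeg(M/B) = \alpha$. This uses that $\alpha$ is infinite, so that any cardinal decomposition $\alpha = \beta + \gamma$ with $\beta < \alpha$ forces $\gamma = \alpha$, which holds under the standing assumption that $\kappa$ is an infinite cardinal.
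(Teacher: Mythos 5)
Your proof is correct and follows essentially the same route as the paper's: apply the transitivity of transcendence degree to show $\trdeg(M/P_M(A)) = \trdeg(M/P_M(A')) = \trdeg(M/P_M)$, choose transcendence bases of equal cardinality, map one to the other, lift to the algebraic closure, and invoke \cref{isomorphism}. The only difference is that you spell out the underlying cardinal arithmetic (and note it needs $\kappa$ infinite) where the paper leaves this implicit, which is a reasonable bit of extra care rather than a new idea.
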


\begin{proof}
    From transitivity of transcendental degree
    $$\trdeg(M/P_M)=\trdeg(M/P_M(A))+\trdeg(P_M(A)/P_M),$$
    and $\trdeg(P_M(A)/P_M)\le\abs{A}<\kappa$, so $\trdeg(M/P_M(A))=\trdeg(M/P_M)$.
    Similarly, $\trdeg(M/P_M(A'))=\trdeg(M/P_M)$.
    Let $S,S'\subseteq M$ be transcendence basis of $M$ over $P_M(A),P_M(A')$ respectively, $\abs{S}=\trdeg(M/P_M)=\abs{S'}$.
    Extend $f$ to an automorphism of fields $\sigma:M\to M$, by mapping $S\mapsto S'$ and extending to the algebraic closure arbitrarily.
    The restriction $\sigma|_{P_M}=f|_{P_M}$ is an $L$-automorphism of $P$, so \cref{isomorphism} implies that $\sigma$ is an $L^P$-automorphism.
\end{proof}

\section{Quantifier elimination and more}\label{subfield quantifier elimination and more}

\subsection{Completions}
Keisler \cite{Keisler1964} proved that $\ACF_T$ is complete when $T$ is a complete theory in the language of rings.
We generalize this by allowing the language of $T$ to be arbitrary.

In his proof, Keisler used special models.
We will instead use saturated models, which simplifies the proof, but requires an additional set-theoretic assumption (namely, the generalized continuum hypothesis). There are standard techniques from set theory that ensures the generalized continuum hypothesis from some point on while fixing a fragment of the universe (so this does not affect questions of e.g., completeness of a given theory), see \cite{halevi2021saturated}, and we will use this freely.

\begin{proposition} \label{complete}
    If $T$ is a complete theory of fields, then $\ACF_T$ is complete.    
\end{proposition}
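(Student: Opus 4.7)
The plan is to show that any two sufficiently saturated models of $\ACF_T$ are $L^P$-elementarily equivalent via a back-and-forth argument. Pass to $\kappa$-saturated elementary extensions $(K_1,F_1),(K_2,F_2)$ of two arbitrary models, for $\kappa$ larger than all relevant parameter sets. By \cref{saturation P} the $F_i$ are themselves $\kappa$-saturated models of $T$, and the completeness of $T$ gives $F_1\equiv F_2$ in $L$; in particular there is a nonempty system of partial $L$-elementary maps between them. By \cref{saturation trdeg} we also have $\trdeg(K_i/F_i)\geq\kappa$.

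Work in the definitional expansion $\ACF_T^f$ so that \cref{substructure,isomorphism} apply directly. Define $\mathcal{F}$ to be the set of $L^f$-isomorphisms $f\colon A\to B$ between $L^f$-substructures $A\subseteq K_1$ and $B\subseteq K_2$ of cardinality less than $\kappa$, with the additional property that $f|_{P_A}\colon P_A\to P_B$ is a partial $L$-elementary map between $F_1$ and $F_2$. By \cref{substructure}, $A,B$ are subfields satisfying $A\forkindep^l_{P_A}F_1$ and $B\forkindep^l_{P_B}F_2$, and by \cref{isomorphism} specifying $f$ amounts to giving a field isomorphism whose restriction to the $P$-parts is an $L$-isomorphism. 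The family is nonempty: the identification of the prime $L^f$-substructures is $L$-elementary on the $P$-parts by $F_1\equiv F_2$.

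The back-and-forth step is then: given $f\colon A\to B\in\mathcal{F}$ and $a\in K_1$, produce $f'\in\mathcal{F}$ extending $f$ with $a$ in its domain. If $a$ is transcendental over the compositum $A\cdot F_1$, use $\trdeg(K_2/F_2)\geq\kappa$ together with $\kappa$-saturation of $K_2$ to find $a'\in K_2$ transcendental over $B\cdot F_2$; then $A(a)\forkindep^l_{P_A}F_1$ by transitivity of $\forkindep^l$, and setting $f'(a)=a'$ yields the extension. If $a$ is algebraic over $A\cdot F_1$, choose a finite tuple $\bar p\subseteq F_1$ (coming from the coefficients of the minimal polynomial of $a$ over $A\cdot F_1$, via the tensor description of \cref{linear disjointedness tensor products}) such that $a\in\overline{A(\bar p)}$; extend $f|_{P_A}$ to a partial $L$-elementary map sending $\bar p$ to some $\bar p'\subseteq F_2$ using $\kappa$-saturation of $F_2$; amalgamate via \cref{isomorphism amalgamation} after base monotonicity gives $A(\bar p)\forkindep^l_{P_A(\bar p)}F_1$; finally send $a$ to a suitable root of the image of its minimal polynomial in the algebraically closed field $K_2$.

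The main obstacle is the algebraic case: adjoining $a$ may introduce new elements into $A(\bar p,a)\cap F_1$ beyond $P_A(\bar p)$, so the naive extension can fail the linear-disjointness requirement and hence fail to lie in $\mathcal{F}$. One handles this by iterating the procedure — at each step enlarging $\bar p$ to absorb the new $F_1$-intersections, invoking $\kappa$-saturation of $F_2$ to keep the $P$-restriction partial $L$-elementary, and re-amalgamating — until the substructure closes up. With the back-and-forth established, standard arguments yield $(K_1,F_1)\equiv(K_2,F_2)$ as $L^f$-structures, hence also as $L^P$-structures, proving completeness of $\ACF_T$.
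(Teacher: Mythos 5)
Your overall strategy --- pass to $\kappa$-saturated extensions, use \cref{saturation P,saturation trdeg}, and then run a back-and-forth between small $L^f$-substructures whose $P$-restrictions are partial $L$-elementary maps --- is a legitimate alternative to the paper's argument, which instead (possibly after moving to a forcing extension to obtain saturated models of a fixed cardinality) builds a \emph{single global} $L$-isomorphism $\sigma_0\colon P_M\to P_N$ via uniqueness of saturated models of a complete theory, and then extends $\sigma_0$ to a field isomorphism of $M\to N$ in one stroke by mapping a transcendence basis, with $P$-preservation automatic from \cref{isomorphism}. Your route avoids the absoluteness/forcing step, which is a genuine advantage, but it trades it for a delicate extension lemma that you do not actually establish.

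The gap is concentrated exactly where you flag ``the main obstacle,'' but your proposed fix misidentifies where the difficulty lives. Iterating to ``absorb the new $F_1$-intersections'' handles the \emph{domain} side (this is just passing to the D-closure $\langle A,a\rangle_D$, which is countably generated and makes $A(\bar p,a)$ linearly disjoint from $F_1$ over its $P$-part). But membership in $\mathcal{F}$ requires the \emph{image} to be an $L^f$-substructure as well, i.e.\ the range must satisfy $B'\forkindep^l_{P_{B'}}F_2$ with $P_{B'}$ exactly the image of $P_{A'}$. When you ``send $a$ to a suitable root'' of the transported minimal polynomial, nothing in the proposal prevents that root from creating unwanted elements of $F_2$ --- equivalently, nothing guarantees the transported polynomial stays irreducible over $B.F_2$. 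Fixing this requires an additional idea: enlarge the $P$-part of the domain to the relative algebraic closure (and, in characteristic $p$, a separably closed-in-$F_1$ version) of $P_A(\bar p)$ inside $F_1$; use the $L$-elementariness of the partial map to transfer relative algebraic closedness/separability to the image in $F_2$; and then invoke something like the Fried--Jarden fact underlying \cref{regular algebraically independent} to pass from algebraic disjointness to linear disjointness on the image side. Alternatively one can iterate the D-closure on \emph{both} sides and argue termination after $\omega$ steps. Neither of these is spelled out, and ``until the substructure closes up'' does not by itself justify that the resulting partial map lies in $\mathcal{F}$. As written the forth step does not go through, so the proof has a real hole even though the plan is salvageable.
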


\begin{proof}
    It is enough to show that if $M,N\vDash \ACF_T$ are saturated models of the same cardinality $\kappa$, then they are isomorphic (see the discussion above the proposition).
    By \cref{saturation P}, $P_M,P_N\vDash T$ are $\kappa$-saturated, and in particular $\abs{P_M}=\abs{P_N}=\kappa$.
    Because $T$ is complete, \cite[Theorem 5.1.13]{chang1990model} implies that there is an $L$-isomorphism $\sigma_0:P_M\to P_N$.
    By \cref{saturation trdeg}, $\trdeg(M/P_M)=\trdeg(N/P_N)=\kappa$.
    Let $S\subseteq M$, $S'\subseteq N$ be transcendence basis over $P_M,P_N$ respectively, $\abs{S}=\abs{S'}=\kappa$.
    We can extend $\sigma_0$ to an isomorphism of fields $\sigma_1:M\to N$, by mapping $S\mapsto S'$ and extending to the algebraic closure arbitrarily.
    The restriction $\sigma_1|_{P_M}$ is an $L$-isomorphism, so by \cref{isomorphism} $\sigma_1$ is an $L^P$-isomorphism.
\end{proof}

\subsection{Quantifier elimination}
Our proof of quantifier elimination will be essentially the same as Delon's \cite[Proposition 14]{Delon2012}.
One difference is that the criterion used by Delon to prove quantifier elimination assumes a countable language, so we will need a slightly generalized criterion.

 In \cite{HKR18}, Hils, Kamensky and Rideau proved the same result in a similar fashion.
 Our proof was derived independently, as we were not aware of their work during the research. 

We will need the following fact, which follows from \cite[Theorem 8.4.1]{hodges1993model}.
\begin{fact} \label{criterion for qe}
  A theory $T$ has quantifier elimination iff for any two models $M,N\vDash T$ such that $N$ is $\abs{M}^+$-saturated and any substructures $A\subseteq M$ and $A'\subseteq N$ with an isomorphism $\sigma:A\to A'$, $\sigma$ can be extended to an embedding $M\to N$.
\end{fact}

\begin{theorem} \label{quantifier elimination}
  If $T$ has quantifier elimination, then $\ACF_T^f$ has quantifier elimination.
\end{theorem}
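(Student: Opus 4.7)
The plan is to verify the criterion of \cref{criterion for qe}. Let $M, N \models \ACF_T^f$ with $N$ being $\abs{M}^+$-saturated, let $A \subseteq M$ and $A' \subseteq N$ be substructures, and let $\sigma : A \to A'$ be an $L^f$-isomorphism; I must extend $\sigma$ to an $L^f$-embedding $M \hookrightarrow N$. By \cref{field of fractions substructure}, I may assume $A$ and $A'$ are fields. By \cref{isomorphism}, $\sigma|_{P_A} : P_A \to P_{A'}$ is an $L$-isomorphism between $L$-substructures of $P_M, P_N \models T$. Since $P_N$ is $\abs{P_M}^+$-saturated by \cref{saturation P} and $T$ has quantifier elimination, I can apply \cref{criterion for qe} to $T$ to extend $\sigma|_{P_A}$ to an $L$-embedding $\tau : P_M \hookrightarrow P_N$. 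Because $T$ has quantifier elimination, $\tau$ is elementary, so by \cref{elementary extension is regular}, $\tau(P_M) \subseteq P_N$ is a regular extension.

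Next I will amalgamate $\sigma$ and $\tau$: using $A \forkindep^l_{P_A} P_M$ (given by \cref{substructure}) and $A' \forkindep^l_{P_{A'}} \tau(P_M)$ (obtained from $A' \forkindep^l_{P_{A'}} P_N$ by monotonicity), \cref{isomorphism amalgamation} produces a field isomorphism $F : A.P_M \to A'.\tau(P_M)$ extending both maps. Pick a transcendence basis $S$ of $M$ over $A.P_M$; by \cref{saturation trdeg}, $\trdeg(N/P_N) \geq \abs{M}^+$, and since $\trdeg(A'.P_N/P_N) \leq \abs{A'} \leq \abs{M}$, there is enough room to select $S' \subseteq N$ algebraically independent over $A'.P_N$ with $\abs{S'} = \abs{S}$. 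Extending $F$ by a bijection $S \to S'$ and then to algebraic closures yields a field isomorphism $\tilde{F} : M \to M'$, where $M' := \overline{A'.\tau(P_M)(S')} \subseteq N$.

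The main obstacle will be showing that $\tilde{F}$ is an $L^f$-embedding, which by \cref{isomorphism} and \cref{submodel} reduces to establishing $M' \forkindep^l_{\tau(P_M)} P_N$ (this linear disjointness additionally forces $M' \cap P_N = \tau(P_M)$, since a nonzero $x \in M' \cap P_N \setminus \tau(P_M)$ would make $\{1, x\}$ linearly dependent over $P_N$ but independent over $\tau(P_M)$). To prove the disjointness, let $T'$ be a transcendence basis of $A'$ over $P_{A'}$. From $A' \forkindep^l_{P_{A'}} P_N$ it follows that $T'$ is algebraically independent over $P_N$, and combined with the choice of $S'$ the set $T' \cup S'$ is algebraically independent over $P_N$. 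Since $A' \subseteq \overline{P_{A'}(T')} \subseteq \overline{\tau(P_M)(T')}$, we have $M' = \overline{\tau(P_M)(T' \cup S')}$. Applying \cref{regular algebraically independent} to the regular extension $\tau(P_M) \subseteq P_N$ and the algebraically independent set $T' \cup S'$ over $P_N$ yields $M' \forkindep^l_{\tau(P_M)} P_N$, completing the proof.
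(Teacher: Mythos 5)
Your proof is correct and follows the paper's argument essentially step for step: reduce to subfield substructures via \cref{field of fractions substructure}, extend $\sigma|_{P_A}$ to an embedding $P_M\hookrightarrow P_N$ using \cref{criterion for qe} applied to $T$, amalgamate with $\sigma$ via \cref{isomorphism amalgamation}, match transcendence bases using \cref{saturation trdeg}, and verify that the resulting field isomorphism is an $L^f$-embedding via \cref{substructure} and \cref{isomorphism}. The only difference is expository: your final paragraph, which rewrites $M'$ as $\overline{\tau(P_M)(T'\cup S')}$ for a transcendence basis $T'$ of $A'$ over $P_{A'}$ so that \cref{regular algebraically independent} applies directly, spells out in full a linear-disjointness step that the paper compresses into a single line.
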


\begin{proof}
  Let $M,N\vDash \ACF_T^f$ such that $N$ is $\abs{M}^+$-saturated.
  Let $A\subseteq M$, $A'\subseteq N$ be $L^f$-substructures with isomorphism $\sigma:A\to A'$.
  By \cref{field of fractions substructure}, $\fr(A)\subseteq M$, $\fr(A')\subseteq N$ are $L^f$-substructures with $P_{\fr(A)}=P_A$, $P_{\fr(A')}=P_{A'}$.
  We can extend $\sigma$ to an isomorphism of fields $\fr(A)\to \fr(A')$ that will have the same restriction $P_A\to P_{A'}$, and so by \cref{isomorphism} would still be an $L^f$-isomorphism.
  Thus, we can assume without loss of generality that $A$ and $A'$ are subfields.
  By \ref{saturation P}, $P_N$ is $\abs{M}^+$-saturated, and in particular $\abs{P_M}^+$-saturated.
  The restriction $\sigma|_{P_A}:P_A\to P_{A'}$ is an isomorphism of $L$-structures from \cref{isomorphism}, so quantifier elimination and \cref{criterion for qe} imply that we can extend $\sigma|_{P_A}$ to an embedding $\sigma_0:P_M\to P_N$.

  Let $B=\sigma_0(P_M)\subseteq P_N$.
  By \cref{substructure}, $A\forkindep^l_{P_A} P_M$ and $A'\forkindep^l_{P_{A'}} P_N$, in particular by monotonicity $A'\forkindep^l_{P_{A'}} B$.
  The field isomorphisms $\sigma:A\to A'$ and $\sigma_0:P_M\to B$ both restrict to the same isomorphism $P_A\to P_{A'}$, so there is a unique field isomorphism $\sigma_1:A.P_M\to A'.B$ such that $\sigma_1|_A=\sigma$, $\sigma_1|_{P_M}=\sigma_0$, by \cref{isomorphism amalgamation}.

  Let $S\subseteq M$ be a transcendental basis of $M$ over $A.P_M$, $\abs{S}\le\abs{M}$.
  From \cref{saturation trdeg} $\trdeg(N/P_N)\ge \abs{M}^+$ and $\abs{A'}=\abs{A}\le \abs{M}$, so there exists $S'\subseteq N$ algebraically independent over $A'.P_N$ with $\abs{S}=\abs{S'}$.
  Let $M'=\overline{A'.B(S')}\subseteq N$.
  Quantifier elimination implies that the substructure $B\subseteq P_N$ is elementary, so by \cref{elementary extension is regular} $B\subseteq P_N$ is regular.
  We also know that $A'\forkindep^l_{P_{A'}} P_N$, so by base monotonicity $A'.B\forkindep^l_{B} P_N$ and by \cref{regular algebraically independent} $\overline{A'.B(S')}\forkindep^l_B P_N$, where $\overline{A'.B(S')}=M'$.
  Thus, $M'\subseteq N$ is a substructure, with $P_{M'}=B$, from \cref{substructure}.

  We also have $M=\overline{A.P_M(S)}$, so we can extend $\sigma_1:AP_M\to A'B$ to $\sigma_2:M\to M'$ by mapping $S\mapsto S'$ arbitrarily and extending to the algebraic closure.
  In particular, $\sigma_2(P_M)=B=P_{M'}$ and $\sigma_2|_{P_M}=\sigma_0$ is an isomorphism of $L$-structures, so $\sigma_2$ is an isomorphism of $L^{f}$-structures by \cref{isomorphism}.
  Thus, $\sigma_2$ is an embedding of $M$ into $N$ that extends $\sigma$.
\end{proof}

\begin{corollary}[{\cite[Therorem 1]{Delon2012}}]
    $\ACF_\ACF^f$ eliminates quantifiers.
\end{corollary}

\begin{corollary}
    $\ACF_{\mathrm{RCF}}^f$ eliminates quantifiers, where RCF is the theory of real closed fields in the language $L_{\text{rings}}\cup\set{\le}$.
\end{corollary}

\begin{corollary}
    Let ACVF be the theory of algebraically closed valued fields in the divisibility language, that is the language of rings with a binary relation $x|y$ signifying $v(x)<v(y)$.
    ACVF eliminates quantifiers, so $\ACF_\mathrm{ACVF}^f$ eliminates quantifiers (by \cref{acvf nip} it is also NIP).
\end{corollary}

From quantifier elimination, we can deduce a couple of important corollaries.
Both corollaries will rely on expanding a theory $T$ to the Morleyzation, which has quantifier elimination, as defined below.

\begin{definition}
  For a theory $T$, the \emph{Morleyzation} $T_\mathrm{Mor}$ of $T$ is an expansion of $T$ by relations $R_\mathbb{\psi}(x)$ for any $\psi(x)\in L$, such that $T_{\mathrm{Mor}}\vdash\forall x(R_\psi(x)\leftrightarrow \psi(x))$.
  
\end{definition}

\begin{corollary} \label{bounded}
  Every formula $\phi(x)\in L^P$ is equivalent modulo $\ACF_T$ to a bounded formula, that is a formula where every quantifier is over $P$ (see \cref{def bounded}).
\end{corollary}

\begin{proof}
    Consider the Morleyzation $T_{\mathrm{Mor}}$ and the theory $\ACF_{T_\mathrm{Mor}}^f$ which has quantifier elimination by \cref{quantifier elimination}.
    In particular, $\phi(x)$ is equivalent to a quantifier free formula $\phi_0(x)\in L_\mathrm{Mor}^f$ modulo $\ACF_{T_\mathrm{Mor}}^f$.
    Replace all occurrences of $l_n$, $f_{n,i}$ in $\phi_0(x)$ with the formulas defining them, to get an equivalent formula $\phi_1(x)\in L_\mathrm{Mor}^P$.
    The formulas defining $l_n$, $f_{n;i}$ are bounded, so $\phi_1(x)$ is bounded.
    
    For any formula $\psi(y)\in L$ consider the bounded formula $\psi^P(y)\in L^P$ created from \cref{corresponding formula}.
    The axioms of $\ACF_{T_\mathrm{Mor}}$ (\cref{delons language}) imply that $\ACF_{T_\mathrm{Mor}}\vdash \forall y (R_{\psi}(y)\leftrightarrow \psi^P(y))$.
    Replace each predicate $R_\psi(y)$ in $\phi_1(x)$ by the corresponding $\psi^P(y)$, to get a bounded formula $\phi_2(x)\in L^P$ which is equivalent to $\phi(x)$ modulo $\ACF_T$.
\end{proof}

\begin{remarkcnt}
    In that case that $L$ is the language of rings, \cref{bounded} follows from \cite[Proposition 2.1]{Casanovas2001stable}, because ACF has \emph{nfcp} and $P_M$ is small in any model $M\vDash \ACF_T$ (as witnessed in a saturated extension, by \cref{saturation trdeg}).
\end{remarkcnt}

\begin{corollary} \label{elementary map}
  Let $M,N\vDash \ACF_T^f$ and let $A\subseteq M$, $B\subseteq N$ be substructures.
  Then $\sigma:A\to B$ is a partial elementary map from $M$ to $N$ iff $\sigma: A\to B$ is an isomorphism of rings such that $\sigma(P_A)=P_B$ and $\sigma|_{P_A}:P_A\to P_B$ is a partial elementary map from $P_M$ to $P_N$.
\end{corollary}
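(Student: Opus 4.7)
My plan is to treat the two directions separately, with the forward direction being essentially a direct unpacking of the definition and the backward direction relying on the Morleyzation trick already used in Corollary \ref{P-formula}.

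For the forward direction, assume $\sigma$ is a partial elementary map from $M$ to $N$. Then $\sigma$ preserves all quantifier-free $L^f$-formulas, so it is in particular an $L^f$-isomorphism between $A$ and $B$; applying \cref{isomorphism} this gives that $\sigma$ is a ring isomorphism with $\sigma(P_A)=P_B$ and that $\sigma|_{P_A}$ is an $L$-isomorphism. For the elementarity of $\sigma|_{P_A}$, I would use the translation from \cref{corresponding formula}: for any $L$-formula $\psi(x)$ and $a\in P_A$, the corresponding $P$-formula $\psi^P(x)\in L^P\subseteq L^f$ satisfies $P_M\models\psi(a)\iff M\models\psi^P(a)$, and similarly for $N$, so partial elementarity of $\sigma$ transfers into partial elementarity of $\sigma|_{P_A}$.

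For the backward direction I would replay the argument of \cref{P-formula}. Pass to the Morleyzation $T_{\mathrm{Mor}}$, and expand $M$ and $N$ to models of $\ACF_{T_{\mathrm{Mor}}}^f$ by interpreting the new predicates as the formulas they name. Since $\sigma|_{P_A}:P_A\to P_B$ is partial elementary in $L$, it preserves every $L$-formula, hence preserves every atomic $L_{\mathrm{Mor}}$-formula, so $\sigma|_{P_A}$ is an $L_{\mathrm{Mor}}$-isomorphism between the induced substructures of $P_M$ and $P_N$. By \cref{isomorphism} applied in the expanded language, $\sigma$ is then an $L_{\mathrm{Mor}}^f$-isomorphism between $A$ and $B$ as $L_{\mathrm{Mor}}^f$-substructures. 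Since $T_{\mathrm{Mor}}$ has quantifier elimination, \cref{quantifier elimination} gives that $\ACF_{T_{\mathrm{Mor}}}^f$ also has quantifier elimination, and therefore every isomorphism of substructures is automatically a partial elementary map. Thus $\sigma$ is partial elementary in $L_{\mathrm{Mor}}^f$, and a fortiori in $L^f$, between $M$ and $N$.

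The only potentially delicate point is making sure the Morleyzation step is set up cleanly: one must check that the expansions of $M$ and $N$ to $\ACF_{T_{\mathrm{Mor}}}^f$ really do interpret the Morleyzation predicates consistently with the requirement that $R_\psi$ lives on $P$, but this is immediate from the axioms of \cref{delons language}, which force all non-ring relation symbols to be relations on $P$. Once that is in place, the rest is a straightforward concatenation of \cref{isomorphism}, \cref{corresponding formula}, and \cref{quantifier elimination}.
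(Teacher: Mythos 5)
Your proof is correct and follows essentially the same route as the paper: the forward direction via the $P$-formula translation of \cref{corresponding formula} and \cref{isomorphism}, and the backward direction via passing to the Morleyzation, applying \cref{isomorphism} in $L_{\mathrm{Mor}}^f$, and invoking quantifier elimination of $\ACF_{T_{\mathrm{Mor}}}^f$. You omit the paper's preliminary step of observing $P_M\equiv P_N$ and replacing $T$ by the complete theory $\Th(P_M)$, but this is harmless since quantifier elimination of $\ACF_{T_{\mathrm{Mor}}}^f$ alone (the language $L^f$ has constants, so every substructure contains the prime subring) already implies that isomorphisms of substructures are partial elementary maps.
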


\begin{proof}
  Suppose $\sigma:A\to B$ is a partial elementary map from $M$ to $N$ in $\ACF_T^f$.
  Then $\sigma$ is in particular an isomorphism, so $\sigma(P_A)=P_B$.
  The restriction $\sigma|_{P_A}$ is a partial elementary map from $P_M$ to $P_N$ in $T$, because for every formula $\phi(x)\in T$, we can apply \cref{corresponding formula} to get  $\phi^P(\bar{x})\in \ACF_T$, such that
  $\phi(P_B)=\phi^P(B)=\sigma(\phi^P(A))=\sigma(\phi(P_A))$.

  For the other direction, suppose $\sigma: A\to B$ is an isomorphism of rings such that $\sigma(P_A)=P_B$ and $\sigma|_{P_A}:P_A\to P_B$ is a partial elementary map from $P_M$ to $P_N$ in $T$.
  In particular, $P_M$ and $P_N$ have the same theory, so we can assume that $T$ is the complete theory $T=\mathrm{Th}(P_M)=\mathrm{Th}(P_N)$.
  Let $T_{\mathrm{Mor}}$ be the Morleyzation of $T$, $T_{\mathrm{Mor}}$ has quantifier elimination.
  We can expand the language of $P_M$ and $P_N$ by definable relations to get $P_M,P_N\vDash T_{\mathrm{Mor}}$.
  With this expanded language $M,N\vDash \ACF_{T_{\mathrm{Mor}}}^f$.
  The expansion is only relational, so we can still consider $A$ and $B$ as substructure.
  The restriction $\sigma|_{P_A}$ is a partial elementary map in $T$, so it is an isomorphism in $T_{\mathrm{Mor}}$, and thus by \cref{isomorphism} $\sigma$ is an isomorphism in $\ACF_{T_{\mathrm{Mor}}}^f$.
  By \cref{complete,quantifier elimination} $\ACF_{T_{\mathrm{Mor}}}^f$ is complete and eliminates quantifiers, so $\sigma$ is a partial elementary map in $\ACF_{T_{\mathrm{Mor}}}^f$.
  In particular, it is a partial elementary map in $\ACF_T^f$.
\end{proof}

Using this result on elementary maps, we can now show that \cref{saturation P,saturation trdeg} fully characterize the saturated models of $\ACF_T$.

\begin{proposition} \label{saturated}
  Suppose $\kappa>\abs{L}$, then $N\vDash \ACF_T$ is $\kappa$-saturated iff $P_N\vDash T$ is $\kappa$-saturated and $\trdeg(N/P_N)\ge\kappa$
\end{proposition}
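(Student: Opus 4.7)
The forward direction is \cref{saturation P,saturation trdeg}, so the task is the converse: assume $P_N$ is $\kappa$-saturated and $\trdeg(N/P_N)\ge\kappa$, and show any $1$-type $p(x)$ over $A\subseteq N$ with $\abs{A}<\kappa$ is realized in $N$. Passing to the Morleyization of $T$ (still with $\kappa$ beating the cardinality of the expanded language), I may assume by \cref{quantifier elimination} that $\ACF_T^f$ has quantifier elimination, and by \cref{field of fractions substructure} that $A$ is a subfield and an $L^f$-substructure of $N$.

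I would realize $p$ by some $b$ in an elementary extension $N'\succ N$ and let $B\subseteq N'$ be the $L^f$-substructure generated by $A\cup\set{b}$, taken to be a subfield; then $P_B:=B\cap P_{N'}$ has size $<\kappa$ and satisfies $B\forkindep^l_{P_B} P_{N'}$ by \cref{substructure}. Note also that $P_N\prec P_{N'}$ as $L$-structures by \cref{elementary map}. The goal is to construct an $L^f$-embedding $\sigma\colon B\hookrightarrow N$ fixing $A$: by quantifier elimination, $\sigma(b)$ then realizes $p$.

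The construction of $\sigma$ has three steps. First, I use $\kappa$-saturation of $P_N$ to realize $\tp^L(P_B/P_A)$ in $P_N$, obtaining an $L$-elementary embedding $\sigma_0\colon P_B\to P_N$ fixing $P_A$; letting $Q=\sigma_0(P_B)$, \cref{elementary extension is regular} yields that $Q\subseteq P_N$ is regular. Second, I amalgamate $\sigma_0$ with $\mathrm{id}_A$ via \cref{isomorphism amalgamation} (using $A\forkindep^l_{P_A} P_B$ in $N'$ and $A\forkindep^l_{P_A} Q$ in $N$, both by monotonicity from the substructure property) to extend $\sigma_0$ to a field isomorphism $\sigma_1\colon A.P_B\to A.Q$. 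Third, I pick a transcendence basis $S$ of $B$ over $A.P_B$; using $\trdeg(N/A.P_N)\ge\kappa$ (which follows from $\trdeg(N/P_N)\ge\kappa$ and $\trdeg(A.P_N/P_N)\le\abs{A}<\kappa$), I pick $S'\subseteq N$ algebraically independent over $A.P_N$ with $\abs{S'}=\abs{S}$, extend $\sigma_1$ via a bijection $S\to S'$, and then extend to algebraic closures to produce a field isomorphism $\sigma\colon B\to\sigma(B)\subseteq\overline{A.Q(S')}\subseteq N$.

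The main obstacle is showing that $\sigma$ is an $L^f$-embedding, which by \cref{isomorphism} reduces to $\sigma(B)\cap P_N=Q$, for which it suffices to prove $\overline{A.Q(S')}\cap P_N\subseteq Q$. For $\alpha\in P_N$ algebraic over $A.Q(S')$ I plan three reductions: (i) clearing denominators in a polynomial annihilating $\alpha$ and viewing it as a polynomial in $\vec s\in S'$ with coefficients in $A.P_N$ (since $\alpha\in P_N$), algebraic independence of $S'$ over $A.P_N$ forces every coefficient to vanish, leaving a nonzero polynomial over $A.Q$ annihilating $\alpha$, so $\alpha$ is algebraic over $A.Q$; (ii) expanding that polynomial's coefficients in a fixed $P_A$-basis of $A$ and using that $A\forkindep^l_{P_A} P_N$ promotes $P_A$-linear independence to $P_N$-linear independence, I extract a nonzero polynomial over $Q$ annihilating $\alpha$, so $\alpha$ is algebraic over $Q$; (iii) regularity of $Q\subseteq P_N$ then forces $\alpha\in Q$. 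These three reductions are where, respectively, $\trdeg(N/P_N)\ge\kappa$, the substructure condition $A\forkindep^l_{P_A} P_N$, and the $\kappa$-saturation of $P_N$ genuinely enter, and they form the technical core of the argument.
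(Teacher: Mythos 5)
Your overall strategy is sound and genuinely different from the paper's. The paper proves the converse by separately establishing $\kappa$-homogeneity and $\kappa^+$-universality of $N$, both routed through the corresponding properties of $P_N$; you instead realize a type directly by building an $L^f$-embedding of a small substructure into $N$. In the universality half, the paper embeds a whole small \emph{model} $M\models\ACF_T$, which is the detail that makes your route harder to close.

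The gap is in how you obtain regularity of $Q\subseteq P_N$. You invoke \cref{elementary extension is regular}, but that lemma requires $Q\prec P_N$ as $L$-structures, and your $\sigma_0$ does not give this. Realizing $\tp^L(P_B/P_A)$ in $P_N$ produces a \emph{partial elementary map of $P_{N'}$ into $P_N$} over $P_A$: for $\bar c\in P_B$ one gets $P_{N'}\models\phi(\bar c,\bar a)\iff P_N\models\phi(\sigma_0(\bar c),\bar a)$. This is not the same as $P_B\models\phi(\bar c)\iff P_N\models\phi(\sigma_0(\bar c))$, which is what would make $Q\prec P_N$ and what you would need to cite \cref{elementary extension is regular}. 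Since $P_B$ is just the trace on $P$ of the $L^f$-substructure generated by $Ab$, it is generally not a model of $T$, so $Q$ is not an elementary substructure, and regularity of $Q\subseteq P_N$ does not come for free. This is precisely the step your reduction (iii) hinges on.

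There is also a secondary gap: you claim that showing $\sigma(B)\cap P_N=Q$ suffices by \cref{isomorphism}, but \cref{isomorphism} presupposes that $\sigma(B)\subseteq N$ is already an $L^f$-substructure, which by \cref{substructure} requires the stronger $\sigma(B)\forkindep^l_{Q}P_N$, not merely the intersection condition.

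Both gaps are fixable along your lines. Take $B=\acl_{L^P}(Ab)$ in $N'$ rather than the generated substructure: then $B$ is algebraically closed as a field, so $\overline{P_B}\subseteq B$, and since $B\forkindep^l_{P_B}P_{N'}$ (\cref{substructure}), monotonicity gives $\overline{P_B}\forkindep^l_{P_B}P_{N'}$, i.e.\ $P_B\subseteq P_{N'}$ is regular. Regularity is preserved by the partial elementary map $\sigma_0$ because it is characterized by preservation of the formulas $\mathrm{Sol}_n,D_{n,p}$ (\cref{RAC and separability predicates}), so $Q\subseteq P_N$ is regular. From there, $A.Q\forkindep^l_QP_N$ (base monotonicity from $A\forkindep^l_{P_A}P_N$), then $\overline{A.Q}\forkindep^l_Q P_N$ by \cite[Lemma~2.6.7]{fried2008field} using regularity, and finally $\overline{A.Q(S')}\forkindep^l_{\overline{A.Q}}\overline{A.Q}.P_N$ by \cref{regular algebraically independent}; transitivity (and symmetry) of $\forkindep^l$ then gives $\overline{A.Q(S')}\forkindep^l_Q P_N$, which by \cref{substructure} handles both the substructure condition and, via monotonicity, $\sigma(B)\cap P_N=Q$ all at once. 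The alternative, closer to the paper, is to choose a small elementary submodel $M'\prec N'$ containing $A\cup\{b\}$ and realize $\tp(P_{M'}/P_A)$ instead — then $Q\prec P_N$ really does hold, and \cref{elementary extension is regular} applies as you intended.
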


\begin{proof}
  The first direction, if $N\vDash \ACF_T$ is $\kappa$-saturated, then $P_N\vDash T$ is $\kappa$-saturated and $\trdeg(N/P_N)\ge\kappa$, is proved in \cref{saturation P,saturation trdeg}.
  For the other direction, we will prove $\kappa$-homogeneity and $\kappa^+$-universality.
  By expanding the language with definable relations and functions, we can assume $N\vDash \ACF_T^f$.
  Let $A,B\subseteq N$ and let $\sigma:A\to B$ be a partial elementary map in $N$ with $\sigma(A)=B$, such that $\abs{A}=\abs{B}<\kappa$.
  Without loss of generality, we can assume that $A,B\subseteq N$ are $L^f$-substructures, and by \cref{field of fractions substructure} we can also assume they are subfields.
  \cref{elementary map} implies that $\sigma|_{P_A}:P_A\to P_B$ is a partial elementary map in $P_N$.
  We know that $P_N$ is $\kappa$-homogeneous and $\abs{P_A}=\abs{P_B}<\kappa$, so we can extend $\sigma|_{P_A}$ to an automorphism $\sigma_0:P_N\to P_N$ in $T$.

   We have $A\forkindep^l_{P_A} P_N$ and $B\forkindep^l_{P_B} P_N$ from \cref{substructure}, and the field isomorphisms $\sigma$ and $\sigma_0$ restrict to the same isomorphism $P_A\to P_B$, so by \cref{isomorphism amalgamation} they can be jointly extended to an isomorphism of fields $\sigma_1:A.P_N\to B.P_N$.
  From \cref{extend to automorphism}, $\sigma_1$ can be extended to an automorphism of fields $\sigma_2:N\to N$.
  \cref{isomorphism} implies that $\sigma_2$ is an $L^f$ automorphism because $\sigma_2|_{P_N}=\sigma_0$ is an automorphism in $T$, and $\sigma_2$ extends $\sigma$ as needed.

  Now Let $M\models\ACF_T$ with $\abs{M}\le\kappa$, by expanding the language we can assume $M\vDash \ACF_T^f$.
  We have $P_M\vDash T$ with $\abs{P_M}<\kappa$, so by $\kappa^+$-universality of $P_N$ there exists an elementary embedding $\tau_0:P_M\to P_N$.
  Let $B=\tau_0(P_M)$.
  We have $B\preceq P_N$, and in particular from \cref{elementary extension is regular} $B\subseteq P_N$ is a regular extension.
  Let $S$ be a transcendental basis of $M$ over $P_M$, $\abs{S}\le \kappa$ and $\trdeg(N/P_N)\ge \kappa$, so there exists $S_0\subseteq N$ algebraically independent over $P_N$ with $\abs{S_0}=\abs{S}$.
  We can extend $\tau_0$ to an embedding $\tau_1:M\to N$ by mapping $S\mapsto S_0$ arbitrarily and extending to the algebraic closure.
  Let $M_0=\tau_1(M)=\overline{B(S_0)}$.
  From \cref{regular algebraically independent}, $\overline{B(S_0)}\forkindep^l_B P_N$, so by \cref{submodel} $M_0\subseteq N$ is an $L^f$-substructure with $P_{M_0}=B$.
  We have that $\tau_1:M\to M_0$ is an isomorphism of fields with $\tau_1|_{P_M}=\tau_0:P_M\to P_{M_0}$ an elementary embedding, so by \cref{elementary map} $\tau_1$ is an elementary embedding.
\end{proof}

\subsection{Model completeness}
In \cite[Corollary 15]{Delon2012}, Delon proved that $\ACF_\ACF^{ld}$ is model complete.
We can show that if $T$ is model complete, then $\ACF_T^{ld}$ is model complete, but in fact we only need a weaker condition --- that regular extensions in $T$ are elementary.

\begin{theorem} \label{model completeness}
  The following are equivalent:
  \begin{enumerate}
    \item $\ACF_T^f$ is model complete.
    \item $\ACF_T^{ld}$ is model complete.
    \item For any $Q,R\vDash T$ such that $Q\subseteq R$ is a substructure, if $Q\subseteq R$ is a regular extension, then $Q\preceq R$.
    \item $T_{\mathrm{reg}}$ (\cref{language of regular extensions}) is model complete.
  \end{enumerate}
\end{theorem}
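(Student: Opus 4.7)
The plan is to split the equivalences into two essentially definitional ones, $(1) \Leftrightarrow (2)$ and $(3) \Leftrightarrow (4)$, and then close the loop by proving $(1) \Rightarrow (3)$ and $(4) \Rightarrow (1)$.

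The equivalence $(1) \Leftrightarrow (2)$ is immediate from \cref{submodel}: the $L^f$- and $L^{ld}$-substructure relations between models of $\ACF_T$ coincide, and since the functions $f_{n,i}$ are definable from $l_n$, $P$ and the ring operations, $\ACF_T^f$ is a definitional expansion of $\ACF_T^{ld}$, so the two model-completeness statements carry the same information. For $(3) \Leftrightarrow (4)$, \cref{substructure iff regular} identifies the $L_{\mathrm{reg}}$-substructure relation between models of $T$ with the regular-extension relation, which is exactly what is needed.

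For $(1) \Rightarrow (3)$, I will produce, from any regular extension $Q \subseteq R$ of models of $T$, a pair $M \subseteq N$ in $\ACF_T^f$ with $P_M = Q$ and $P_N = R$. Take an infinite set $S$ algebraically independent over $R$ and set $N := \overline{R(S)}$, $M := \overline{Q(S)}$. Regularity of $Q \subseteq R$ combined with \cref{regular algebraically independent} gives $M \forkindep^l_Q R$; because $\{1,a\}$ would be $Q$-linearly independent but $R$-linearly dependent for any $a \in (M \cap R) \setminus Q$, linear disjointness forces $M \cap R = Q$. Interpreting $P$ as $R$ in $N$ and as $Q$ in $M$, the hypotheses of \cref{submodel} are satisfied, and the infinite transcendence of $S$ gives $M, N \models \ACF_T^f$. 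Then $(1)$ yields $M \prec N$ in $L^f$, and applying this to the $P$-relativization $\varphi^P$ of any $L$-formula $\varphi$ (\cref{corresponding formula}) delivers $Q \prec R$ in $L$.

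For $(4) \Rightarrow (1)$, let $M \subseteq N$ be an $L^f$-substructure with both models of $\ACF_T^f$. By \cref{submodel}, $M \forkindep^l_{P_M} P_N$, and since $\overline{P_M} \subseteq M$ monotonicity yields $\overline{P_M} \forkindep^l_{P_M} P_N$, so $P_M \subseteq P_N$ is regular. By \cref{substructure iff regular} and $(4)$, $P_M \prec P_N$ in $L_{\mathrm{reg}}$. To propagate elementarity to $M$ and $N$, pass to $T^{*} := (T_{\mathrm{reg}})_{\mathrm{Mor}}$, which has QE: expanding $P_M$ and $P_N$ by the defined predicates preserves the elementarity $P_M \prec P_N$, and by \cref{quantifier elimination} $\ACF_{T^{*}}^f$ has QE. The substructure conditions of \cref{submodel} still hold in the expanded language, so $M \prec N$ in $\ACF_{T^{*}}^f$, and taking reducts gives $M \prec N$ in $\ACF_T^f$. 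The main obstacle here is that \cref{quantifier elimination} needs QE of the base theory, whereas $(4)$ only supplies model completeness of $T_{\mathrm{reg}}$; Morleyization is the standard trick to bridge that gap, and it works cleanly because expanding by definable predicates preserves the elementarity of $P_M$ in $P_N$ that we just established.
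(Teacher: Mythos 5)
Your proof is correct and follows essentially the same route as the paper: the equivalences $(1)\Leftrightarrow(2)$ and $(3)\Leftrightarrow(4)$ rest on \cref{submodel} and \cref{substructure iff regular} exactly as in the paper, the construction $M=\overline{Q(S)},\ N=\overline{R(S)}$ for $(1)\Rightarrow(3)$ is the paper's construction (with infinitely many transcendentals in place of one), and your Morleyization argument in $(4)\Rightarrow(1)$ simply inlines the content of \cref{elementary map}, which the paper invokes by name. The only difference is organizational — the paper closes a cycle $1\Rightarrow 2\Rightarrow 3\Rightarrow 1$ plus $3\Leftrightarrow 4$, while you prove the two biconditionals and two cross-implications — but the mathematical substance is identical.
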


\begin{proof}
  $1\implies 2$:
  Let $M,N\vDash \ACF_T^{ld}$ with $M\subseteq N$ an $L^{ld}$-substructure.
  We can expand $M$ and $N$ uniquely to models of $\ACF_T^f$, by \cref{submodel} $M\subseteq N$ is an $L^f$-substructure.
  $\ACF_T^f$ is model complete, so $M\preceq N$ in $L^f$, in particular $M\preceq N$ in $L^{ld}$.

  $2\implies 3$:
  Let $Q,R\vDash T$ with $Q\subseteq R$ a regular extension.
  We will construct $M,N\vDash \ACF_T^{ld}$ such that $P_M=Q$, $P_N=R$ and $M\subseteq N$.
  We would have liked to take $M=\overline{Q}$, but then we may have $[M:Q]<\infty$, so we should make $M$ a bit larger.
  Let $s$ be a new element, transcendental over $R$.
  The subfield $Q\subseteq R$ is regular, so by \cref{regular algebraically independent} $\overline{Q(s)}\forkindep^l_Q R$.
  Define $M=\overline{Q(s)}$, $Q\subseteq M$ is not an algebraic extension so in particular $[M:Q]=\infty$.
  We have $M\vDash \ACF_T^{ld}$, where we define $P_M=Q$.
  Similarly, define $N=\overline{R(s)}$, $N\vDash \ACF_T^{ld}$ with $P_N=R$.
  We know that $P_M\subseteq P_N$ is an $L$-substructure and $M\forkindep^l_{P_M} P_N$, so by \cref{submodel} $M\subseteq N$ is an $L^{ld}$-substructure.
  Model completeness implies $M\preceq N$, and in particular $P_M\preceq P_N$, because for every formula $\phi(\bar{x})\in L$ we have $P_M\vDash \phi(\bar{a})\iff M\vDash \phi^P(\bar{a})\iff N\models\phi^P(\bar{a})\iff P_N\vDash \phi(\bar{a})$ for every $\bar{a}\in P_M$, where $\phi^P$ is given by \cref{corresponding formula}.
  
  $3\implies 4$:
  Let $Q,R\vDash T_\mathrm{reg}$ be such that $Q\subseteq R$ is an $L_\mathrm{reg}$-extension.
  By \cref{substructure iff regular}, $Q\subseteq R$ is a regular field extension, so $Q\preceq R$ in $L$ by assumption.
  Because $L_\mathrm{reg}$ is an expansion by definable relations, $Q\preceq R$ also in $L_\mathrm{reg}$.

  $4\implies 1$:
  Let $M,N\vDash \ACF_T^f$ and suppose $M\subseteq N$ is a substructure.
  \cref{submodel} implies that $P_M\subseteq P_N$ is an $L$-substructure and $M\forkindep^l_{P_M} P_N$.
  However, $M$ is algebraically closed, so by monotonicity $\overline{P_M}\forkindep^l_{P_M} P_N$, that is $P_M\subseteq P_N$ is a regular extension.
  Extending $P_M$ and $P_N$ to models $T_\mathrm{reg}$, we see by \cref{substructure iff regular} that $P_M\subseteq P_N$ is an $L_\mathrm{reg}$-extension, so $P_M\preceq P_N$ by assumption.
  The inclusion map $M\to N$ restricts to the elementary inclusion $P_M\to P_N$, so by \cref{elementary map}, $M\preceq N$.
\end{proof}

\begin{corollary}[{\cite[Corollary 15]{Delon2012}}] $\ACF_\ACF^{ld}$ is model complete.
\end{corollary}

\begin{corollary}
    $\ACF_{\mathrm{PSF}}^{ld}$ is model complete, where PSF is the theory of pseudo-finite fields in the language of rings (see \cref{psf model complete} for a proof).
\end{corollary}

\begin{remarkcnt}
    $\ACF_\ACF$ is \emph{not} model complete.
    By \cite[page 207]{tent_ziegler_2012}, the pregeometry of  an algebraically closed field $K$ of transcendence degree at least 4 over its prime field with algebraic independence is not modular: there are algebraically closed subfields $A,B\subseteq K$ such that $A\not\forkindep_{A\cap B}^\ACF B$.
    Define
    \begin{align*}
        M&=A & N&=K\\
        P_M&=A\cap B & P_N&=B.
    \end{align*}
    It is clear that $M\subseteq N$ is an $L^P$-substructure, however if $M\preceq N$, then \cref{submodel} would imply that $A\forkindep^l_{A\cap B} B$, and in particular $A\forkindep^\ACF_{A\cap B} B$, a contradiction.
\end{remarkcnt}

\section{Classification and independence}\label{subfield classification and independence}
In this section we will assume that $T$ is complete (\cref{complete} implies that $\ACF_T$ is also complete) and we will work inside a monster model $\mathbb{M}\vDash \ACF_T$.
Denote $P:=P_\M$.

Assuming $T$ is $\NSOP_1$, we will define an independence relation $\forkindep^*$ on $\M$ and prove that it implies Kim-dividing (in fact, Kim$^u$ dividing, see \cref{def kim u dividing}) 
With this result, we will prove that $\ACF_T$ is $\NSOP_1$ and that under certain conditions $\forkindep^*$ is the Kim-independence. 
We will then expand this result to simplicity and stability.

We will also prove that stability lifts from $T$ to $\ACF_T$ using a different approach, by counting types.
This approach will let us extend the result to $\lambda$-stability.

Finally, we will prove that NIP lifts from $T$ to $\ACF_T$,

\subsection{Kim-dividing}

\begin{definition}
  Call a subfield $A\subseteq \M$ \emph{D-closed} (D for Delon's language) if it is closed under the functions $f_{n,i}$, or equivalently if $A\forkindep^l_{P_A} P$.
  For a set $B\subseteq \M$, denote by $\vect{B}_D$ the D-closure of $B$, that is the smallest field containing $B$ and closed under $f_{n,i}$.
\end{definition}

\begin{remarkcnt}\label{shorter local character}
We have the following remarks on D-closure:
\begin{itemize}
    \item In \cite[Definition 3.1]{MARTIN_PIZARRO_2020}, the condition D-closed was called $P$-special.
    \item If $A\subseteq \M$ is definably closed in $L^P$, then it is D-closed.
    In particular, for every $A\subseteq \M$, $\dcl(A)$ and $\acl(A)$ are D-closed.
    \item D-closure gives a shorter proof of local character of $\forkindep^l$ (see \cref{linear disjointedness properties}).
    Suppose $a$ is finite and $P$ is an infinite field.
    Let $A=\vect{a}_D$ be the D-closure of $a$ inside the pair of fields $(P(a), P)$.
    Consider $P_A=P \cap A$, which is countable.
    We have $P_A(a)\subseteq A$, so by monotonicity $P_A(a)\forkindep^l_{P_A} P$.
\end{itemize}
\end{remarkcnt}

\begin{lemma} \label{linear disjointedness AP}
  Suppose $A, B, C\subseteq \M$ are subfields with $C\subseteq A\cap B$. 
  If $A$ is D-closed, then $A.P\forkindep^l_{C.P} B.P$ iff $A\forkindep^l_{C.P_A} B.P$.
  By symmetry, if $B$ is D-closed, then $A.P\forkindep^l_{C.P} B.P$ iff $A.P\forkindep^l_{C.P_B} B$.
  Furthermore, if both $A$ and $B$ are D-closed, then $A.P\forkindep^l_{C.P} B.P$ implies $A.B\forkindep^l_{P_A.P_B} P$, i.e. $P_{A.B}=P_A.P_B$ and $A.B$ is D-closed.
\end{lemma}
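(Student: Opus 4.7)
The plan is to establish the three claims by direct manipulation of $\forkindep^l$ using its basic properties from \cref{linear disjointedness properties} --- monotonicity, base monotonicity, symmetry and transitivity --- together with the D-closedness hypothesis $A\forkindep^l_{P_A} P$ (and analogously for $B$).

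For the first biconditional, the backward direction is a single application of base monotonicity: from $A\forkindep^l_{C.P_A} B.P$, since $C.P_A\subseteq C.P\subseteq B.P$ (using $P_A\subseteq P$ and $C\subseteq B$), enlarging the base to $C.P$ gives $A.(C.P)\forkindep^l_{C.P} B.P$, and $A.(C.P)=A.P$ because $C\subseteq A$. The forward direction is where D-closedness enters. Starting from $A\forkindep^l_{P_A} P$, symmetry gives $P\forkindep^l_{P_A} A$; base monotonicity with new base $C.P_A$ (which sits inside $A$) then yields $C.P\forkindep^l_{C.P_A} A$, and a second application of symmetry produces $A\forkindep^l_{C.P_A} C.P$. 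Transitivity applied to this and the assumption $A.P\forkindep^l_{C.P} B.P$ (rewritten as $A.(C.P)\forkindep^l_{C.P} B.P$) then gives $A\forkindep^l_{C.P_A} B.P$. The second biconditional follows from the first by interchanging $A$ and $B$ and invoking symmetry of $\forkindep^l$.

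For the \emph{furthermore} statement, which I expect to be the real work, set $E:=P_A.P_B$; the goal $A.B\forkindep^l_E P$ will be assembled by transitivity along the chain $E\subseteq A.P_B\subseteq A.P$. First, base monotonicity applied to $A\forkindep^l_{P_A} P$ with new base $E$ (valid since $P_A\subseteq E\subseteq P$) produces $A.E\forkindep^l_E P$, i.e.\ $A.P_B\forkindep^l_E P$. Second, by part (2) applied to the assumption and symmetry we have $B\forkindep^l_{C.P_B} A.P$; since $C.P_B\subseteq A.P_B\subseteq A.P$ (using $C\subseteq A$ and $P_B\subseteq P$), base monotonicity with new base $A.P_B$ yields $A.B\forkindep^l_{A.P_B} A.P$. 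Symmetrizing these to $P\forkindep^l_E A.P_B$ and $A.P\forkindep^l_{A.P_B} A.B$, and noting that $P.(A.P_B)=A.P$, transitivity combines them to $P\forkindep^l_E A.B$; symmetry delivers $A.B\forkindep^l_E P$. This independence implies simultaneously that $P_{A.B}=E=P_A.P_B$ (an $x\in(A.B\cap P)\setminus E$ would make $\{1,x\}$ be $P$-linearly dependent but $E$-linearly independent, contradicting linear disjointness over $E$) and that $A.B$ is D-closed.

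The principal source of difficulty is bookkeeping: every invocation of base monotonicity requires checking that the new base is contained in one of the two sides of the independence relation, and the statement itself hides several non-trivial identities such as $A.C.P=A.P$, $A.P_A.P_B=A.P_B$ and $P.(A.P_B)=A.P$. Once one commits to the intermediate field $A.P_B$ as the ``bridge'' in part (3), all these verifications reduce to $C\subseteq A\cap B$ and $P_A,P_B\subseteq P$, but identifying that bridge is the crucial design choice.
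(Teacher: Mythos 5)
Your proof is correct and follows essentially the same route as the paper's: the first biconditional by base monotonicity one way and base monotonicity plus transitivity the other, and the ``furthermore'' by stitching two applications of base monotonicity together with transitivity across an intermediate compositum. The only (immaterial, by symmetry) difference is that you bridge through $A.P_B$ whereas the paper bridges through $B.P_A$; you also make explicit the symmetry steps that the paper's appeals to base monotonicity leave implicit, and you spell out the final deduction that $P_{A.B}=P_A.P_B$, which the paper states without detail.
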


\begin{proof}
  If $A\forkindep^l_{C.P_A} B.P$, then $A.P\forkindep^l_{C.P} B.P$ from base monotonicity.
  On the other hand, if $A.P\forkindep^l_{C.P} B.P$, then because $A\forkindep^l_{P_A} P$ implies $A\forkindep^l_{C.P_A} C.P$ from base monotonicity, we get from transitivity that $A\forkindep^l_{C.P_A} B.P$.
  For the furthermore part, we know from $A\forkindep^l_{P_A} P$ and $A.P\forkindep^l_{C.P} B.P$ that $A\forkindep^l_{C.P_A} B.P$.
  By base monotonicity, $A.B\forkindep^l_{B.P_A} B.P$.
  Also, from $B\forkindep^l_{P_B} P$ and base monotonicity, $B.P_A\forkindep^l_{P_A.P_B} P$, thus by transitivity $A.B\forkindep^l_{P_A.P_B} P$. 
\end{proof}

\begin{definition} \label{D:star independence}
  Let $M\preceq \M$ and $A,B\subseteq \M$ be small D-closed subfields, such that $M\subseteq A\cap B$.
  Define $A\forkindep^*_M B$ if
  \begin{enumerate}
    \item $P_A\forkindep^K_{P_M} P_B$ in $P$.
    \item $A.P\forkindep^l_{M.P} B.P$.
  \end{enumerate}

\end{definition}

\begin{lemma} \label{coheir independence implies}
  Let $A,B,C\subseteq \mathbb{M}$ be small subsets with $C\subseteq A\cap B$.
  If $A\forkindep^u_C B$, then:
  \begin{enumerate}
    \item $P_A\forkindep^u_{P_C} P_B$ in $P$.
    \item If $A$, $B$ and $C$ are subfields and $B$ is D-closed, then $A.P\forkindep^l_{C.P} B.P$.
  \end{enumerate}
  In particular, if $M\preceq \M$ and $A$ and $B$ are D-closed with $M\subseteq A\cap B$, then $A\forkindep^u_M B$ implies $A\forkindep^*_M B$.
\end{lemma}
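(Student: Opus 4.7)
For part (1), the plan is to transfer finite satisfiability from $\M$ down to $P$ via the formula translation $\phi\mapsto\phi^P$ of \cref{corresponding formula}. Given an $L$-formula $\phi(\bar x,\bar p)$ with parameters $\bar p\in P_B$ and a sub-tuple $\bar a$ of $P_A$ satisfying it in $P$, the $L^P$-formula $\phi^P(\bar x,\bar p)\wedge\bigwedge_i(x_i\in P)$ is satisfied by $\bar a$ over $B$. Applying finite satisfiability of $\tp(A/B)$ in $C$ (note $C\subseteq B$, so $BC=B$) produces a witness $\bar c\in C$; the conjunct $\bar c\in P$ then forces $\bar c\in P\cap C=P_C$, and reinterpreting in $P$ gives the required witness for finite satisfiability of $\tp(P_A/P_B)$ in $P_C$.

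For part (2), I would first use \cref{linear disjointedness AP} to reduce $A.P\forkindep^l_{C.P}B.P$ to the equivalent statement (by D-closedness of $B$ and symmetry of linear disjointness) $B\forkindep^l_{C.P_B}A.P$. The proof then splits into two steps. First, base monotonicity applied to the D-closedness $B\forkindep^l_{P_B}P$ (using $P_B\subseteq C.P_B\subseteq B$, so $B.(C.P_B)=B$) yields $B\forkindep^l_{C.P_B}C.P$, so any tuple $\bar b\in B$ linearly independent over $C.P_B$ remains linearly independent over $C.P$. Second, suppose $\bar b\in B$ is linearly dependent over $A.P$; after clearing denominators the dependence takes the form $\sum_i\bigl(\sum_j a_{ij}p_{ij}\bigr)b_i=0$ with $a_{ij}\in A$, $p_{ij}\in P$, and some coefficient $\sum_j a_{ij}p_{ij}$ nonzero. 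This instance is captured by the existential $L^P$-formula
\[
\psi(\bar x;\bar b)\;:=\;\exists\bar q\in P\left[\sum_i\Bigl(\sum_j x_{ij}q_{ij}\Bigr)b_i=0\ \wedge\ \bigvee_i\sum_j x_{ij}q_{ij}\neq 0\right]
\]
with parameters in $B$, satisfied by $\bar a=(a_{ij})\in A$. Finite satisfiability of $\tp(A/B)$ in $C$ then produces a witness $\bar c\in C$ for $\psi(\bar x;\bar b)$, yielding a nontrivial linear dependence of $\bar b$ over $C.P$ and contradicting the first step.

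For the \emph{in particular} clause I would apply (1) and (2) with $C=M$, using that any elementary submodel $M\prec\M$ is automatically D-closed (being definably closed in $L^P$). Part (2) yields $A.P\forkindep^l_{M.P}B.P$ directly, while part (1) yields $P_A\forkindep^u_{P_M}P_B$ in $P$; this upgrades to $P_A\forkindep^K_{P_M}P_B$ by the standard fact that coheir independence over a model implies non-Kim$^u$-dividing (any $P_M$-indiscernible coheir sequence can be realized simultaneously using finite satisfiability). I expect the main obstacle to be the second step of part (2): one must observe that every element of $A[P]$ has a finite-length representation $\sum_j a_jp_j$ with $a_j\in A$, $p_j\in P$, so that linear dependence over $A.P$ is captured by an existential $L^P$-formula of the form above; the rest is a clean interplay between base monotonicity and \cref{linear disjointedness AP}.
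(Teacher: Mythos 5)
Your proof is correct and follows essentially the same route as the paper's: part (1) via the $\phi\mapsto\phi^P$ translation and finite satisfiability, and part (2) via the reduction from \cref{linear disjointedness AP}, encoding the linear dependence of $\bar b$ over $A.P$ as an existential $L^P$-formula over $B$ to which $A\forkindep^u_C B$ applies, then pushing the resulting dependence over $C.P$ down to $C.P_B$ using $B\forkindep^l_{P_B}P$. The only cosmetic difference is that the paper writes the coefficients as rational functions over $C$ rather than clearing denominators to the polynomial form $\sum_j a_{ij}p_{ij}$.
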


\begin{proof}
  For point (1), suppose $P\vDash \phi(a, b)$ for some formula $\phi(x,y)\in L$, $a\in P_A$ and $b\in P_B$.
  Let $\phi^P(x,y)\in L^P$ be as in \cref{corresponding formula}, we have $\mathbb{M}\vDash \phi^P(a,b)$.
  By $A\forkindep^u_C B$ there is some $c\in C$ such that $\mathbb{M}\vDash \phi^P(c,b)$.
  Thus, $c\in P\cap C=P_C$, and we have $P\vDash \phi(c, b)$.

  For point (2), by \cref{linear disjointedness AP} it is enough to prove $A.P\forkindep^l_{C.P_B} B$ .
  Let $\sum_i u_ib_i=0$ for $u_i\in A.P$ and $b_i\in B$ such that the $u_i$ are not all equal to $0$.
  We can write $u_i=f_i(\bar{a}_i,\bar{p}_i)$ for $f_i\in C(\bar{x}_i,\bar{y}_i)$ rational functions, $\bar{a}_i\in A$ and $\bar{p}_i\in P$.
  Assume that $f_i$ are polynomials by multiplying by all denominators.
  We have 
  $$\vDash \sum_i f_i(\bar{a}_i,\bar{p}_i)b_i=0\land \bigvee_if_i(\bar{a}_i,\bar{p}_i)\ne 0,$$
  and in particular 
  $$\vDash \exists\bar{y}_i\in P, \sum_i f_i(\bar{a}_i,\bar{y}_i)b_i=0\land \bigvee_if_i(\bar{a}_i,\bar{y}_i)\ne 0.$$
  From $A\forkindep^u_C B$, there are $\bar{c}_i\in C$ such that 
  $$\vDash \exists\bar{y}_i\in P \sum_i f_i(\bar{c}_i,\bar{y}_i)b_i=0\land \bigvee_if_i(\bar{c}_i,\bar{y}_i)\ne 0.$$
  Let $\bar{q}_i\in P$ witness the existence, and let $v_i=f_i(\bar{c}_i,\bar{q}_i)\in C.P$.
  We have $\sum_i v_ib_i=0$ and $v_i$ are not all equal to 0.
  Moreover,  $B\forkindep^l_{P_B} P$, so by base monotonicity $B\forkindep^l_{C.P_B} C.P$, thus there are $w_i\in C.P_B$, not all equal to 0, such that $\sum_i w_ib_i=0$, as needed.

  The ``in particular'' part follows from the definition of $\forkindep^*$, because $P_A\forkindep^u_{P_M} P_B$ implies $P_A\forkindep^K_{P_M} P_B$ (see \cite[Fact 3.10]{D_ELB_E_2021_acfg}).
\end{proof}

\begin{lemma} \label{coheir sequence in P}
  Let $A,B,C\subseteq \M$ be small subsets with $C\subseteq A\cap B$ and let $(B_i)_{i<\omega}$ be a $C$-indiscernible coheir sequence such that $B\equiv_A B_i$ in $\ACF_T$, then $(P_{B_i})_{i<\omega}$ is a $P_C$-indiscernible coheir sequence such that $P_B\equiv_{P_A} P_{B_i}$ in $P$.
\end{lemma}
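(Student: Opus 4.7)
The plan is to verify the three conclusions separately, each as an immediate consequence of a result already established in the paper. The coheir clause is the easiest: the hypothesis gives $B_i\forkindep^u_C B_{<i}$ in $\M$ for every $i<\omega$, and point (1) of \cref{coheir independence implies} pushes this down to $P_{B_i}\forkindep^u_{P_C} P_{B_{<i}}$ in $P$, which is exactly the coheir condition inside $P$.

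For $P_C$-indiscernibility of $(P_{B_i})_{i<\omega}$ in $P$, I would first fix enumerating tuples $\bar b_i$ of $B_i$ all realising the same type over $C$; then the set of coordinates of $\bar b_i$ that land in $P$ is the same index set $S$ for every $i$ (since ``the $j$-th coordinate lies in $P$'' is a formula that only depends on $\tp(\bar b_i/C)$), yielding a uniform enumeration $\bar p_i$ of $P_{B_i}$. For any $L$-formula $\phi(\bar x_1,\dots,\bar x_n;\bar y)$, indices $i_1<\dots<i_n$ and parameters $\bar c\in P_C$, I would apply \cref{corresponding formula} to obtain $P\models\phi(\bar p_{i_1},\dots,\bar p_{i_n};\bar c)\iff\M\models\phi^P(\bar p_{i_1},\dots,\bar p_{i_n};\bar c)$; the right-hand side depends only on the order of the indices by $C$-indiscernibility of $(\bar b_i)$ in $\M$. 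For the type-equality clause, I would pick an $\M$-automorphism $\sigma$ fixing $A$ pointwise and sending the tuple $B$ to the tuple $B_i$ (available from $B\equiv_A B_i$); its restriction $\sigma\upharpoonright P_B$ is a ring isomorphism $P_B\to P_{B_i}$ that preserves $P$ and fixes $P_A$ pointwise, and \cref{elementary map} upgrades such a restriction to a partial elementary map in $P$, so $P_B\equiv_{P_A}P_{B_i}$ in $P$.

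I do not expect any substantive obstacle: each of the three clauses is a direct transfer from $\M$ to $P$ using machinery already built. The only minor subtlety is in the indiscernibility step, where one must choose the enumerations of $B_i$ (and thereby of $P_{B_i}$) uniformly in $i$ so that the notation $P_{B_i}$ propagates coherently through the argument.
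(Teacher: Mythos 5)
Your proof is correct and follows essentially the same route as the paper's: the coheir clause is transferred via \cref{coheir independence implies}, and both $P_C$-indiscernibility and $P_B\equiv_{P_A}P_{B_i}$ in $P$ ultimately rest on the fact that $L$-formulas over $P$ lift to $L^P$-formulas with quantifiers restricted to $P$ (via \cref{corresponding formula}), which is exactly the paper's one-line argument. You add two useful touches the paper elides — making explicit that the set of ``$P$-coordinates'' of the enumerated tuple is constant across $i$ because membership in $P$ is part of the type, and routing the type-equality clause through \cref{elementary map}, whose forward direction is itself just the quantifier-restriction argument — so these are refinements of detail rather than a different approach.
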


\begin{proof}
  For every formula in $P$, we can restrict all quantifiers and free variables to be over $P$ to get a formula in $\M$ with the same definable set.
  This proves that $(P_{B_i})_{i<\omega}$ is $P_C$-indiscernible and $P_B\equiv_{P_A} P_{B_i}$ in $P$.
  From \cref{coheir independence implies}, $P_{B_i}\forkindep^u_{P_C} P_{B_{<i}}$ in $P$, and $P_{B_{<i}}=\bigcup_{j<i}P_{B_j}$, so $(P_{B_i})_{i<\omega}$ is a $P_C$-indiscernible coheir sequence.
\end{proof}

\begin{proposition} \label{star independence amalgamation}
  Assume $T$ is $\NSOP_1$.
  Let $M\preceq \mathbb{M}$ and let $A,B\subseteq \mathbb{M}$ be small D-closed subfields with $M\subseteq A\cap B$, such that $A$ is algebraically closed as a field.
  If $A\forkindep^*_M B$, then $\tp(A/B)$ does not Kim$^u$-divide over $M$ (recall \cref{def kim u dividing}).
\end{proposition}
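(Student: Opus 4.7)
The plan is to verify the criterion following \cref{def kim u dividing} directly: given an $M$-indiscernible coheir sequence $(B_i)_{i<\omega}$ with $B\equiv_M B_i$, I produce $A^*\subseteq\M$ such that $A^*B_i\equiv_M AB$ for every $i<\omega$.

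\emph{Step 1: pass to $P$ and use NSOP$_1$ of $T$.} By \cref{coheir sequence in P}, $(P_{B_i})_{i<\omega}$ is a $P_M$-indiscernible coheir sequence in $P$ with $P_{B_i}\equiv_{P_M}P_B$. Clause~(1) of \cref{D:star independence} gives $P_A\forkindep^K_{P_M}P_B$. Since $T$ is NSOP$_1$, Kim's lemma for Kim-dividing (\cref{kim independence in nsop1}(1)) together with compactness yields $P_{A^*}\subseteq P$ and a fixed $L$-elementary map $\alpha:P_A\to P_{A^*}$ over $P_M$ such that for every $i$ the amalgamation $\alpha\cup\beta_i:P_AP_B\to P_{A^*}P_{B_i}$ with the indiscernibility isomorphism $\beta_i:P_B\to P_{B_i}$ is $L$-elementary.

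\emph{Step 2: construct $A^*$.} Let $S$ be a transcendence basis of $A$ over $M.P_A$, and by saturation of $\M$ pick $S^*\subseteq\M$ algebraically independent over $M.P\cup\bigcup_i B_i$ with $|S^*|=|S|$. Set $A^*=\overline{M.P_{A^*}(S^*)}$ and extend the field isomorphism $M.P_A(S)\to M.P_{A^*}(S^*)$ (fixing $M$, extending $\alpha$, matching $S$ to $S^*$) to a field isomorphism $\alpha_1:A\to A^*$. The D-closedness of $A$ together with $A$ being algebraically closed forces $P_A\subseteq P$ to be regular, which transfers to $P_{A^*}\subseteq P$ via the $L$-elementarity of $\alpha$ in $P$ (using that the predicates $\mathrm{Sol}_n$ and $D_{n,p}$ lie in $L$). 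Combined with $M\forkindep^l_{P_M}P$ and the independence of $S^*$ over $M.P\cup\bigcup_i B_i$, standard tower arguments then yield $P_{A^*}=A^*\cap P$, D-closedness of $A^*$, and $A^*.P\forkindep^l_{M.P}B_i.P$ for every $i$; \cref{isomorphism} thereby certifies $\alpha_1$ as an $L^f$-isomorphism, so $A^*\equiv_M A$ in $L^f$.

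\emph{Step 3: amalgamate to conclude.} Fix $i$. Clause~(2) of $\forkindep^*$ gives $A.P\forkindep^l_{M.P}B.P$, and by construction $A^*.P\forkindep^l_{M.P}B_i.P$. \cref{linear disjointedness AP} then gives $P_{AB}=P_A.P_B$, $P_{A^*B_i}=P_{A^*}.P_{B_i}$, and D-closedness of both composites. Extend $\alpha\cup\beta_i$ to an $L$-automorphism $\hat\alpha_i$ of $P$ over $P_M$ (by saturation of $P$). Three successive applications of \cref{isomorphism amalgamation} — first combining $\alpha_1$ with $\hat\alpha_i$ over $P_A$ to yield $A.P\to A^*.P$, then $\beta_i$ with $\hat\alpha_i$ over $P_B$ to yield $B.P\to B_i.P$, and finally amalgamating these two over $M.P$ — produce a ring isomorphism $A.P.B\to A^*.P.B_i$ whose restriction to $AB$ is a ring isomorphism $AB\to A^*B_i$ with $P$-part $\alpha\cup\beta_i$. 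Since $\alpha\cup\beta_i$ is $L$-elementary, \cref{elementary map} concludes that $AB\equiv_M A^*B_i$, as required.

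The main obstacle is the triple amalgamation in Step~3, which must fuse the NSOP$_1$-controlled behavior of the $P$-parts (encapsulated in $\alpha\cup\beta_i$) with the transcendence-freedom of $A^*$ produced in Step~2 into a single ring isomorphism on the composite $A.P.B$. Extending $\alpha\cup\beta_i$ to an $L$-automorphism of $P$ supplies the common $P$-component that makes all three amalgamations compatible, and the linear disjointness $A.P\forkindep^l_{M.P}B.P$ from clause~(2) of $\forkindep^*$ is precisely the hypothesis needed for the final amalgamation over the base $M.P$. Checking the hypotheses of \cref{isomorphism amalgamation} at each stage — in particular, D-closedness of every field involved and the matching of the partial isomorphisms on the intersections — is the technical heart of the proof.
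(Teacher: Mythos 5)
Your overall architecture — pass to $P$ via NSOP$_1$, lift the amalgamated $P$-configuration to the full structure using transcendence-degree freedom, then conclude with \cref{isomorphism amalgamation} and \cref{elementary map} — follows the same skeleton as the paper's proof, with essentially the same ingredients. Your Step 1 is correct, and your Step 3 amalgamation scheme (three applications of \cref{isomorphism amalgamation} over $P_A$, $P_B$, and $M.P$) is a reasonable reorganization of the paper's two amalgamations and would go through once the needed linear disjointness is available.

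The gap is in Step 2. You claim that the regularity of $P_{A^*}\subseteq P$, together with $M\forkindep^l_{P_M}P$ and the algebraic independence of $S^*$ over $M.P\cup\bigcup_i B_i$, suffices (``standard tower arguments'') to establish $A^*.P\forkindep^l_{M.P}B_i.P$. By \cref{linear disjointedness AP} and \cref{regular algebraically independent}, that conclusion amounts to showing $M.P_{A^*}\subseteq B_i.P$ is a \emph{regular} extension; but the ingredients you cite involve no information about how $B_i$ interacts with $M.P_{A^*}$ beyond what is shared by every D-closed field containing $M$. In particular, $B_i$ is not algebraically closed, so there is no a priori reason why it could not contribute elements of $\overline{M.P_{A^*}}\setminus M.P_{A^*}$ to the compositum $B_i.P$, which would destroy regularity. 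Indeed, for the original pair this regularity does \emph{not} come for free: the paper derives regularity of $M.P_A\subseteq B.P$ from clause~(2) of $\forkindep^*$ — namely $A.P\forkindep^l_{M.P}B.P$, whence $A\forkindep^l_{M.P_A}B.P$, combined with $\overline{M.P_A}\subseteq A$ since $A$ is algebraically closed — and then transfers it to $M.P_{A^*}\subseteq B_i.P$ via \cref{regular extension isomorphism} applied to the field isomorphism $\sigma_{i,0}:B.P\to B_i.P$ (your $\tau_2$). Your Step~2 never invokes clause~(2) and never uses such a transfer; it derives regularity purely from data inside $P$ (and from $M$), which is not enough. The fact that you only construct the isomorphism $B.P\to B_i.P$ in Step~3, after already asserting the linear disjointness in Step~2, is a symptom of this: the argument can be repaired by constructing $\tau_2$ first and then transporting the regularity of $M.P_A\subseteq B.P$, but as written the justification offered for the key linear disjointness is wrong.
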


\begin{proof}
  Let $(B_i)_{i<\omega}$ be any $M$-indiscernible coheir sequence such that $B\equiv_M B_i$ in $\ACF_T$ for every $i<\omega$ and let $\beta_i:B\to B_i$ be $L^P$-isomorphisms such that $(\beta_i(b))_{b\in B}$ is an $M$-indiscernible coheir sequence in $\ACF_T$.
  By \cref{coheir sequence in P}, $(P_{B_i})_{i<\omega}$ is a $P_M$-indiscernible coheir sequence in $P$, where $P_{B_i}$ is enumerated as $(\beta_i(b))_{b\in P_B}$.
  Because $T$ is $\NSOP_1$ and $P_A\forkindep^K_{P_M} P_B$ in $P$, \cref{kim independence in nsop1}(2) implies that there exists $Q\subseteq P$ such that $P_AP_B\equiv_{P_M} QP_{B_i}$ in $P$ for all $i<\omega$, where we consider all the above fields as tuples. More explicitly, let $p((x_a)_{a\in P_A},(x_b)_{b\in P_B})=\tp((a)_{a\in P_A},(b)_{b\in P_B}/P_M)$, then let $(a')_{a\in P_A}$ be a realization of $\bigcup_{i<\omega} p((x_a)_{a\in P_A},(\beta_i(b))_{b\in P_B})$, and let $Q$ be $\set{a' \mid a \in P_A}$.
  As $(a)_{a\in P_A}(b)_{b\in P_B} \equiv_{P_M} (a')_{a\in P_A}(\beta_i(b))_{b\in P_B}$ in $P$, by saturation there are automorphisms $\gamma_i$ of $P$ mapping $P_AP_B$ to $QP_{B_i}$ extending $\beta_i|_{P_B}$ (so fixing $P_M$ pointwise) such that $\gamma_i(a)=a'$ for all $a\in P_A$. 
  In particular, the restrictions $\gamma_i|_{P_A}:P_A\to Q$ are the same for every $i<\omega$.
  Name this restriction $\alpha_0:P_A\to Q$.

  Let $S\subseteq A$ be a transcendence basis of $A$ over $M.P_A$.
  \Cref{saturation trdeg} implies that $\trdeg(\mathbb{M}/P)=\abs{\mathbb{M}}$, so there exists some $S'$ algebraically independent over $B_{<\omega}P$ with $\abs{S'}=\abs{S}$.
  Define $A'=\overline{M.Q(S')}$.
  From \cref{substructure}, $M\forkindep^l_{P_M} P$, so from monotonicity $M\forkindep^l_{P_M} P_A$ and $M\forkindep^l_{P_M} Q$.
  Thus, from stationarity of $\forkindep^l$, we can extend $\alpha_0:P_A\to Q$ to an isomorphism of fields $M.P_A\to M.Q$ preserving $M$ pointwise.
  Map $S\mapsto S'$ arbitrarily and extend arbitrarily to the algebraic closure, to get an isomorphism of fields $\alpha:A\to A'$.
  This give us a way to consider $A'$ as a tuple.

  Let $i<\omega$.
  We know that $B\forkindep^l_{P_B} P$ and $B_i\forkindep^l_{P_{B_i}} P$, the field isomorphisms $\beta_i:B\to B_i$ and $\gamma_i:P\to P$ both restrict to the same isomorphism $P_B\to P_{B_i}$, so from \cref{isomorphism amalgamation} they can be jointly extended to an isomorphism of fields $\sigma_{i,0}:B.P\to B_i.P$.
  From $A.P\forkindep^l_{M.P} B.P$ and \cref{linear disjointedness AP} we get that $A\forkindep^l_{M.P_A} B.P$.
  We would like to prove that also $A'\forkindep^l_{M.Q} B_i.P$.
  We know that $A$ is algebraically closed, so $M.P_A\subseteq B.P$ is regular.
  Applying \cref{regular extension isomorphism} with $\sigma_{i,0}$, we get that $M.Q\subseteq B_i.P$ is regular.
  The set $S'$ is algebraically independent over $B_i.P$, so from \cref{regular algebraically independent} $\overline{M.Q(S')}\forkindep^l_{M.Q} B_i.P$, where $\overline{M.Q(S')}=A'$.

  The isomorphisms of fields $\alpha:A\to A'$ and $\sigma_{i,0}:B.P\to B_i.P$ restrict to the same isomorphism $M.P_A\to M.Q$, which acts as $\alpha_0$ on $P_A$ and preserves $M$ pointwise.
  Thus, from \cref{isomorphism amalgamation}, they can be jointly extended to an isomorphism of fields $\sigma_{i,1}:A.B.P\to A'.B_i.P$.
  By \cref{extend to automorphism}, $\sigma_{i,1}$ can be extended to $\sigma_{i,2}$ an $L^P$-automorphism of $\mathbb{M}$.
  The automorphism $\sigma_{i,2}$ maps $AB\mapsto A'B_i$ and extends $\alpha$ and $\beta_i$ (in particular fixes $M$ pointwise). Let $q((x_a)_{a\in A},(x_b)_{b\in B})=\tp((a)_{a\in A},(b)_{b\in B}/M)$. We get that $(\alpha(a))_{a\in A}$ realizes $\bigcup_{i<\omega} q((x_a)_{a\in A},(\beta_i(b))_{b\in B})$ as required.
\end{proof}

\subsection{NSOP$_1$, simplicity}
\begin{remarkcnt} \label{coheir acl closure}
    In a general theory $T$, if $A\forkindep^u_C B$, then $\acl(AC)\forkindep^u_{\acl(C)} \acl(BC)$.
    Indeed, by extension, for some $A'\equiv_{BC} A$ we have $A'\forkindep^u_C \acl(BC)$, and by applying an automorphism taking $A'$ to $A$ and fixing $BC$ we get that $A\forkindep^u_C \acl(BC)$.
    By base monotonicity, $A\forkindep^u_{\acl(C)} \acl(BC)$.
    
    Suppose that $\models\phi(d,b)$ where $\phi(x,y)$ is a formula over $\acl(C)$, $d\in \acl(AC)$ and $b\in \acl(BC)$.
    Let $\psi(x,z)$ be a formula over $C$ and $a\in A$ be such that $\psi(x,a)$ is algebraic, say of size $n$, and $\vDash \psi(d,a)$, that is
    $$\models\exists^{\le n}x\,\psi(x,a)\land \exists x\,(\phi(x,b)\land \psi(x,a)).$$
    As $A\forkindep^u_{\acl(C)} \acl(BC)$, there exists $c\in \acl(C)$ such that $\psi(x,c)$ is of size at most $n$ and $\vDash \exists x(\phi(x,b)\land \psi(x,c))$, let $e$ witness the existence.
    The fact that $\vDash \psi(e,c)$ implies that $e\in \acl(C)$, and we have $\vDash \phi(e,b)$, so $\acl(AC)\forkindep^u_{\acl(C)} \acl(BC)$.
\end{remarkcnt}

\begin{theorem} \label{nsop1}
  If $T$ is $\NSOP_1$, then $\ACF_T$ is $\NSOP_1$.
\end{theorem}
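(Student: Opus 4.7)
The plan is to verify the Weak Independent Amalgamation criterion from \cref{WIA}. So I start with $M \prec \M$ and tuples $a_0 b_0 \equiv_M a_1 b_1$ satisfying $b_1 \forkindep^u_M b_0$ and $b_i \forkindep^u_M a_i$ for $i = 0, 1$, and aim to produce $a$ with $a b_0 \equiv_M a b_1 \equiv_M a_0 b_0$. First, I would replace each tuple by its $L^P$-algebraic closure, setting $A_i := \acl(Ma_i)$ and $B_i := \acl(Mb_i)$; these are D-closed and algebraically closed as fields, and the coheir conditions persist by \cref{coheir acl closure}. By \cref{coheir independence implies}, they split into the $P$-side conditions $P_{B_1} \forkindep^u_{P_M} P_{B_0}$ and $P_{B_i} \forkindep^u_{P_M} P_{A_i}$, and the linear-disjointness conditions $A_i.P \forkindep^l_{M.P} B_i.P$ and $B_1.P \forkindep^l_{M.P} B_0.P$.

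Since $P_{A_0}P_{B_0} \equiv_{P_M} P_{A_1}P_{B_1}$ in $P$ by \cref{elementary map} and $T$ is assumed $\NSOP_1$, I apply \cref{WIA} inside $P$ to obtain $Q \subseteq P$ such that $QP_{B_0} \equiv_{P_M} QP_{B_1} \equiv_{P_M} P_{A_0}P_{B_0}$ in $P$; denote by $\alpha : Q \to P_{A_0}$ the common restriction of the two witnessing elementary maps on $Q$. Next I would lift $Q$ to a D-closed algebraically closed field $A \subseteq \M$ realizing the role of $A_0$ over both $B_0$ and $B_1$ simultaneously, mimicking the construction in the proof of \cref{star independence amalgamation}: take a transcendence basis $S$ of $A_0$ over $M.P_{A_0}$, use \cref{saturation trdeg} to pick $S'$ algebraically independent over $B_0 B_1 P$ of the same cardinality, and set $A := \overline{M.Q(S')}$. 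Stationarity of $\forkindep^l$ extends $\alpha$ to a field isomorphism $\tilde\alpha : A_0 \to A$ over $M$, and \cref{regular algebraically independent}, applied after transporting the regularity of $M.P_{A_0} \subseteq B_0.P$ along the elementary map given by WIA in $T$, yields $A.P \forkindep^l_{M.P} B_i.P$, so $A \forkindep^*_M B_i$.

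To conclude $A B_i \equiv_M A_0 B_0$ for each $i$, I would glue $\tilde\alpha$ with the elementary map on the $P$-side sending $QP_{B_i}$ to $P_{A_0}P_{B_0}$: these agree on $M.P_{A_0}$, so by \cref{isomorphism amalgamation} they combine into a field isomorphism $A_0.B_0.P \to A.B_i.P$, which \cref{extend to automorphism} promotes to an $L^P$-automorphism of $\M$ fixing $M$ pointwise. The main obstacle, and the point requiring the most care, is the \emph{simultaneity}: a single field $A$ (with a single internal copy of $Q$) must work for both $i = 0$ and $i = 1$. This forces choosing $S'$ algebraically independent over the combined field $B_0 B_1 P$ (rather than over each $B_i P$ separately) and using the common restriction $\alpha$ provided by WIA in $T$, so that the two amalgamations run from compatible bases.
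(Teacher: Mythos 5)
Your proof is correct and reaches the conclusion, but it packages the argument differently from the paper. The paper's proof of \cref{nsop1} is very short: it first establishes $A_0\forkindep^*_M B_0$ (via \cref{coheir independence implies} applied to $B_0\forkindep^u_M A_0$ followed by symmetry of $\forkindep^K$ in $P$, i.e.\ \cref{kim independence in nsop1}(3)), then invokes the self-contained \cref{star independence amalgamation}, whose proof takes a full $\omega$-length coheir sequence $(B_i)$ and uses Kim's lemma in $P$ (\cref{kim independence in nsop1}(1)) to find a single $Q$ working for \emph{all} $i<\omega$. You instead verify the hypotheses of \cref{WIA} inside $P$ directly and apply WIA there to produce $Q$ for the two tuples $P_{B_0},P_{B_1}$ at once, then inline the lifting construction (transcendence basis $S'$ over $B_0B_1P$, regularity transport via \cref{regular extension isomorphism}, amalgamation via \cref{isomorphism amalgamation}, and \cref{extend to automorphism}). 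The net effect is the same, but your route bypasses $\forkindep^K$-symmetry and Kim's lemma in favor of a second application of WIA, and it avoids appealing to \cref{star independence amalgamation} as a black box. What you lose is reusability: \cref{star independence amalgamation} is also the engine behind \cref{kim independence} (the characterization of $\forkindep^K$ as $\forkindep^*$) and hence \cref{simple}, whereas the inlined WIA argument proves only the amalgamation needed here. A few small details worth tightening if this were to be written up: the direction of $\alpha$ should be $\alpha^{-1}:P_{A_0}\to Q$ when you extend it into $\tilde\alpha:A_0\to A$; the regularity of $M.P_{A_0}\subseteq B_0.P$ is transported not just along the $P$-side elementary map from WIA but along a full field isomorphism $B_0.P\to B_i.P$ obtained by amalgamating that map with the $L^P$-isomorphism $B_0\to B_i$ coming from $A_0B_0\equiv_M A_1B_1$ (using \cref{isomorphism amalgamation} once more, and checking the two maps agree on $P_{B_0}$); and after \cref{regular algebraically independent} gives $A\forkindep^l_{M.Q}B_i.P$, a base-monotonicity step is needed to pass to $A.P\forkindep^l_{M.P}B_i.P$.
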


\begin{proof}
  We will use \cref{WIA}.
  Let $M\preceq \mathbb{M}$ and suppose $A_0$, $A_1$, $B_0$ and $B_1$ are such that $A_0B_0\equiv_M A_1B_1$ in $\ACF_T$, $B_1\forkindep^u_M B_0$ and $B_i\forkindep^u_M A_i$ for $i=0,1$.
  By \cref{coheir acl closure}, we can assume that $A_i=\acl(A_iM)$, $B_i=\acl(B_iM)$, and in particular they are all D-closed and algebraically closed.
  
  From $B_0\forkindep^u_M A_0$, we get using \cref{coheir independence implies} that $B_0\forkindep^*_M A_0$.
  However, $T$ is $\NSOP_1$, so \cref{kim independence in nsop1}(3) implies that $\forkindep^K$ in $P$ is symmetric, thus $\forkindep^*$ is also symmetric and we have $A_0\forkindep^*_M B_0$.
  By \cref{star independence amalgamation}, $\tp(A_0/B_0)$ does not Kim$^u$-divide over $M$.
  Extend the pair $(B_0,B_1)$ to a coheir sequence $(B_i)_{i<\omega}$ (to do that, first extend $\tp(B_1/MB_0)$ to a global type which is finitely satisfiable in $M$, and then generate a Morley sequence in that type; see \cite[\S 3.1]{Kaplan2017}). By the definition of Kim$^u$-dividing (\cref{def kim u dividing}) we get that there exists $A\subseteq \mathbb{M}$ such that $A_0B_0\equiv_M AB_0\equiv_M AB_1$ in $\ACF_T$.
\end{proof}

\begin{corollary}
    The theory of $\omega$-free PAC fields was shown to be non-simple by Chatzidakis \cite{Chatzidakis1999}, as it is PAC and unbounded, and NSOP$_1$ by Chernikov and Ramsey \cite{Chernikov2016}.
    Thus, $ACF_{\omega\text{-free PAC}}$ is NSOP$_1$ and non-simple as the theory of $\omega$-free PAC fields is interpretable in $ACF_{\omega\text{-free PAC}}$.
\end{corollary}

Now we will show that in NSOP$_1$ theories, Kim-independence is $\forkindep^*$ for certain sets.

\begin{proposition} \label{kim independence}
  Assume $T$ is $\NSOP_1$.
  Let $M\preceq \mathbb{M}$ and let $A,B\subseteq \mathbb{M}$ be small D-closed subfields with $M\subseteq A\cap B$.
  Then $A\forkindep^K_M B$ implies $A\forkindep^*_M B$.
  If either $A$ or $B$ are algebraically closed as fields, then also $A\forkindep^*_M B$ implies $A\forkindep^K_M B$.
\end{proposition}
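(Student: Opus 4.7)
The second, conditional direction is shorter. The relation $\forkindep^*$ is symmetric: its first clause inherits symmetry from \cref{kim independence in nsop1}(3) (symmetry of $\forkindep^K$ in the NSOP$_1$ theory $T$), its second from symmetry of $\forkindep^l$. So one may assume without loss of generality that $A$ is algebraically closed as a field, and then \cref{star independence amalgamation} applies: $A \forkindep^*_M B$ implies that $\tp(A/B)$ does not Kim$^u$-divide over $M$. Since $\ACF_T$ is NSOP$_1$ by \cref{nsop1}, \cref{kimu equals kim} identifies Kim$^u$-dividing with Kim-dividing in $\ACF_T$, and \cref{kim independence in nsop1}(2) then yields $A \forkindep^K_M B$.

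For the forward direction, assume $A \forkindep^K_M B$; we verify both clauses of $\forkindep^*$. For clause (1), $P_A \forkindep^K_{P_M} P_B$ in $P$: pick an $M$-indiscernible coheir sequence $(B_i)_{i<\omega}$ in $\M$ with $B_0 = B$. By \cref{coheir sequence in P}, the projection $(P_{B_i})$ is a $P_M$-indiscernible coheir sequence in $P$ starting at $P_B$. Since $A \forkindep^K_M B$ is equivalent (via \cref{kim independence in nsop1}(2)) to $\tp(A/MB)$ not Kim-dividing, there is $A'$ with $A' B_i \equiv_M A B$ for every $i$; restricting types to $P$ via \cref{elementary map} yields $P_{A'} P_{B_i} \equiv_{P_M} P_A P_B$ in $P$. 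This witnesses that $\tp(P_A/P_M P_B)$ does not Kim-divide along a coheir sequence in $P$; Kim's lemma \cref{kim independence in nsop1}(1) (applicable since $T$ is NSOP$_1$) then gives $P_A \forkindep^K_{P_M} P_B$.

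For clause (2), $A.P \forkindep^l_{M.P} B.P$: suppose for contradiction it fails. By \cref{linear disjointedness AP} (using $A$ D-closed), pick $a_1, \dots, a_n \in A$ linearly independent over $M.P_A$---equivalently over $M.P$, since the tensor decomposition $A.P \cong A \otimes_{P_A} P$ (from D-closure of $A$) lifts $M$-linear independence in $A$ to $M.P$-linear independence in $A.P$---and $c_1, \dots, c_n \in B.P$ not all zero with $\sum_j c_j a_j = 0$. Let $(B_i)$ and $A'$ be as in clause (1), coordinating tuple enumerations so the implementing $M$-automorphisms $\sigma_i \colon (A, B) \to (A', B_i)$ satisfy $\sigma_i|_A = \tau$ for a single fixed isomorphism $\tau \colon A \to A'$. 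Setting $a'_j = \tau(a_j) \in A'$ and $c^{(i)}_j = \sigma_i(c_j) \in B_i.P$ yields $\sum_j c^{(i)}_j a'_j = 0$ for every $i$, and after extraction we may take $(\bar{c}^{(i)})_{i<\omega}$ to be $MA'$-indiscernible. Linear independence of $\bar{a}'$ over $M.P$ (preserved by the $L^P$-isomorphism $\tau$) forces $\bar{c}^{(i)} \notin (M.P)^n$, else $\sum_j c^{(i)}_j a'_j = 0$ would give a nontrivial $M.P$-dependence on $\bar{a}'$. By \cref{coheir independence implies}(2), the fields $B_i.P$ are mutually linearly disjoint over $M.P$, so $B_i.P \cap B_{<i}.P = M.P$; a ``common ratio'' induction on $n$---in the base case $n = 2$, any $K$-proportionality $\bar{c}^{(0)} = \gamma \bar{c}^{(1)}$ forces the ratios $c^{(0)}_j / c^{(0)}_1 = c^{(1)}_j / c^{(1)}_1$ to lie in $B_0.P \cap B_1.P = M.P$, making $\bar{c}^{(0)}$ a $B_0.P$-scalar multiple of a vector in $(M.P)^n$---extracts an $M.P$-linear relation on $\bar{a}'$, contradicting its linear independence.

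The principal obstacle is completing clause (2): propagating the mutual $M.P$-linear disjointness of the $B_i.P$'s into an unavoidable $M.P$-linear relation on the fixed tuple $\bar{a}'$. The $n = 2$ ratio computation generalizes by induction on $n$, though one must carefully track the hierarchy of ratios across several slices $B_i.P$.
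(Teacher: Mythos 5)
Your handling of the conditional direction and of clause~(1) of the forward direction is essentially the paper's argument (modulo a small presentational shuffle in clause~(1): the paper works with a coheir sequence satisfying $B \equiv_A B_i$, extracted from $A\forkindep^K_M B$, rather than transporting the realization $A'$ into $P$; both are fine).

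The genuine gap is exactly where you flag it: clause~(2). Your ``common ratio induction'' is not a proof; the $n=2$ case you write out uses that the kernel of $\bar{c}\mapsto\sum c_j a'_j$ is one-dimensional, which forces the $\bar{c}^{(i)}$ to be proportional, and this collapses for $n\ge 3$ where the kernel has dimension $n-1$. The inductive step you gesture at (``carefully track the hierarchy of ratios across several slices $B_i.P$'') would require, for example, repeatedly intersecting composita such as $(B_0.B_1.P)\cap(B_0.B_2.P) = B_0.P$ and so on; this can likely be pushed through, but it is a substantial piece of work that you have not carried out, and you acknowledge as much. There is also an unjustified step earlier: you invoke an ``extraction'' making $(\bar{c}^{(i)})_{i<\omega}$ indiscernible over $MA'$ while implicitly retaining that $(B_i)$ is a coheir sequence and that the $B_i.P$ remain mutually linearly disjoint over $M.P$; since mutual linear disjointness is not obviously part of the finitary EM-type, this needs an argument.

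For comparison, the paper avoids all of this by a different mechanism: it takes an \emph{uncountable} coheir sequence $(B_i)_{i<\omega_1}$ with $B\equiv_A B_i$ (possible directly from $A\forkindep^K_M B$, bypassing the $A'$/$\tau$ bookkeeping since the $\sigma_i$ can be taken to fix $A$ pointwise), applies \emph{local character} of $\forkindep^l$ (\cref{linear disjointedness properties}) to get a countable $C\subseteq\acl(B_{<\omega_1}).P$ with $C(\bar{a})\forkindep^l_C\acl(B_{<\omega_1}).P$, pigeonholes to get $C\subseteq\acl(B_{<i}).P$ for some $i<\omega_1$, and then uses transitivity and base monotonicity together with \cref{coheir acl closure,coheir independence implies} to conclude $B_i.P\forkindep^l_{M.P} M.P.C(\bar{a})$, whence $\bar{a}$ being dependent over $B_i.P$ forces dependence over $M.P$. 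This is the move you are missing: local character trades the explicit linear-algebraic bookkeeping for a single countability/pigeonhole argument. If you want to salvage your linear-algebra route you would need to either complete the induction over all $n$ in full detail or replace it with the local-character argument.
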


\begin{proof}
  We will first prove that $A\forkindep^K_M B$ implies $A\forkindep^*_M B$.
  Suppose $A\forkindep^K_M B$, we need to prove that $P_A\forkindep^K_{P_M} P_B$ in $P$ and $A.P\forkindep^l_{M.P} B.P$.
  Take an arbitrary $M$-indiscernible coheir sequence $(B_i)_{i<\omega}$, with $B\equiv_M B_i$ in $\ACF_T$.
  The theory $T$ is $\NSOP_1$, so $\ACF_T$ is also $\NSOP_1$ from \cref{nsop1}.
  By \cref{kimu dividing,kim independence in nsop1}(2) there exists $A'\subseteq \M$ such that $AB\equiv_M A'B_i$ in $\ACF_T$.
  In particular, by $A\equiv_M A'$ in $\ACF_T$ there exists an automorphsim $\sigma$ of $\M$ mapping $A'$ to $A$ and preserving $M$ pointwise.
  Letting $B_i'=\sigma(B_i)$, $(B_i')_{i<\omega}$ is an $M$-indiscernible coheir sequence with $B\equiv_A B_i'$ in $\ACF_T$.
  By \cref{coheir sequence in P}, $(P_{B_i'})_{i<\omega}$ is a $P_M$-indiscernible coheir sequence with $P_B\equiv_{P_A} P_{B_i'}$ in $P$. 
  Because $T$ is $\NSOP_1$, \cref{kim independence in nsop1}(1) implies that $P_A\forkindep^K_{P_M} P_B$ in $P$.
  
  To prove that $A.P\forkindep^l_{M.P} B.P$, it is enough to prove that $A\forkindep^l_{M.P_A} B.P$, by \cref{linear disjointedness AP}.
  Let $\overline{a}\in A$ be a finite tuple and suppose it is linearly dependent over $B.P$.
  Because $A\forkindep^K_M B$, we can construct an uncountable $M$-indiscernible coheir sequence $(B_i)_{i<\omega_1}$, with $B\equiv_A B_i$ in $\ACF_T$.
  Let $\sigma_i\in \mathrm{Aut}(\mathbb{M}/A)$ be an automorphism mapping $B$ to $B_i$.
  We know that $\sigma_i$ preserves $P$ setwise, so by applying $\sigma_i$ we get that $\overline{a}$ is linearly dependent over $B_i.P$.
  By local character, there is some countable subfield $C\subseteq \acl(B_{<\omega_1}).P$ such that $C(\overline{a})\forkindep^l_C \acl(B_{<\omega_1}).P$.
  Because $C$ is countable, there is some $i<\omega_1$ such that $C\subseteq \acl(B_{<i}).P$.
  By \cref{coheir acl closure} we have $B_i\forkindep^u_M \acl(B_{<i})$, so \cref{coheir independence implies} implies that $B_i.P\forkindep^l_{M.P} \acl(B_{<i}).P$, and in particular from monotonicity $B_i.P\forkindep^l_{M.P} M.P.C$.
  However, the fact that $C(\overline{a})\forkindep^l_C \acl(B_{<\omega_1}).P$ also implies, using monotonicity, base monotonicity and symmetry, that $B_i.P.C\forkindep^l_{M.P.C} M.P.C(\overline{a})$, so by transitivity $B_i.P\forkindep^l_{M.P} M.P.C(\overline{a})$.
  The tuple $\overline{a}$ is linearly dependent over $B_i.P$, so it is linearly dependent over $M.P$.
  However, $A$ is D-closed so $A\forkindep^l_{P_A} P$ and by base monotonicity $A\forkindep^l_{M.P_A} M.P$.
  Thus, $\overline{a}$ is linearly dependent over $M.P_A$, as needed.

  If $A$ is algebraically closed and $A\forkindep^*_M B$, then from \cref{star independence amalgamation} $\mathrm{tp}(A/B)$ does not Kim$^u$-divide over $M$.
  $\ACF_T$ is $\NSOP_1$, so by \cref{kimu equals kim} Kim$^u$-dividing is the same as Kim-dividing, and by \cref{kim independence in nsop1}(2) Kim-dividing is the same as Kim-forking, thus $A\forkindep^K_M B$.
  The case where $B$ is algebraically closed follows from symmetry of $\forkindep^*$ and $\forkindep^K$ (\cref{kim independence in nsop1}(3)).
\end{proof}

\begin{remarkcnt}
    The proof of \cref{kim independence} was inspired by the proof of \cite[Proposition 7.3]{BENYAACOV2003235}
\end{remarkcnt}

\begin{theorem} \label{simple}
  If $T$ is simple, then $\ACF_T$ is simple. 
\end{theorem}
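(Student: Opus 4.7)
The plan is to verify simplicity of $\ACF_T$ by establishing base monotonicity of $\forkindep^K$ over models, as characterized by \cref{kim independence in nsop1}(5). Since $T$ simple implies $T$ is $\NSOP_1$, \cref{nsop1} yields that $\ACF_T$ is $\NSOP_1$, and \cref{kim independence} identifies Kim-independence on D-closed subfields with $\forkindep^*$ whenever one side is algebraically closed as a field. Fix models $M \prec N \prec \M$ and assume $a \forkindep^K_M Nb$; using \cref{kim independence in nsop1}(4) to replace both sides by their algebraic closures, the desired implication becomes $A_0 \forkindep^K_M B \implies A \forkindep^K_N B$, where $A_0 = \acl(Ma)$, $A = \acl(Na)$, and $B = \acl(Nb)$ are all D-closed and algebraically closed as fields with $A_0 \subseteq A$ and $N \subseteq B$. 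By \cref{kim independence}, it suffices to transfer the hypotheses
\[
  P_{A_0} \forkindep^K_{P_M} P_B \text{ (in } T\text{)} \quad \text{and} \quad A_0.P \forkindep^l_{M.P} B.P
\]
into the conclusions
\[
  P_A \forkindep^K_{P_N} P_B \text{ (in } T\text{)} \quad \text{and} \quad A.P \forkindep^l_{N.P} B.P.
\]

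For the linear-disjointness conclusion, base monotonicity of $\forkindep^l$ (\cref{linear disjointedness properties}), applied with $M.P \subseteq N.P \subseteq B.P$, gives $A_0.N.P \forkindep^l_{N.P} B.P$, reducing the goal to the identity $A.P = A_0.N.P$. The nontrivial containment $A \subseteq A_0.N.P$ will rest on the classical field-theoretic fact that the compositum of algebraically closed subfields of a field is algebraically closed: thus $A_0.N.P$ is itself algebraically closed as a field, and each element of $A = \acl(NA_0)$ is either in $P \subseteq A_0.N.P$ or field-algebraic over $NA_0$ and hence still in $A_0.N.P$.

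For the Kim-independence conclusion in $T$, simplicity of $T$ provides base monotonicity of $\forkindep^K$ in $T$ via \cref{kim independence in nsop1}(5); combined with $P_M \prec P_N$ (from \cref{elementary map}) and $P_N \subseteq P_B$, the hypothesis yields $P_{A_0} \forkindep^K_{P_N} P_B$. Absorbing $P_N$ on the left through \cref{kim independence in nsop1}(4) gives $\acl(P_{A_0}.P_N) \forkindep^K_{P_N} P_B$ (where the $\acl$ here is computed in $T$), so monotonicity of $\forkindep^K$ reduces the goal to the set-theoretic inclusion $P_A \subseteq \acl(P_{A_0}.P_N)$. This I plan to establish by an automorphism-lifting argument: first verify $P_{NA_0} = P_{A_0}.P_N$ from the linear-disjointness hypothesis and the D-closedness of $A_0$ and $N$; then, for $p \in P_A$, lift any $L$-automorphism of $P$ fixing $P_{A_0}.P_N$ pointwise to an $L^P$-automorphism of $\M$ fixing $NA_0$ pointwise via \cref{extend to automorphism}, so that the $T$-orbit of $p$ over $P_{A_0}.P_N$ embeds into its finite $\ACF_T$-orbit over $NA_0$.

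Combining gives $A \forkindep^*_N B$, which since $A$ is algebraically closed as a field converts back to $A \forkindep^K_N B$ by \cref{kim independence}, completing the verification of base monotonicity and hence of simplicity. The main obstacle will be the pair of structural identifications $A.P = A_0.N.P$ and $P_A \subseteq \acl(P_{A_0}.P_N)$: both depend on a careful description of $\acl$ in $\ACF_T$ in terms of field-algebraic closure together with the $P$-structure, and rely on the algebraic closedness of composita of algebraically closed subfields as well as on the identity $P_{NA_0} = P_{A_0}.P_N$ extracted from the linear-disjointness hypothesis.
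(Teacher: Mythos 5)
Your proposal follows the same skeleton as the paper: reduce to base monotonicity of $\forkindep^K$ via \cref{kim independence in nsop1}(5), translate through \cref{kim independence} to $\forkindep^*$, and then use base monotonicity of $\forkindep^l$ for the linear part and base monotonicity of $\forkindep^K$ in $T$ for the $P$-part. The one genuine divergence is your choice of $A=\acl(Na)$: the paper instead works with the compositum $N.A_0$ (your $N.A_0$), shows it is D-closed directly from \cref{linear disjointedness AP}, and then invokes $N.A_0\forkindep^K_N B\iff A_0\forkindep^K_N B$ via \cref{kim independence in nsop1}(4) --- completely sidestepping your two ``structural identifications.'' Working with $\acl(Na)$ forces you to prove $A\subseteq A_0.N.P$ and $P_A\subseteq\acl_T(P_{A_0}.P_N)$, which are true but add work.

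On those identities: your sketch of the second one (lift an $L$-automorphism of $P$ fixing $P_{A_0}.P_N$ pointwise to an automorphism of $\M$ fixing $N.A_0$ pointwise via \cref{extend to automorphism}, so that infinite $T$-orbits stay infinite in $\ACF_T$) is the right argument. But your sketch of the first is not: it is simply false that every element of $\acl(N.A_0)$ is ``either in $P$ or field-algebraic over $N.A_0$'' --- for instance, if $p\in P$ is $T$-algebraic but not field-algebraic over $P_{N.A_0}$ and $n\in N\setminus P$, then $pn\in\acl(N.A_0)$ is of neither form. The correct justification is the same automorphism argument you use for the second identity: if $c\notin A_0.N.P$ then, since $A_0.N.P$ is algebraically closed (compositum of algebraically closed fields), $c$ is transcendental over $A_0.N.P$, and \cref{extend to automorphism} (together with the D-closedness of $N.A_0$, which gives $N.A_0\forkindep^l_{P_{N.A_0}}P$) produces infinitely many conjugates of $c$ over $N.A_0$. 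Once you replace the field-algebraic dichotomy with that argument, your proof goes through; but the paper's choice of $N.A_0$ over $\acl(Na)$ makes all of this unnecessary.
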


\begin{proof}
  Suppose $T$ is simple, in particular $T$ is $\NSOP_1$ so \cref{nsop1} implies that $\ACF_T$ is $\NSOP_1$.
  By \cref{kim independence in nsop1}(5), for an $\NSOP_1$ theory being simple is equivalent to Kim-independence having base monotonicity. 
  Let $A,B\subseteq \M$ be small subsets and $M,N\preceq \M$ submodels, such that $M\subseteq A$, $M\subseteq N\subseteq B$.
  Suppose $A\forkindep^K_M B$, we want to prove $A\forkindep^K_N B$.
  Without loss of generality we can assume that $A$ and $B$ are $\acl$-closed.

  By \cref{kim independence}, $A\forkindep^K_M B$ implies $A\forkindep^*_M B$.
  We have $A.P\forkindep^l_{M.P} B.P$, and by monotonicity $A.P\forkindep^l_{M.P} N.P$, so from \cref{linear disjointedness AP} $N.A$ is D-closed.
  Since $B$ is D-closed and algebraically closed as a field, by \cref{kim independence} it is enough to prove $N.A\forkindep^*_N B$.
  By base monotonicity of linear disjointness, $A.P\forkindep^l_{M.P} B.P$ implies $N.A.P\forkindep^l_{N.P} B.P$.
  We know that $T$ is simple, so by base monotonicity of Kim-independence in $P$, $P_A\forkindep^K_{P_M} P_B$ implies $P_N.P_A\forkindep^K_{P_N} P_B$.
\end{proof}

\begin{corollary}
    $\ACF_{\mathrm{PSF}}$ is simple, where PSF is the theory of pseudo-finite fields (see \cref{psf simple} for an alternative proof).
\end{corollary}

\subsection{Stability}
There are a few ways to prove that if $T$ is stable, then $\ACF_T$ is stable.
The first option, continuing in the path of the previous results, is using a Kim-Pillay style characterization on non-forking independence, which in simple theories is the same as Kim-independence over models.

The second option is a more direct approach, by counting types.
The second option will give us a stronger result, that if $T$ is $\lambda$-stable, then so is $\ACF_T$, which will let us extend to super-stability and $\omega$-stability.
Even though the second option is strictly stronger than the first, we will also show the first, to complete the picture on Kim-independence.

A third way to prove stability, is by proving the existence of saturated models of certain cardinalities.
This could be done using the characterization of saturated models of $\ACF_T$ found in \cref{saturated}, but we will not expand on it here.

\begin{remarkcnt} \label{strongly minimal stable}
    When the predicate has no extra structure, stability can also be deduced from \cite[Corollary 5.4]{Casanovas2001stable} (which cites \cite{Pillay1998lang}, probably meaning Proposition 3.1 there), which is a much more general statement: if $M$ is strongly minimal and $A$ is some subset of M such that the induced structure on $A$ is stable, then $(M,A)$ is stable.
\end{remarkcnt}

\begin{theorem} \label{stable}
    If $T$ is stable, then $\ACF_T$ is stable. 
\end{theorem}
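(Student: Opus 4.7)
Since $T$ is stable it is in particular simple, so by \cref{simple} the theory $\ACF_T$ is simple. In a simple theory $\forkindep^K$ coincides with non-forking independence $\forkindep^f$ over models by \cref{kim independence in nsop1}(6), and a simple theory is stable iff non-forking independence over models is stationary. The plan is therefore to verify stationarity of $\forkindep^K$ in $\ACF_T$: given $M\prec\M$ with $a\equiv_M a'$ and $a\forkindep^K_M b$, $a'\forkindep^K_M b$, show $a\equiv_{Mb} a'$. By \cref{kim independence in nsop1}(4) I may enlarge the tuples to their algebraic closures, so I assume that $A,A',B\supseteq M$ are $\acl$-closed; such sets are automatically D-closed (since $\acl$ is closed under the $L^P$-definable functions $f_{n,i}$) and algebraically closed as fields. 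By \cref{kim independence} the two Kim-independences translate into $A\forkindep^*_M B$ and $A'\forkindep^*_M B$, and the goal becomes to produce an $L^P$-automorphism of $\M$ fixing $MB$ and sending $A$ to $A'$.

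I would carry this out by three successive amalgamations. First, the given $L^P$-isomorphism $f\colon A\to A'$ over $M$ restricts by \cref{elementary map} to an $L$-elementary map $P_A\to P_{A'}$ over $P_M$; applying stationarity of $\forkindep^K$ in the stable theory $T$ to the two independencies $P_A\forkindep^K_{P_M} P_B$ and $P_{A'}\forkindep^K_{P_M} P_B$ extends it to an $L$-elementary map $P_A.P_B\to P_{A'}.P_B$ fixing $P_B$ pointwise, which I then extend to an $L$-automorphism $\gamma$ of $P$ fixing $P_B$. Second, \cref{isomorphism amalgamation} applied to $f$ and $\gamma$, with linear disjointness $A\forkindep^l_{P_A} P$ and $A'\forkindep^l_{P_{A'}} P$ provided by D-closedness, produces a field isomorphism $\tilde f\colon A.P\to A'.P$ extending both. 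Third, \cref{isomorphism amalgamation} applied to $\tilde f$ and the identity on $B$, both fixing the common subring $M.P_B$, yields a field isomorphism $A.B.P\to A'.B.P$ extending $\tilde f$ and fixing $B$ pointwise. The linear disjointness $A.P\forkindep^l_{M.P_B} B$ (and the analogous statement for $A'$) required for this last amalgamation is obtained from $A.P\forkindep^l_{M.P} B.P$ inside $\forkindep^*$ by applying symmetry and \cref{linear disjointedness AP} to the D-closed field $B$. Finally, \cref{extend to automorphism} lifts the resulting isomorphism to an $L^P$-automorphism of $\M$, because its restriction to $P$ equals the $L$-automorphism $\gamma$.

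The main technical obstacle is the third amalgamation, where one must extract the precise base-change $A.P\forkindep^l_{M.P_B} B$ from the coarser relation $A.P\forkindep^l_{M.P} B.P$ that is built into the definition of $\forkindep^*$. The remaining steps essentially reuse the amalgamation pattern established in the proofs of \cref{star independence amalgamation} and \cref{kim independence}; the definition of $\forkindep^*$ and the translation \cref{kim independence} are set up precisely so that the stable-stationarity of $T$ inside the predicate $P$ is transported upward to the pair $\ACF_T$.
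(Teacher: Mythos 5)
Your proposal is correct and follows essentially the same strategy as the paper's proof: reduce stationarity of $\forkindep^K$ in $\ACF_T$ to stationarity in $T$ via \cref{kim independence}, then build the desired automorphism by successive applications of \cref{isomorphism amalgamation} and \cref{extend to automorphism}. The only (inessential) difference is the order of amalgamation — you build $A.P$ first and then adjoin $B$, while the paper extends the $P$-automorphism over $B$ first (to a $B.P$-automorphism, via stationarity of $\forkindep^l$) and then amalgamates with $A$; both routes rely on the same linear-disjointness extraction from $\forkindep^*$ via \cref{linear disjointedness AP}.
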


\begin{proof}
    Suppose $T$ is stable, in particular $T$ is simple so \cref{simple} implies that $\ACF_T$ is simple.
    \cite[Proposition 8.4]{Kaplan2017} says that in simple theories, non-forking independence over models is the same as Kim-independence.
    To show that $\ACF_T$ is stable, it is enough to show that non-forking independence has stationarity over models (\cite[Theorem 12.22]{casanovas_2012}).
    Let $A$, $A'$ and $B$ be small subsets such that $M\subseteq A\cap A'\cap B$. 
    Suppose $A\forkindep^K_M B$, $A'\forkindep^K_M B$ and $A\equiv_M A'$.
    Without loss of generality we can assume $A$, $A'$ and $B$ are $\acl$-closed. Let $\alpha:A\to A'$ be an $L^P$-elementary map fixing $M$ pointwise. We want to extend $\alpha$ to an automorphism fixing $B$ pointwise. 

    By \cref{elementary map} $\alpha|_{P_A}$ is an $L$-elementary map in $P$, and by \cref{kim independence} $P_A\forkindep^K_{P_M} P_B$ and $P_{A'}\forkindep^K_{P_M} P_B$ in $P$.
    We know that $T$ is stable, so by stationarity $P_A\equiv_{P_B} P_{A'}$, i.e., $(a)_{a\in P_A} \equiv_{P_B} (\alpha(a))_{a\in P_A}$.
    Let $\sigma_0$ be an automorphism of $P$ mapping $P_A$ to $P_{A'}$ extending $\alpha|_{P_A}$ and preserving $P_B$ pointwise.
    We have $B\forkindep^l_{P_B} P$, so by stationarity of linear disjointedness we can extend $\sigma_0$ to $\sigma_1:B.P\to B.P$ preserving $B$ pointwise.
    By \cref{kim independence,linear disjointedness AP}, $A\forkindep^l_{M.P_A} B.P$ and $A'\forkindep^l_{M.P_{A'}} B.P$, so by \cref{isomorphism amalgamation} we can extend $\sigma_1$ and $\alpha$ to $\sigma_2:A.B.P\to A'.B.P$.
    Extend $\sigma_2$ to $\sigma_3$, an automorphism of $\M$, using \cref{extend to automorphism}.
    Since $\sigma_3$ extends $\alpha$ and fixes $B$ pointwise we are done. 
\end{proof}

\begin{corollary}
    $\ACF_\mathrm{SCF}$ is stable, where SCF is the theory of separably closed fields.
\end{corollary}

To prove stability by counting types, we will need to show that $P$ is stably embedded in $\M$.

\begin{definition}
  A set $Q\subseteq \M^m$ which is definable over the empty set is called \emph{stably embedded} if for every $n$, if $D\subseteq \M^{mn}$ is definable, then $D\cap Q^n$ is definable with parameters from $Q$.
\end{definition}

\begin{fact}[{\cite[Appendix, Lemma 1]{Chatzidakis1999}\label{condition stably embedded}}]
  For $Q\subseteq \M$ as above, if every automorphism of the induced structure on $Q$ lifts to an automorphism of $\M$, then $Q$ is stably embedded.   
\end{fact}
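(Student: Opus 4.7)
The plan is an automorphism-theoretic argument that converts invariance under the lifted $\Aut(\M)$ into $Q$-definability. Fix a formula $\phi(\bar{x},\bar{y})$ and a tuple $\bar{a}\in\M$, and set $S:=\phi(\M,\bar{a})\cap Q^n$; the task is to produce some $\psi(\bar{x},\bar{z})$ and $\bar{c}\in Q$ with $\psi(\M,\bar{c})=S$.

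First I would record the basic invariance: any $\sigma\in\Aut(\M/Q)$ fixes $Q^n$ pointwise, whence $\sigma(S)=S$. Consequently, if $\bar{a}'\equiv_Q\bar{a}$ and $\sigma\in\Aut(\M/Q)$ sends $\bar{a}$ to $\bar{a}'$, then $\phi(\M,\bar{a}')\cap Q^n=\sigma(S)=S$. So $S$ is a function of $\tp(\bar{a}/Q)$ only, which is to say $S$ is type-definable over $Q$ in $\M$.

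To upgrade type-definability to definability, I would pass to the induced structure $Q^{\mathrm{ind}}$ on $Q$, which is sufficiently saturated since $\M$ is. A subset of $Q^n$ which is definable in $\M$ is definable over a parameter set $C\subseteq Q$ in $Q^{\mathrm{ind}}$ exactly when it is invariant under $\Aut(Q^{\mathrm{ind}}/C)$. The lifting hypothesis is the bridge from $\Aut(Q^{\mathrm{ind}})$ to $\Aut(\M)$: given $\tau\in\Aut(Q^{\mathrm{ind}}/C)$ for a well-chosen finite $C\subseteq Q$ (the natural candidate is the $Q$-part of $\acl^{\mathrm{eq}}(\bar a)$, or the canonical parameter of $S$ viewed inside $Q^{\mathrm{ind}}$), its lift $\hat\tau\in\Aut(\M)$ can, after modification using saturation, be arranged to satisfy $\hat\tau(\bar a)\equiv_Q\bar a$. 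Then the basic invariance forces $\hat\tau(S)=S$ and hence $\tau(S)=S$ in $Q^{\mathrm{ind}}$. A standard compactness argument then extracts a concrete defining formula $\psi(\bar{x},\bar{c})$ with $\bar c$ enumerating $C$.

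The main obstacle will be the selection of $C$ and the saturation-based modification of $\hat\tau$: one must ensure that for some small $C\subseteq Q$, every element of $\Aut(Q^{\mathrm{ind}}/C)$ lifts to an automorphism of $\M$ which preserves $\tp(\bar a/Q)$. This is where the lifting hypothesis is indispensable, since without it one can produce external permutations of $Q$ that correspond to no automorphism of $\M$, and then no finite $C$ controls the type of the external parameter $\bar a$ over $Q$.
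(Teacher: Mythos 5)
The paper does not prove this statement: it is quoted verbatim as a Fact from Chatzidakis's appendix (with the subsequent remark clarifying which hypotheses from that reference are actually used), so there is no in-paper argument to compare against. Judged on its own, your plan reproduces the correct high-level outline — reduce to showing $S=\phi(\M,\bar a)\cap Q^n$ is $\Aut(Q^{\mathrm{ind}}/C)$-invariant for a small $C\subseteq Q$, then extract a defining formula by compactness using the external definability of both $S$ and its complement — and the opening observation that $S$ depends only on $\tp(\bar a/Q)$ is right. But the step you flag as "the main obstacle" is not merely a technical detail to be filled in; as set up, it is circular.

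The condition you want for $C$ — that every $\tau\in\Aut(Q^{\mathrm{ind}}/C)$ lifts to some $\hat\tau\in\Aut(\M)$ with $\hat\tau(\bar a)\equiv_Q\bar a$ — is equivalent to saying $\tp(\bar a/Q)$ is fixed by $\Aut(Q^{\mathrm{ind}}/C)$, since for any lift one has $\tp(\hat\tau(\bar a)/Q)=\tau\bigl(\tp(\bar a/Q)\bigr)$. By the very compactness argument you invoke at the end, "$\tp(\bar a/Q)$ is $\Aut(Q^{\mathrm{ind}}/C)$-invariant" is \emph{equivalent} to "$\tp(\bar a/Q)$ is definable over $C$", which is the thing being proved. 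So no choice of $C$ can be justified without already knowing the conclusion. Your two concrete candidates confirm this: taking $C$ to be the canonical parameter of $S$ inside $Q^{\mathrm{ind}}$ presupposes that $S$ is definable in $Q^{\mathrm{ind}}$; and taking $C=\acl^{\mathrm{eq}}(\bar a)\cap Q$ fails already for $\M$ a saturated DLO and $Q$ a dense codense predicate, where $\acl^{\mathrm{eq}}(a)\cap Q=\emptyset$ but $\tp(a/Q)$ (a cut in $Q$) is plainly not $\Aut(Q^{\mathrm{ind}})$-invariant — and, consistently, $Q$ is not stably embedded there and some automorphisms of $Q$ do not lift.

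The actual argument has to run in the contrapositive direction, producing an automorphism of $Q^{\mathrm{ind}}$ that cannot lift. Assuming $\tp(\bar a/Q)$ is not definable, for every small $D\subseteq Q$ and suitable $\phi$ there exist $\bar q_1\equiv^{Q^{\mathrm{ind}}}_D\bar q_2$ with $\models\phi(\bar a,\bar q_1)\wedge\neg\phi(\bar a,\bar q_2)$. One then builds $\tau\in\Aut(Q^{\mathrm{ind}})$ by a transfinite back-and-forth of length $\kappa=|\M|$, at each step diagonalizing against one candidate $\bar b\in\M^{|\bar a|}$: using the pair $\bar q_1,\bar q_2$ over the current domain $D_\alpha$, one shows that \emph{some} choice of image $\bar q'$ realizing $\tau_\alpha\bigl(\tp^{Q^{\mathrm{ind}}}(\bar q_i/D_\alpha)\bigr)$ makes $\phi(\bar a,\bar q_i)$ disagree with $\phi(\bar b,\bar q')$ (if both choices were forced to agree, the two would together contradict $\bar q_1\equiv_{D_\alpha}\bar q_2$). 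The resulting $\tau$ moves $\tp(\bar a/Q)$ to a type over $Q$ omitted in $\M$, so any lift $\hat\tau$ would produce a realization $\hat\tau(\bar a)$ of an omitted type — contradiction. This construction, not a clever choice of base $C$, is where the lifting hypothesis does its work, and it is the missing idea in your sketch.
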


\begin{remarkcnt}
    The precise formulation of the above fact is more general but requires extra assumptions on $T$, namely that $T=T^{eq}$ and that the language is countable.
    However, those assumptions are not used in the proof of the direction we cited.
\end{remarkcnt}

\begin{lemma} \label{induced structure}
  The induced structure on $P$ as a subset of $\M$ is the same (up to interdefinability) as the intrinsic $L$-structure of $P$. 
\end{lemma}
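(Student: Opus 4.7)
The plan relies entirely on two already-established tools: \cref{corresponding formula}, which lifts any $L$-formula $\phi(\bar x)$ to a $P$-formula $\phi^P(\bar x) \in L^P$ with $\phi^P(\M) = \phi(P)$, and \cref{P-formula}, which says every $L^P$-formula is equivalent modulo $\ACF_T$ to a $P$-formula (one with all quantifiers restricted to $P$). Together they set up a back-and-forth translation between $L$-formulas evaluated on $P$ and $L^P$-formulas whose realization is confined to $P^n$.

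First I would check the easy direction: every $L$-definable subset of $P^n$ appears in the induced structure on $P$. Given $\phi(\bar x) \in L$ with parameters from $P$, the $P$-formula $\phi^P(\bar x)$ of \cref{corresponding formula} has $\phi^P(\M) = \phi(P)$ as subsets of $P^{|\bar x|}$, so $\phi(P)$ is $\ACF_T$-definable and hence induced.

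For the converse, I would start with an arbitrary induced set $D = \psi(\M) \cap P^n$ where $\psi(\bar x) \in L^P$ (with parameters from $P$). Invoking \cref{P-formula}, replace $\psi$ by an equivalent $P$-formula $\psi'(\bar x)$, and then build an $L$-formula $\tilde\psi(\bar x)$ by dropping the ``$\in P$'' restrictions from all the quantifiers of $\psi'$. A straightforward induction on the structure of $\psi'$ shows that $\psi'(\M) \cap P^n = \tilde\psi(P)$: each restricted quantifier of $\psi'$ ranges exactly over $P$, which is the full domain of the $L$-structure $P$, so unrestricted quantification in $\tilde\psi$ over the $L$-structure $P$ matches $P$-restricted quantification in $\psi'$ over $\M$, and the atomic $L$-subformulas are interpreted identically on $P$. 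Thus $D$ is $L$-definable in $P$.

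Combining the two directions, the induced structure on $P$ and the intrinsic $L$-structure on $P$ have the same definable relations (with corresponding parameters), so they are inter-definable. There is no real obstacle here: once \cref{P-formula} is in hand, the argument is pure bookkeeping, and the translation $\phi \leftrightarrow \phi^P$ does all the work.
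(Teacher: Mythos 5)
Your proof takes the same approach as the paper's: lift $L$-formulas to $P$-formulas via \cref{corresponding formula} for one direction, and reduce $L^P$-formulas to $P$-formulas via \cref{P-formula} and then strip the $P$-restrictions for the other. One small omission: after replacing $\psi$ by a $P$-formula $\psi'$, you strip the $P$-restrictions only from the quantifiers, but a $P$-formula can also contain \emph{atomic} subformulas of the form $P(t)$, and those must be dealt with as well before $\tilde\psi$ is genuinely an $L$-formula. The paper handles this by replacing \emph{every} occurrence of $P$ with a tautology, which is justified because when the free variables and the ($P$-restricted) bound variables all range over $P$, every $L$-term $t$ evaluates into $P$ (as $P$ is closed under the $L$-functions by the axioms of $\ACF_T$), so $P(t)$ is indeed true on the relevant domain. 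With that one adjustment your argument is complete.
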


\begin{proof}
    If $A\subseteq P^n$ is definable in $P$ by a formula $\phi\in L$, then we can construct by \cref{corresponding formula} a formula $\phi^P\in L^P$ that defines $A$ in $\M$.
    
    In the other direction, if $A\subseteq P^n$ is definable in $\M$ by a formula $\psi\in L^P$, then we can assume by \cref{bounded} that $\psi$ is bounded. 
    Remove any occurrence of $P$ in $\psi$, by replacing $x\in P$ with a tautology ($x=x$), to get a formula in $L$ that defines $A$ in $P$.
    
    This can also be deduced from \cref{extend to automorphism} using compactness (since \cref{extend to automorphism} implies that if $a, b \in P$ and $a \equiv b$ in $L$, then $a \equiv b$ in $L^P$ which implies the lemma using e.g. \cite[Lemma 3.1.1]{tent_ziegler_2012}).
 \end{proof}

From \cref{condition stably embedded,extend to automorphism,induced structure}\footnote{We only need the ``easy'' direction of \cref{induced structure}, i.e. that the $L$-structure is a reduct of the induced structure.} we conclude the following:

\begin{corollary}\label{stably embedded}
    $P$ is stably embedded in $\M$.  
\end{corollary} 

\begin{remarkcnt} \label{uniformly stably embedded}
    It follows from a simple compactness argument that $P$ is even \emph{uniformly} stably embedded, that is, for any formula $\phi(x,y)$ there exists a formula $\psi(x,z)$ such that for every $b\in \M$ there is $c\in P$ with $\phi(P,b)=\psi(P,c)$.
\end{remarkcnt}

\begin{theorem} \label{lambda stability}
    If $T$ is $\lambda$-stable, then $\ACF_T$ is $\lambda$-stable.
\end{theorem}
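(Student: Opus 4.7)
The plan is to bound $|S_1(A)| \leq \lambda$ directly, where $A \subseteq \M$ has $|A| \leq \lambda$; we may assume $\lambda \geq |L|$, else $\lambda$-stability of $T$ is vacuous. After passing to the Morleyzation of $T$ (which preserves $\lambda$-stability since types are merely renamed), we may assume $T$ has quantifier elimination, whence $\ACF_T^f$ has quantifier elimination by \cref{quantifier elimination}; since $\ACF_T^f$ is inter-definable with $\ACF_T$, bounding types in $\ACF_T^f$ suffices. Closing $A$ under the $L^f$-operations and under field-theoretic algebraic closure only multiplies cardinality by $\aleph_0$, so we may further assume $A$ is an algebraically closed, D-closed subfield of $\M$.

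Fix $p \in S_1(A)$, let $a \models p$, and set $B = \vect{A,a}_D$. By quantifier elimination, $p$ is determined by the $L^f$-isomorphism type of $B$ over $A$, and by \cref{isomorphism}, this decomposes into (i) the $L$-isomorphism type of $P_B$ over $P_A$ inside $P$, and (ii) the field isomorphism type of $B$ over $A.P_B$. The key structural claim is that there is a \emph{finite} tuple $\bar q$ in $P$ such that $P_B$ equals the $L$-substructure of $P$ generated by $P_A \cup \bar q$. I would prove this by case analysis on $a$: if $a$ is transcendental over $A.P$, then $A(a) \forkindep^l_{P_A} P$, so $A(a)$ is itself D-closed and $\bar q$ may be taken empty; if $a \in A.P$, write $a = \sum_{i<k} q_i a_i$ uniquely with $a_i \in A$ linearly independent over $P$ and $q_i \in P$, and a calculation using base monotonicity of $\forkindep^l$ on top of $A \forkindep^l_{P_A} P$ shows that $P_B$ is the $L$-substructure of $P$ generated by $P_A \cup \bar q$; the case where $a$ is algebraic over $A.P$ but not in it is handled similarly, with $\bar q$ arising from the $P$-coefficients of the minimal polynomial of $a$ over $A.P$.

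By \cref{induced structure}, the induced $L$-structure on $P$ inside $\M$ coincides with the intrinsic $L$-structure of $P \models T$, so the $L$-isomorphism type of $P_B$ over $P_A$ is determined by $\tp(\bar q / P_A)$, a complete $L$-type in finitely many variables; by $\lambda$-stability of $T$, there are at most $\lambda$ such types. Given (i), the compositum $A.P_B$ has its field structure over $A$ determined by the field extension $P_A \subseteq P_B$ (via $A \forkindep^l_{P_A} P$ and \cref{linear disjointedness tensor products}), and the field type of $a$ over $A.P_B$ takes at most $|A.P_B| + \aleph_0 \leq \lambda$ values in $\ACF$. Multiplying gives $|S_1(A)| \leq \lambda$, so $\ACF_T$ is $\lambda$-stable. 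The main obstacle is the case analysis establishing the finite-generation claim for $P_B$ over $P_A$; once that is in hand, the remaining counting follows routinely from $\lambda$-stability of $T$ and the structure of 1-types in $\ACF$.
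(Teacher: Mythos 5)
Your proposal is essentially correct, and it takes a genuinely different route from the paper. The paper's proof does not pass through quantifier elimination or the finite-generation of $P_B$ at all: it splits on whether $a$ is in $\overline{P(C)}$ (for an \emph{arbitrary} small set $C$), then for the algebraic case uses \cref{stably embedded} to produce a set $D\subseteq P$ of size $\le\lambda$ over which every $C$-definable subset of $P^n$ is definable, and finally maps each type extending a fixed ``$\exists y\in P$'' formula to $S_n^T(D)$ by sending it to $\tp^T(b/D)$ for a witness $b$; the finite roots of the polynomial give finite fibers. You instead first reduce to $A$ an acl-closed D-closed subfield (so $A$ is a genuine $L^f$-substructure and $P_A=A\cap P$), which lets you avoid stable embeddedness entirely: types over $A$ restrict directly to $T$-types over $P_A$, and the remaining data is just the $\ACF$-type of $a$ over $A.P_B$. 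Your key structural ingredient — that $P_B$ is $L$-generated over $P_A$ by a finite tuple $\bar q$ — has no analogue in the paper's argument and carries the real weight. It does hold, but the ``handled similarly'' in Case 3 hides a genuine verification: one must check that $R=A.Q(a)$ (with $Q$ the $L$-closure of $P_A(\bar q)$) satisfies $R\forkindep^l_Q P$ and $R\cap P=Q$, which requires observing that the minimal polynomial of $a$ over $A.Q$ already equals the one over $A.P$ (so $R\otimes_Q P\cong (A.P)[x]/(f)\cong R.P$ via \cref{linear disjointedness tensor products} and base monotonicity). You should also phrase the decomposition in terms of \cref{elementary map} (partial elementary maps) rather than just \cref{isomorphism}, though after Morleyization these coincide. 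The trade-off: your route is more self-contained given \cref{quantifier elimination} and avoids \cref{stably embedded}, but requires the D-closure bookkeeping and the tensor-product lemma; the paper's route is shorter and works directly over arbitrary $C$, at the cost of invoking stable embeddedness. Both ultimately bound types by $|S^T_n(\cdot)|\cdot\lambda$.
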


\begin{proof}
    Suppose $T$ is $\lambda$-stable, we can assume that $\abs{T}\le \lambda$ by replacing $T$ with an interdefinable theory (see e.g. \cite[Exercise 5.2.6]{tent_ziegler_2012}).
    Let $C\subseteq \M$ be a subset with $\abs{C}\le \lambda$, we need to prove that $\abs{S_1^{\ACF_T}(C)}\le \lambda$, where $S_1^{\ACF_T}(C)$ is the space of types in one variable over $C$.
    First we will prove that all elements in $\M\setminus\overline{P(C)}$ have the same type over $C$ in $\ACF_T$.
    Suppose $a_0,a_1\in\M\setminus\overline{P(C)}$, that is both $a_0$ and $a_1$ are transcendental over $P(C)$.
    There is an isomorphism of fields $P(C,a_0)\to P(C,a_1)$ given by fixing $P(C)$ pointwise and mapping $a_0\mapsto a_1$.
    By \cref{extend to automorphism}, we can extend this map to an automorphism of $\M$, so $a_0\equiv_C a_1$ in $\ACF_T$.
    
    It remains to show that there are at most $\lambda$ types in $\overline{P(C)}$.
    Any element of $\overline{P(C)}$ solves some non-zero polynomial of the form $q(x;b,c)$ with $b\in P^n$ and $c\in C^m$, and in particular satisfies
    $$\phi(x;c)=\exists y\in P\ (q(x;y,c)=0\land \exists x' q(x';y,c)\ne 0).$$
    Thus, any type in $\overline{P(C)}$ contains some formula $\phi(x;c)$ as above.
    There are at most $\lambda$ formulas in $L^P$ with parameters from $C$, so it is enough to prove that there are at most $\lambda$ types that contain any given formula $\phi(x;c)$ as above.
    
    First of all, $P$ is stably embedded in $\M$ (\cref{stably embedded}), so every $C$-definable subset of $P^n$ in $\ACF_T$ is also definable in $\ACF_T$ with parameters from $P$. 
    Let $D\subseteq P$ be the set of all the parameters needed to define every $C$-definable subset of $P^n$.
    There are at most $\lambda$ definable subsets of $P^n$ over $C$, so $\abs{D}\le \lambda$.
    
    Let $[\phi]\subseteq S_1^{\ACF_T}(C)$ be the set of types implying $\phi(x;c)$. We will construct a map $\rho:[\phi]\to S^T_n(D)$ such that $\rho$ has finite fibers.
    Because $T$ is $\lambda$-stable, $\abs{S^T_n(D)}\le \lambda$, so this will imply $\abs{[\phi]}\le \lambda$ as needed.
    
    For any type $p(x)\in [\phi]$, choose some realization $a\vDash p$. 
    In particular, $\vDash \phi(a;c)$, so we can choose some $b\in P^n$ such that $q(x;b,c)$ is non-zero and $q(a;b,c)=0$.
    Define $\rho(p)=\tp^T(b/D)$.
    Suppose $p_0,p_1\in[\phi]$ and $\rho(p_0)=\rho(p_1)$, that is, if $a_i,b_i$ are the specific elements we chose for $p_i$ ($i=0,1$), then $b_0\equiv_D b_1$ in $T$.
    There is an automorphism of $P$ over $D$ mapping $b_0\mapsto b_1$, which can be extended by \cref{extend to automorphism} to an automorphism of $\M$ over $D$, so $b_0\equiv_D b_1$ in $\ACF_T$. 
    We want to prove that $b_0\equiv_C b_1$ in $\ACF_T$.
    Suppose $b_0$ belongs to some $C$-definable set, we can assume that it is a subset of $P^n$ because $b_0\in P^n$.
    By the construction of $D$, this $C$-definable subset of $P^n$ is also $D$-definable in $\ACF_T$, so $b_1$ belongs to it as $b_0\equiv_D b_1$ in $\ACF_T$.
    
    Let $\sigma\in \mathrm{Aut}(\M/C)$ be an automorphism mapping $b_0$ to $b_1$.
    In particular $q(\sigma(a_0);b_1,c)=0$, thus $a_0$ has the same type over $C$ as a root of $q(x;b_1,c)$, specifically $\sigma(a_0)$.
    It follows that every type in the fiber of $\rho(p_1)$ is a type over $C$ of a root of $q(x;b_1,c)$, however $q(x;b_1,c)$ is non-zero, so it has only finitely many roots. 
    Thus, $\rho$ has finite fibers.
\end{proof}

 We can apply \cref{lambda stability} to specific $\lambda$'s to give another proof of \cref{stable}.
 We also get the following corollaries:
 
\begin{corollary}\label{superstable}
    If $T$ is superstable, then $\ACF_T$ is superstable.
\end{corollary}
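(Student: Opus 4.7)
The plan is to derive \cref{superstable} as an essentially immediate corollary of \cref{lambda stability}, using the standard characterization of superstability as $\lambda$-stability for all sufficiently large cardinals. Recall that $T$ is superstable iff $T$ is $\lambda$-stable for every $\lambda \geq 2^{|T|}$.

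Assuming $T$ is superstable, I would first fix any $\lambda \geq 2^{|T|}$ and note that $T$ is $\lambda$-stable by hypothesis. Then, by \cref{lambda stability}, the theory $\ACF_T$ is also $\lambda$-stable. Next, I would observe that the language $L^P$ of $\ACF_T$ satisfies $|L^P| = |L| + 1$, and hence $|\ACF_T| = |T|$ (once $|T|$ is infinite, which is the only interesting case). Therefore $2^{|\ACF_T|} = 2^{|T|}$, so $\ACF_T$ is $\lambda$-stable for every $\lambda \geq 2^{|\ACF_T|}$, which is exactly the definition of superstability.

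The argument is bookkeeping on cardinals; the only point requiring any care is checking that the language expansion from $L$ to $L^P$ (together with the Delon functions, which are definable in $L^P$ and therefore do not really enlarge the signature up to inter-definability) does not push $|\ACF_T|$ past $|T|$. Since $T$ is assumed to have an infinite language in the relevant cases (or else superstability is trivially preserved because $2^{|T|} = 2^{\aleph_0}$ in the bound), this is automatic and there is no genuine obstacle. No new model-theoretic input beyond \cref{lambda stability} is needed.
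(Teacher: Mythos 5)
Your proof is correct and is essentially the route the paper intends: the paper states \cref{superstable} as an immediate consequence of \cref{lambda stability} without spelling out the cardinal bookkeeping. One small simplification worth noting: you do not actually need to verify that $|\ACF_T| = |T|$, since superstability is equivalent to stability in all $\lambda$ above \emph{some} cardinal, and taking that threshold to be $2^{|T|}$ already suffices for $\ACF_T$; the careful comparison of language sizes is harmless but not required.
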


\begin{corollary} \label{omega-stable}
    If $T$ is $\omega$-stable, then $\ACF_T$ is $\omega$-stable.
\end{corollary}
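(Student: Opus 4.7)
The plan is to obtain this as an immediate specialization of \cref{lambda stability}. Recall that $T$ is $\omega$-stable by definition precisely when $T$ is $\lambda$-stable for $\lambda = \omega$, and similarly for $\ACF_T$. So the strategy is simply to apply the $\lambda$-stability transfer theorem with this particular choice of cardinal.

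More concretely, I would argue as follows. Assume $T$ is $\omega$-stable. Then $T$ is $\lambda$-stable for $\lambda = \omega$, so by \cref{lambda stability}, $\ACF_T$ is $\omega$-stable. There is no real obstacle here; all the work has already been done in the proof of \cref{lambda stability}, where the two ingredients are (i) the stable embeddedness of $P$ (\cref{stably embedded}), which allows $C$-definable subsets of $P^n$ to be captured by a small parameter set $D \subseteq P$ with $|D| \leq \lambda$, and (ii) the fact that types in $\overline{P(C)}$ are controlled up to finitely many choices by types in $P$ via a minimal polynomial, while all elements transcendental over $P(C)$ share a single type by \cref{extend to automorphism}. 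These facts specialize without change to the countable case.

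The only minor point worth noting is that the hypothesis $|T| \leq \lambda$ used in the proof of \cref{lambda stability} (obtained there by replacing $T$ up to inter-definability) holds automatically in the $\omega$-stable setting, since $\omega$-stability already forces the language to be at most countable up to inter-definability. So the specialization is genuinely immediate, and no additional argument is required.
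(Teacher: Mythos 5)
Your proposal matches the paper exactly: the paper presents \cref{omega-stable} as an immediate consequence of \cref{lambda stability} specialized to $\lambda = \omega$, with no further argument needed. Your remark about $|T| \leq \lambda$ holding up to inter-definability is the same point already made inside the proof of \cref{lambda stability} (via the citation to Tent--Ziegler), so nothing is missing.
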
 

\begin{corollary}
    $\ACF_\ACF$ is $\omega$-stable, see \cref{tuples of ACF} for an extended application of this result.
\end{corollary}

\begin{remarkcnt}
    By \cite{Poizat1983}, $\ACF_\ACF$ is a \emph{belle pair} (see there for the definition), and it is stable.
    In \cite{BENYAACOV2003235}, the notion of belle pairs was expanded to \emph{lovely pairs} and a description of non-forking independence was given.
    When considering pairs of ACF, the description of non-forking independence in \cref{kim independence} is slightly different from the description given in \cite[Proposition 7.3]{BENYAACOV2003235} --- instead of the condition $A.P\forkindep^l_{M.P} B.P$ they have $A.P \forkindep^\ACF_{M.P} B.P$.
    However, in this case the conditions are equivalent, as can be seen in \cite[Corollary 6.2]{MARTIN_PIZARRO_2020}.
\end{remarkcnt}

\subsection{NIP}
We will prove that if $T$ is NIP, then $\ACF_T$ is NIP. 
First we will define the notions of a NIP formula, type and theory, and present some basic facts based on \cite{simon_2015} and \cite{kaplan2014dp-rank}.

\begin{definition} \label{definition NIP}
  Suppose that $T$ is some theory.
  A formula $\phi(x,y)$ has the \emph{independence property (IP)} if there is a sequence $(a_i)_{i<\omega}$ (in a model of $T$) such that for every $s\subseteq \omega$ the set $\set{\phi(a_i,y) \mid i\in s}\cup\set{\lnot\phi(a_i,y)\mid i\notin s}$ is consistent.
  
  A partial type $\pi(x)$ has \emph{IP} if there is a formula $\phi(x,y)$ and a sequence $(a_i)_{i<\omega}$ of realizations $a_i\vDash \pi(x)$ such that for every $s\subseteq \omega$ the set $\set{\phi(a_i,y)\mid i\in s}\cup\set{\lnot\phi(a_i,y)\mid i\notin s}$ is consistent.
  Otherwise, $\pi(x)$ is \emph{NIP}.
  
  The theory $T$ has \emph{IP} if some formula has IP, or equivalently the type $x=x$ has IP. 
  Otherwise, $T$ is \emph{NIP}.
\end{definition}

\begin{fact}[{\cite[Lemma 2.7]{simon_2015}}]\label{NIP even}
    A formula $\phi(x,y)$ has IP iff there is an indiscernible sequence $(a_i)_{i<\omega}$ and a tuple $b$ such that $\models\phi(a_i,b)\iff i\text{ is even}$.
\end{fact}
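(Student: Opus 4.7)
The plan is to prove both directions separately, exploiting the symmetry between shattering (the IP condition) and the single alternating witness.

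For the ``alternating $\Rightarrow$ IP'' direction, suppose $(a_i)_{i<\omega}$ is indiscernible and $b$ is such that $\models \phi(a_i, b) \iff i$ is even. Fix any $s \subseteq \omega$ and define a strictly increasing map $\tau \colon \omega \to \omega$ by
\[
  \tau(i) = \begin{cases} 2i & i \in s, \\ 2i+1 & i \notin s. \end{cases}
\]
Since $(a_i)_{i<\omega}$ is indiscernible, $(a_{\tau(i)})_{i<\omega}$ has the same type over $\emptyset$ as $(a_i)_{i<\omega}$, so there is an automorphism $\sigma$ of the monster taking $(a_{\tau(i)})_{i<\omega}$ to $(a_i)_{i<\omega}$. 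Set $b_s = \sigma(b)$. Then
\[
  \models \phi(a_i, b_s) \iff \models \phi(a_{\tau(i)}, b) \iff \tau(i) \text{ is even} \iff i \in s,
\]
witnessing IP.

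For the ``IP $\Rightarrow$ alternating'' direction, suppose $\phi(x,y)$ has IP witnessed by a sequence $(a_i)_{i<\omega}$. The key point is that for every finite $n$ and every $s_0 \subseteq n$, the formula
\[
  \psi_{n,s_0}(x_0, \dots, x_{n-1}) := \exists y \Big( \bigwedge_{j \in s_0} \phi(x_j, y) \wedge \bigwedge_{j \in n \setminus s_0} \neg \phi(x_j, y) \Big)
\]
is satisfied by every increasing tuple from $(a_i)_{i<\omega}$. By the standard Ramsey-plus-compactness extraction (Erd\H{o}s--Rado), we obtain an indiscernible sequence $(a_i')_{i<\omega}$ whose EM-type over $\emptyset$ is contained in the EM-type of $(a_i)_{i<\omega}$; in particular each $\psi_{n,s_0}$ still holds on $(a_i')_{i<\omega}$. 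A compactness argument on $y$ then yields, for each $s \subseteq \omega$, some $b_s$ with $\models \phi(a_i', b_s) \iff i \in s$; applying this to $s = \{i < \omega : i \text{ even}\}$ produces the desired $b$.

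The main (and only real) obstacle is the extraction step in the second direction: one has to verify that shattering, although defined as a property of an infinite sequence, is encoded by the family of finitary formulas $\psi_{n,s_0}$ so that it survives passage to the extracted indiscernible sequence. Once this is observed, both directions are essentially one-line arguments.
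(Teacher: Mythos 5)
The paper treats this as a black-box citation to \cite[Lemma 2.7]{simon_2015} and gives no proof of its own, so there is no in-paper argument to compare against. Your proof is correct and is essentially the standard argument found in Simon's book. One small notational quibble in the ``IP $\Rightarrow$ alternating'' direction: you say the EM-type of $(a_i')$ is ``contained in the EM-type of $(a_i)$,'' but $(a_i)$ need not be indiscernible, so its ``EM-type'' isn't a single complete type; the precise statement is that $(a_i')$ is \emph{based on} (or \emph{finitely realized in}) $(a_i)$, meaning every formula in the (complete) EM-type of $(a_i')$ is realized by some increasing tuple from $(a_i)$. From this and the fact that $\psi_{n,s_0}$ holds on \emph{all} increasing tuples from $(a_i)$, you conclude $\psi_{n,s_0}$ lies in the EM-type of $(a_i')$ by noting that otherwise $\lnot\psi_{n,s_0}$ would lie there, hence be realized by some increasing tuple from $(a_i)$, a contradiction. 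With that one clarification the argument is complete; the first direction (re-indexing along the strictly increasing $\tau$ and transporting $b$ by an automorphism) is exactly right as written.
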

 
\begin{fact} [{\cite[Proposition 2.11]{simon_2015}}]\label{NIP singleton}
    A theory $T$ is NIP iff no formula $\phi(x,y)$ with $\abs{y}=1$ has IP.
\end{fact}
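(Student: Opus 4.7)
The plan is as follows. The forward direction is immediate from the definitions: if $T$ is NIP then certainly no formula, in particular none with $|y|=1$, has IP modulo $T$.

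For the converse, assume every formula $\phi(x;y)$ with $|y|=1$ is NIP and suppose for contradiction that $T$ is not NIP. Then some $\phi(x;y_1,\ldots,y_n)$ has IP; choose such a formula with $n$ minimal, so $n\geq 2$. Apply \cref{NIP even} to obtain an indiscernible sequence $(a_i)_{i<\omega}$ and a tuple $b=(b_1,\ldots,b_n)$ with $\models\phi(a_i,b) \iff i$ is even. The goal is to produce, from this data, a formula $\widetilde\phi(x';y')$ with $|y'|<n$ that also has IP, contradicting the minimality of $n$ (and, when iterated to $|y'|=1$, contradicting the hypothesis).

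The natural candidate is the re-partitioning $\widetilde\phi\bigl((x,y_1,\ldots,y_{n-1});\,y_n\bigr):=\phi(x;y_1,\ldots,y_n)$, together with the extended sequence $c_i:=(a_i,b_1,\ldots,b_{n-1})$ and the single parameter $d:=b_n$. By construction $\models\widetilde\phi(c_i;d) \iff i$ is even, so to invoke the reverse implication of \cref{NIP even} and conclude IP for $\widetilde\phi$ it suffices to know that $(c_i)_{i<\omega}$ is indiscernible, i.e.\ that $(a_i)$ is indiscernible over $b':=(b_1,\ldots,b_{n-1})$.

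The key obstacle is that a naive extraction of a $b'$-indiscernible subsequence from $(a_i)$ destroys the alternation: all terms of a $b'$-indiscernible sequence realize the same type over $b'$, so $\phi(a'_i,b',b_n)$ becomes constant in $i$. The standard remedy, following the proof of \cite[Proposition 2.11]{simon_2015}, is to start from a much longer sequence (by compactness, extending the IP tree), apply an Erdős--Rado / Ramsey extraction to secure $b'$-indiscernibility, and exploit the symmetry of IP across the $x\mid y$ partition so that the alternation can be re-encoded on the $y$-side before the collapse occurs. Iterating this reduction step-by-step brings $|y|$ down to $1$, contradicting the hypothesis and finishing the proof.
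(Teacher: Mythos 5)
Your overall plan is right---forward direction trivial, take $\phi(x;y)$ with IP and $|y|=n$ minimal, extract the alternating indiscernible configuration from \cref{NIP even}, re-partition by moving $b'=(b_1,\dots,b_{n-1})$ into the sequence side, and derive a formula with a shorter parameter tuple having IP. The gap is in the step you call the ``standard remedy''. Erd\H{o}s--Rado/Ramsey extraction is the \emph{wrong} tool here, and not merely for the reason you flag: the whole point is that you cannot replace $(a_i)$ by an arbitrary $b'$-indiscernible subsequence (which collapses the $\phi$-alternation over $b$), you must keep a \emph{tail} of the \emph{same} sequence. The vague appeal to ``symmetry so the alternation can be re-encoded on the $y$-side before the collapse occurs'' does not identify an actual argument and would not survive being written out.

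What is actually needed is \cref{NIP end segment}. First observe that the trivial partial type $\pi(y')=\{y'=y'\}$ in the $(n-1)$-tuple of variables $y'$ is NIP: a formula $\psi(y';z)$ witnessing IP for $\pi$ would, by the symmetry of IP under swapping the two sides of the partition, give a formula with a parameter tuple of length $n-1<n$ having IP, contradicting minimality of $n$. Next, stretch $(a_i)_{i<\omega}$ by compactness to a $\phi$-alternating indiscernible sequence of length $|T|^+$ over the same $b$. Since $\pi$ is NIP and $b'$ realizes $\pi$, \cref{NIP end segment} yields an end segment of the sequence that is indiscernible over $b'$; an end segment still alternates against $b_n$. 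Now $(a_i\,b')_i$ restricted to that end segment is indiscernible, $\widetilde\phi(a_i\,b';b_n)$ alternates, and \cref{NIP even} gives IP for $\widetilde\phi$ with $|y_n|=1$, contradicting the hypothesis. Note also that the paper does not prove this statement---it is cited from \cite[Proposition 2.11]{simon_2015}---so the comparison is to the standard argument, which indeed rests on the end-segment lemma and not on Ramsey extraction.
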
 

\begin{fact}[{\cite[Proposition 2.6]{kaplan2014dp-rank}}]\label{NIP end segment}
  Suppose $\pi(x)$ is a partial NIP type over $A$ and $B$ is a set of realizations of $\pi(x)$.
  If $I=(a_i)_{i<\abs{T}^+ +\abs{B}^+}$ is an $A$-indiscernible sequence, then some end segment of $I$ is indiscernible over $AB$.
\end{fact}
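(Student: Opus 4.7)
My plan is to adapt the standard NIP end-segment argument to the partial-type setting, using NIP of $\pi$ together with the symmetry of the independence property to transfer alternation bounds to formulas with parameters from $B$.

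First, I would establish a uniform alternation bound: for every formula $\phi(x_1,\dots,x_n;\bar y)$ there exists $N(\phi)<\omega$ such that, for every $\bar b\in B^{|\bar y|}$ and every $A$-indiscernible sequence of $n$-tuples $(\bar c_k)_{k<\omega}$, the truth value of $\phi(\bar c_k;\bar b)$ alternates at most $N(\phi)$ times. A failure of this bound would, by compactness and Ramsey extraction, produce an indiscernible sequence witnessing IP for $\pi$, via VC-symmetry between the two variable blocks (using that each coordinate of $\bar b$ realizes $\pi$), contradicting the hypothesis through a partial-type analogue of \cref{NIP even}.

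Next, I would recast $AB$-indiscernibility of an end segment as stability of every $\phi(\bar x;\bar b)$ along that segment: for each pair $(\phi,\bar b)$ with $\bar b\in B^{<\omega}$, I seek an ordinal $\alpha_{\phi,\bar b}<\kappa$ such that $\phi(a_{i_1},\dots,a_{i_n};\bar b)$ has constant truth value on all increasing $n$-tuples $i_1<\dots<i_n$ from $[\alpha_{\phi,\bar b},\kappa)$. If no such $\alpha_{\phi,\bar b}$ existed, one could pick cofinally in $\kappa$ a sequence of disjoint increasing $n$-tuples with alternating truth values under $\phi(\cdot;\bar b)$ and then perform an Erd\H{o}s-Rado style extraction to obtain an $A$-indiscernible subsequence of $n$-tuples preserving the alternating labeling, contradicting the bound $N(\phi)$. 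The cardinal arithmetic then closes the proof: there are at most $|T|\cdot|B|^{<\omega}=|T|+|B|$ pairs $(\phi,\bar b)$, while $\kappa=|T|^++|B|^+$ is regular and strictly larger, so $\alpha_0:=\sup_{\phi,\bar b}\alpha_{\phi,\bar b}<\kappa$ and the end segment $(a_i)_{\alpha_0\le i<\kappa}$ is indiscernible over $AB$.

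The main obstacle is the extraction step in the second paragraph: a naive Ramsey extraction tends to collapse alternating labels when producing an indiscernible subsequence, so one must perform a back-and-forth interleaving -- separating the cofinal sequence into its ``true'' and ``false'' parts and alternately picking from each side -- in order to force the prescribed alternating pattern onto the extracted $A$-indiscernible. A secondary subtlety is justifying the partial-type symmetry of IP used in the alternation bound: one must ensure that placing the $\pi$-realization $\bar b$ in the parameter slot rather than in the shattered slot still yields a failure of NIP when alternations are unbounded, which amounts to reindexing the shattering witness and possibly passing to a Morleyized expansion that absorbs the $\pi$-restricted formulas.
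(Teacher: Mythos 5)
The paper cites this statement from the reference without giving a proof, so I assess your argument on its own terms. Your skeleton --- a uniform alternation bound for each formula against parameters from $B$, then per-$(\phi,\bar b)$ stabilization ordinals and a cofinality count --- is the standard, correct route, and your cardinal arithmetic checks out. A remark on the second stage: the Erd\H{o}s--Rado extraction you discuss at length is not needed. If $(a_i)_{i<\kappa}$ is $A$-indiscernible and you choose pairwise disjoint increasing blocks $\bar k^0<\bar k^1<\cdots$, the resulting sequence of tuples $(a_{\bar k^j})_j$ is \emph{automatically} $A$-indiscernible, because any two increasing selections of such blocks produce order-isomorphic increasing subtuples of $(a_i)$. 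Your ``back-and-forth interleaving'' already hands you an $A$-indiscernible sequence along which $\phi(-;\bar b)$ alternates unboundedly, with no extraction step and hence no label-collapsing worry; the obstacle you spend a paragraph on is a phantom.

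The genuine gap is in the first stage, the alternation bound --- precisely in the ``secondary subtlety'' you flag. The hypothesis places the $\pi$-realizations $\bar b$ in the \emph{parameter} slot, whereas the definition of an NIP partial type demands that the \emph{shattered sequence} consist of $\pi$-realizations. The transfer requires: (i) noting that the witnesses to each finite shattering pattern, obtained from the alternating $A$-indiscernible $(\bar c_k)$ by applying automorphisms over $A$, are $\Aut(\M/A)$-conjugates of $\bar b$ and hence lie in $\pi(\M)^m$; (ii) a Sauer--Shelah / VC-duality argument carried out \emph{inside} the set $\pi(\M)^m$ to produce a shattered sequence of $m$-tuples of $\pi$-realizations; and (iii) a reduction from ``$\pi^m$ has IP'' to ``$\pi$ has IP.'' Your proposed remedy of ``passing to a Morleyized expansion that absorbs the $\pi$-restricted formulas'' does not accomplish this: $\pi$ is only type-definable, so Morleyizing the theory does not turn $\pi(\M)$ into a definable set, and naming $\pi(\M)$ by a new predicate changes the theory in a way that is not controlled by the assumption that $\pi$ is NIP in the original theory. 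As written, this leaves a hole at the heart of the argument, and it needs the duality-inside-$\pi$ route (i)--(iii) rather than Morleyization to close it.
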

 
First we need to show that $P$ is NIP as in \cref{definition NIP}
 
\begin{lemma} \label{P NIP}
   If $T$ is NIP, then  $P$ is NIP, i.e. the partial type $x\in P$ is NIP.
\end{lemma}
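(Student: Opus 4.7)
The plan is to argue by contradiction, using uniform stable embeddedness (\cref{uniformly stably embedded}) together with the induced structure lemma (\cref{induced structure}) to reduce IP of the partial type $x\in P$ to IP of some $L$-formula in the intrinsic structure on $P$.

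Assume toward contradiction that $x\in P$ has IP in $\ACF_T$. Then there is an $L^P$-formula $\phi(x,y)$, a sequence $(a_i)_{i<\omega}$ of elements of $P$, and for each $s\subseteq \omega$ a parameter tuple $b_s\in \M$ such that $\M\models\phi(a_i,b_s)$ iff $i\in s$. The first step is to replace the external parameters $b_s$ with parameters from $P$. By uniform stable embeddedness of $P$, there is a single $L^P$-formula $\psi(x,z)$ such that for every $b\in \M^{|y|}$ there exists $c\in P^{|z|}$ with $\phi(P,b)=\psi(P,c)$. Applying this to each $b_s$ I obtain $c_s\in P^{|z|}$ with $\M\models\psi(a_i,c_s)$ iff $i\in s$.

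The second step is to transfer from $L^P$ to $L$. By \cref{P-formula}, $\psi(x,z)$ is equivalent modulo $\ACF_T$ to a $P$-formula $\psi'(x,z)$. As in the proof of \cref{induced structure}, since both tuples of free variables $x,z$ are intended to be instantiated in $P$, I can delete every occurrence of the predicate ``$P$'' in $\psi'$ (replacing ``$w\in P$'' by $w=w$) to obtain an $L$-formula $\tilde\psi(x,z)$ with the property that, for all tuples $a,c$ from $P$,
\[
\M\models\psi(a,c)\;\Longleftrightarrow\;P\models\tilde\psi(a,c).
\]
Consequently $(a_i)_{i<\omega}$ and $(c_s)_{s\subseteq \omega}$ witness IP of $\tilde\psi$ inside the $L$-structure $P$. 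Since $P\models T$ and NIP is preserved when passing to any extension of the theory, this contradicts the NIP assumption on $T$.

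The only nontrivial point in this plan is the uniformity of the formula $\psi$, i.e.\ the passage from the bare stable embeddedness of \cref{stably embedded} to the uniform version \cref{uniformly stably embedded}; this is the standard compactness argument indicated in the remark following \cref{stably embedded}. Everything else is an essentially formal translation between $L^P$-definability (with arbitrary parameters) and $L$-definability on $P$, made possible by \cref{P-formula,induced structure}.
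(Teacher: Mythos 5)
Your proof is correct and follows essentially the same route as the paper: assume IP of $x\in P$, use uniform stable embeddedness (\cref{uniformly stably embedded}) to move the parameters $b_s$ into $P$ with a single uniform formula, then use \cref{induced structure} (or equivalently \cref{P-formula} plus stripping the predicate $P$) to pass to an $L$-formula witnessing IP in the intrinsic $L$-structure on $P$, contradicting NIP of $T$. The only cosmetic difference is that you unfold the proof of \cref{induced structure} rather than citing it as a black box.
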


\begin{proof}
    Suppose $x\in P$ has IP.
    Then there are a sequence $(a_i)_{i<\omega}$ with $a_i\in P$ and a formula $\phi(x,y)$, such that for every $s\subseteq \omega$, there exists $b_s\in \M$ such that $\M\vDash \phi(a_i,b_s)\iff i\in s$.
    By \cref{uniformly stably embedded}, $P$ is uniformly stably embedded in $\M$, so there exists a formula $\psi(x,z)\in L^P$ and parameters $c_s\in P$ for every $s\subseteq \omega$, such that $\phi(P,b_s)=\psi(P,c_s)$, and in particular $\M\vDash \psi(a_i,c_s)\iff i\in s$.
    
    The induced structure on $P$ is interdefinable with the internal $L$-structure of $P$ (\cref{induced structure}), so there is some formula $\psi'(x,z)\in L$ that defines the same set in $P$ as $\psi(x,z)$, in particular $P\vDash \psi'(a_i,c_s)\iff i\in s$.
    The formula $\psi'(x,y)$ has IP in $P\vDash T$, in contradiction to $T$ being NIP.
\end{proof}

\begin{theorem}\label{NIP}
    If $T$ is NIP, then $\ACF_T$ is NIP.
\end{theorem}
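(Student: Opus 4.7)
By \cref{NIP singleton} it suffices to rule out IP for formulas $\phi(x,y)$ with $|y|=1$. I would assume for contradiction that some such $\phi$ has IP and then, by the standard compactness and Erd\H{o}s--Rado argument, fix a sufficiently long indiscernible sequence $(a_i)_{i<\lambda}$ together with an element $b\in\M$ such that $\phi(a_i,b)$ holds exactly when $i\in E$, where $E\subseteq\lambda$ is a chosen set with $0\in E$, $1\notin E$, and both $E$ and $\lambda\setminus E$ cofinal in $\lambda$. The plan is then to split on whether $b$ is transcendental or algebraic over $P(a_{<\lambda})$, deriving a contradiction in each case.

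In the transcendental case I would mimic the argument from \cref{lambda stability}. Indiscernibility produces an automorphism $\sigma$ of $\M$ with $\sigma(a_0)=a_1$, and since $\sigma$ preserves $P$ setwise the image $b':=\sigma(b)$ is still transcendental over $P(a_1)$. Applying \cref{extend to automorphism} to the field isomorphism fixing $P(a_1)$ and sending $b\mapsto b'$ yields $b\equiv_{a_1}b'$, so $\phi(a_1,b)\iff\phi(a_1,b')\iff\phi(a_0,b)$, contradicting $0\in E$ and $1\notin E$.

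In the algebraic case, $b$ satisfies a nontrivial polynomial $q(y;\bar p,\bar a)=0$ with $\bar a=(a_{j_1},\ldots,a_{j_n})$ a finite sub-tuple of the sequence and $\bar p\in P$ finite. The tail $(a_i)_{i>j_n}$ is indiscernible over $\bar a$ by the usual order-type argument, and I would apply \cref{NIP end segment} twice along this tail: first with the partial type $y\in P$ (NIP by \cref{P NIP}) and parameter set $\bar p$, then with the algebraic partial type $q(y;\bar p,\bar a)=0$ (which is NIP because it has only finitely many realizations, via a pigeonhole argument) and parameter $b$. The outcome is an end segment of the sequence that is indiscernible over $\bar a\bar p b$; on this end segment $\phi(a_i,b)$ must be constant in $i$, contradicting the fact that both $E$ and its complement meet every cofinal subset of $\lambda$.

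I expect essentially all of the work to be in Case B. One must take $\lambda$ large enough (for instance a regular cardinal beyond $|T|^+$) so that the two successive applications of \cref{NIP end segment} still leave a cofinal end segment; one must verify that an algebraic partial type over a small set is always NIP; and one must arrange the choice of $b$ and $E$ so that the alternation genuinely survives into the final end segment. Case A is almost immediate from the uniqueness of the generic transcendental type already exploited in the proof of \cref{lambda stability}.
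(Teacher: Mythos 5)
Your argument is correct and follows the same overall structure as the paper's: split on whether the testing element (your $b$, the paper's $c$) is transcendental or algebraic over $P(I)$.

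In the transcendental case your argument is more direct than the paper's. You produce one automorphism $\sigma$ with $\sigma(a_0)=a_1$, observe that $b':=\sigma(b)$ is still transcendental over $P(a_1)$, and invoke \cref{extend to automorphism} to conclude $a_1b\equiv a_1b'$; chaining the two equivalences gives $\phi(a_0,b)\leftrightarrow\phi(a_1,b)$, contradicting $0\in E$, $1\notin E$. The paper instead uses Ramsey to extract a sequence $I'$ indiscernible over $c$ with the same EM-type as $I$, transports this back to $I$ to get some $c'$ (transcendental over $P(I)$) over which $I$ is indiscernible, and then applies \cref{extend to automorphism} over all of $P(I)$ to send $c'\mapsto c$. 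The paper thereby proves the stronger statement that the whole of $I$ is indiscernible over $c$, but the localized version you give is sufficient and arguably cleaner.

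In the algebraic case your argument and the paper's coincide until one point. After obtaining, via \cref{NIP end segment} applied to the NIP type $y\in P$, an end segment $I''$ indiscernible over $I_0b$ (your $\bar a\bar p$), the paper asserts directly that $I''$ is indiscernible over $\acl(I_0b)$, hence over $c$. You instead apply \cref{NIP end segment} a second time, now with the algebraic (hence trivially NIP) partial type $q(y;\bar p,\bar a)=0$ and the singleton $\{b\}$. This second application is in fact redundant: if $J$ is any infinite $A$-indiscernible sequence and $d\in\acl(A)$, then $J$ is already indiscernible over $Ad$ without passing to an end segment. (Sketch: fix the isolating algebraic formula $\chi$ over $A$ with realizations $d=d_1,\dots,d_n$, and for a formula $\psi$ over $A$ consider $\theta(\bar x,\bar x'):=\forall y(\chi(y)\to(\psi(\bar x,y)\leftrightarrow\psi(\bar x',y)))$, which is over $A$. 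By indiscernibility $\theta$ is constant on disjoint increasing pairs; if it were constantly false, the profiles $\{i:\models\psi(\bar a,d_i)\}\subseteq[n]$ for pairwise disjoint increasing tuples would be pairwise distinct, contradicting the finiteness of $2^{[n]}$.) So your second use of \cref{NIP end segment} proves from scratch the fact the paper takes as known, which is a perfectly valid, if slightly longer, route. Your precautions on choosing $\lambda$ regular and $E$ cofinal on both sides are exactly what is needed to make ``the alternation survives into the final end segment'' go through cleanly.
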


\begin{proof}
    Suppose $\ACF_T$ has IP, by \cref{NIP singleton} there is some $\phi(x,y)$ with $\abs{y}=1$ that has IP.
    Using \cref{NIP even} and compactness, there is an indiscernible sequence $I=(a_i)_{i<\abs{T}^+}\subseteq \M$ and some $c\in \M$ such that $\M\vDash \phi(a_i,c)\iff i\text{ is even}$.
    
    First consider the case where $c$ is transcendental over $P(I)$.
    In particular, $c$ is transcendental over $P(a_0)$ and $P(a_1)$.
    There is an automorphism mapping $a_0$ to $a_1$, as they have the same type over the empty set.
    Apply this automorphism on $c$ to get $c'$ which is transcendental over $a_1$.
    Both $c$ and $c'$ are transcendental over $P(a_1)$, so by \cref{extend to automorphism} $c$ and $c'$ have the same type over $P(a_1)$ in ACF$_T$. 
    This is a contradiction, as we have $\vDash \phi(a_1, c')$ and $\vDash \lnot\phi(a_1, c)$.
    
    Now consider the case where $c$ is algebraic over $P(I)$.
    There is some finite subsequence $I_0\subseteq I$ and some finite tuple $b\in P$, such that $c$ is algebraic over $I_0b$.
    Let $I'\subseteq I$ be some end segment starting after $I_0$; note that $I'$ is indiscernible over $I_0$.
    As $P$ is NIP (\cref{P NIP}), by \cref{NIP end segment} there is an end segment $I''\subseteq I'$ that is indiscernible over $I_0b$.
    It follows that $I''$ is also indiscernible over $\acl(I_0b)$, and in particular over $c$, a contradiction.
\end{proof}
 
\begin{corollary} \label{acvf nip}
    Let ACVF be the theory of algebraically closed valued fields in the divisibility language, that is the language of rings with a binary relation $x|y$ signifying $v(x)<v(y)$.
    ACVF is NIP, so $\ACF_\mathrm{ACVF}$ is NIP.
\end{corollary}
 
\begin{remarkcnt}
    One could also use a counting type approach to prove preservation of NIP, similar to the proof of \cref{lambda stability}.
    This would require working in a generic extension of ZFC such that $\mathrm{ded}(\kappa)^{\aleph_0}<2^\kappa$ for some 
    infinite cardinal $\kappa$ (where $\mathrm{ded}(\kappa)$ is the supremum of cardinalities of linear orders with a dense subset of size $\le \kappa$). 
    For an expanded explanation of this approach, see \cite[Theorem II.4.10]{shelah1990classification} and \cite[Corollary 24]{Adler07introductionto}.
    
    Alternatively, one could also apply more general results, i.e., \cite[Corollary 2.5]{chernikov2011externally} and \cite[Proposition 2.5]{jahnke2019nip}, but we chose to give a direct argument.
\end{remarkcnt}

\section{Applications}\label{subfield applications}

In this section we will apply the above results to specific theories.

\subsection{Tuples of algebraically closed fields}

In this section we will consider (perhaps infinite) chains of algebraically closed fields, which, for the finite case, is a particular case of \emph{beaux uples} in the sense of \cite{bouscaren1988beaux}. 
The main result of this section is \cref{tuples of acf order type} which classifies the theories of such chains based on the order type of the chain.

\begin{definition}
   For any ordered set $I$, define $L^I=L_{\mathrm{rings}}\cup\set{P_i}_{i\in I}$ with $P_i$ unitary predicates and define the theory $\ACF^I$ expanding $\ACF$ in $L^I$, such that: 
   \begin{enumerate}
       \item Each $P_i$ is an algebraically closed field, that is strictly contained in the model.
       \item For $i<j$, $P_i\subsetneq P_j$.
   \end{enumerate}
   In particular, $\ACF^n$ is the theory of algebraically closed fields $M$, with $n$ algebraically closed subfields $P_0\subsetneq P_1\subsetneq\dots\subsetneq P_{n-1}\subsetneq M$.
\end{definition}

\begin{proposition} \label{tuples of ACF}
    Let $I$ be any ordered set.
    \begin{enumerate}
        \item The completions of $\ACF^I$ are given by fixing the characteristic, $\ACF_p^I$.
        \item Every completion of $\ACF^I$ is stable.
    \end{enumerate}
\end{proposition}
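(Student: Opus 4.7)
The plan is to reduce to the case of finite index sets by a localization argument, and to handle the finite case by induction using the earlier preservation results of the paper.

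First, I would prove by induction on $n = |F|$ that for every finite $F \subseteq I$ and every characteristic $p$ (prime or zero), the theory $\ACF_p^F$ is complete and $\omega$-stable. The base case $n = 0$ is $\ACF_p$ itself. For the inductive step, set $i^* = \max F$ and $F' = F \setminus \{i^*\}$, and identify the predicate $P_{i^*}$ of $\ACF^F$ with the predicate $P$ of \cref{delons language}. The claim is that this identification yields $\ACF^F = \ACF_{\ACF^{F'}}$. Indeed, the axioms of $\ACF_T$ with $T = \ACF^{F'}$ assert that $P$ is a model of $\ACF^{F'}$ (so $P = P_{i^*}$ is algebraically closed and carries a strict chain $P_0 \subsetneq \cdots \subsetneq P_{i^*-1} \subsetneq P$ of algebraically closed subfields); that $P_i \subseteq P$ for $i < i^*$ (axiom (3) of \cref{delons language}, since the $P_i$ are unary relations in $L^{F'} \setminus L_{\mathrm{rings}}$); and that $P \subsetneq M$ (the $[M:P] = \infty$ axiom, which in the algebraically closed case reduces to $P \neq M$). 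These are precisely the axioms of $\ACF^F$. Thus $\ACF_p^F = \ACF_{\ACF_p^{F'}}$ for each $p$, and the induction hypothesis combined with \cref{complete} and \cref{omega-stable} yields completeness and $\omega$-stability of $\ACF_p^F$.

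Second, for an arbitrary ordered set $I$, every formula and every sentence of $L^I$ mentions only finitely many $P_i$ and hence lies in $L^F$ for some finite $F \subseteq I$. The $L^F$-reduct of any model of $\ACF_p^I$ is a model of $\ACF_p^F$: the strict containments $P_i \subsetneq P_j$ for $i < j$ in $F$ transfer directly, and the strict containment $P_{\max F} \subsetneq M$ holds either by definition (if $\max F = \max I$) or via the chain $P_{\max F} \subsetneq P_j \subseteq M$ for any $j > \max F$ in $I$. For part (1), any sentence is decided by $\ACF_p^F$ (finite case), hence decided by $\ACF_p^I$; since every completion of $\ACF^I$ has some characteristic, the completions are exactly the $\ACF_p^I$. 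For part (2), if a formula $\phi \in L^F$ had the order property modulo some $\ACF_p^I$, reduction would make it unstable modulo $\ACF_p^F$, contradicting the finite case; so every formula is stable, and $\ACF^I$ is stable.

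The only delicate point is the bookkeeping for the identification $\ACF^F = \ACF_{\ACF^{F'}}$ in the induction step; the rest is routine first-order localization. Note that this argument does not yield $\omega$-stability for infinite $I$, and indeed for uncountable $I$ this should fail, since types over a countable parameter set can already distinguish membership in uncountably many distinct $P_i$.
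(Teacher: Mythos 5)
Your proof is correct and takes essentially the same approach as the paper: induction on finite index sets via the identification $\ACF^{n+1} = \ACF_{\ACF^{n}}$ together with \cref{complete} and the stability preservation results, then reduction to the finite case for arbitrary $I$ since each sentence and each instability witness involves only finitely many predicates. The paper invokes \cref{stable} rather than \cref{omega-stable} for the finite induction, but your slightly stronger version is harmless, and your explicit unpacking of the identification of axioms is a useful clarification of a point the paper leaves implicit.
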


\begin{proof}
    We will first prove for $I=n$, by induction on $n$.
    For $n=0$, $\ACF^0=\ACF$, and indeed the completions of $\ACF$ are given by fixing the characteristic and every completion $\ACF_p$ is stable.
    Suppose it is true for $n$.
    We have $\ACF^{n+1}=\ACF_{\ACF^n}$, where we denote the added predicate by $P_n$.
    By \cref{complete}, the completions of $\ACF^{n+1}$ are given by completions of $\ACF^n$, which are given by fixing the characteristic.
    Furthermore, $\ACF^{n+1}_p=\ACF_{\ACF^n_p}$, so
    by \cref{stable} every completion $\ACF^{n+1}_p$ is stable.
    
    Now consider a general ordered set $I$ and fix a characteristic $\ACF^I_p$.
    Let $\phi$ be a sentence in $L^I$ and let $I_\phi\subset I$ be the subset of indexes $i\in I$ such that $P_i$ appears in $\phi$.
    $I_\phi$ is finite, suppose $I_\phi=\set{i_0<\dots<i_{n-1}}$.
    $\ACF^n_p$ is complete, so by renaming the predicates $P_0,\dots,P_{n-1}$ to $P_{i_0},\dots,P_{i_{n-1}}$ we get that $\ACF_p^{I_\phi}$ is complete.
    Thus, $\ACF_p^{I_\phi}\vdash \phi$ or $\ACF_p^{I_\phi}\vdash \lnot\phi$, but $\ACF_p^{I_\phi}$ is a restriction of $\ACF_p^I$, so $\ACF_p^I\vdash \phi$ or $\ACF_p^I\vdash \lnot\phi$.
    The completions $\ACF^I_p$ are all the completions of $\ACF^I$, because any completion has to fix a characteristic so it must extend some $\ACF^I_p$.
    
   We need to show that every completion $\ACF^I_p$ is stable.
    If $\phi\in L^I$ was a formula witnessing instability in $\ACF^I_p$, then it would witness instability in $\ACF^{I_\phi}_p$, which would imply that $\ACF^n_p$ is unstable for $n=\abs{I_\phi}$.
\end{proof}

We will further classify the stability of $\ACF^I_p$ (when is it $\omega$-stable, superstable or totally transcendental) based on the order type of $I$.
In the case that $I$ is an ordinal, we will need the following lemma.

\begin{lemma} \label{extend automorphism tuples of acf}
  Let $\alpha$ be an ordinal and $M\vDash \ACF^\alpha$.
  Any $L^\beta$-automorphism of $P_\beta$ for $\beta<\alpha$ can be extended to an $L^\alpha$-automorphism of $\M$.
\end{lemma}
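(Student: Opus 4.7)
The plan is to build by transfinite induction on $\gamma$ with $\beta \le \gamma \le \alpha$ a coherent chain of automorphisms: for $\beta \le \gamma < \alpha$, an $L^\gamma$-automorphism $\sigma_\gamma$ of $P_\gamma$, and finally an $L^\alpha$-automorphism $\sigma_\alpha$ of $M$, each extending all the previous ones. The base step takes $\sigma_\beta = \sigma$. The goal at the top is the required extension to $M$.

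For a successor step $\gamma \mapsto \gamma+1$ with $\gamma+1 < \alpha$, observe that the structure on $P_{\gamma+1}$ with distinguished subfield $P_\gamma$ is a model of $\ACF_{\ACF^\gamma}$: both fields are algebraically closed and the proper inclusion $P_\gamma \subsetneq P_{\gamma+1}$ forces the existence of an element of $P_{\gamma+1}$ transcendental over $P_\gamma$, so $[P_{\gamma+1}:P_\gamma] = \infty$. I can therefore apply \cref{extend to automorphism} inside $P_{\gamma+1}$ with $\kappa = 1$, $A = A' = \emptyset$, and $f = \sigma_\gamma$, producing a field automorphism $\sigma_{\gamma+1}$ of $P_{\gamma+1}$ whose restriction to $P_\gamma$ is $\sigma_\gamma$. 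Since $\sigma_\gamma$ preserves each $P_i$ for $i < \gamma$ setwise and $P_i \subseteq P_\gamma$, the extension $\sigma_{\gamma+1}$ preserves $P_i$ for every $i \le \gamma$, making it an $L^{\gamma+1}$-automorphism of $P_{\gamma+1}$. The top step at a successor ordinal $\alpha = \gamma'+1$ is identical, with $M$ playing the role of $P_{\gamma+1}$.

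The limit case (including possibly $\gamma = \alpha$) is the only one not covered directly by the lemmas in the paper, and is where I expect the main, though mild, obstacle. Set $\sigma_{<\gamma} = \bigcup_{\delta < \gamma} \sigma_\delta$, a field automorphism of $P_{<\gamma} := \bigcup_{\delta < \gamma} P_\delta$, and let $M'$ be $P_\gamma$ if $\gamma < \alpha$ and $M$ if $\gamma = \alpha$. Since $M'$ is algebraically closed and contains $P_{<\gamma}$, I extend $\sigma_{<\gamma}$ first to the relative algebraic closure of $P_{<\gamma}$ inside $M'$ by uniqueness of algebraic closures, then pick a transcendence basis $T$ of $M'$ over this closure, map $T$ to itself identically, and finally extend to all of $M'$ using once more uniqueness of algebraic closures (this is precisely the argument used in the proof of \cref{extend to automorphism}, but without a distinguished predicate to worry about). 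The resulting automorphism agrees with $\sigma_{<\gamma}$ on $P_{<\gamma}$, hence on each $P_i$ with $i < \gamma$, and therefore is an $L^\gamma$-automorphism of $P_\gamma$ (or an $L^\alpha$-automorphism of $M$, when $\gamma = \alpha$).

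Running the induction up to $\gamma = \alpha$ yields the desired $L^\alpha$-automorphism of $M$ extending $\sigma$.
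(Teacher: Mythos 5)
Your proof is correct and follows essentially the same strategy as the paper's: build $\sigma_\gamma$ by transfinite induction and, at each stage, take the union of the previously constructed maps, pick a transcendence basis over the union, fix it, and extend to the algebraic closure, noting that the result preserves each $P_{\gamma'}$ setwise. The only cosmetic difference is that the paper handles successor and limit stages with one uniform recipe, whereas you split them, invoking \cref{extend to automorphism} (which internally carries out the same transcendence-basis argument) at successors and spelling the argument out at limits; both are fine.
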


\begin{proof}
    Let $\sigma_\beta$ be an automorphism of $P_\beta$, we will construct by transfinite induction on $\beta\le\gamma<\alpha$ automorphisms $\sigma_\gamma$ of $P_\gamma$, such that if $\beta\le \gamma'<\gamma<\alpha$, then $\sigma_{\gamma}$ extends $\sigma_{\gamma'}$.
    
    Let $\beta\le \gamma<\alpha$ and suppose we constructed $\sigma_{\gamma'}$ for $\beta\le \gamma'<\gamma$.
    Let $\sigma_{<\gamma}$ be the union of $\set{\sigma_{\gamma'}}_{\beta\le \gamma'<\gamma}$, $\sigma_{<\gamma}$ is a field automorphism of $P_{<\gamma}=\bigcup_{\gamma'<\gamma} P_{\gamma'}$ (if $\gamma=\gamma'+1$ is a successor ordinal, then $\sigma_{<\gamma}=\sigma_{\gamma'}$).
    Let $S$ be a transcendence basis of $P_\gamma$ over $P_{<\gamma}$, extend $\sigma_{<\gamma}$ to a field automorphism $\sigma_\gamma$ by fixing $S$ pointwise and extending to the algebraic closure. 
    For every $\gamma'<\gamma$, $\sigma_\gamma$ preserves $P_{\gamma'}$ setwise, so $\sigma_\gamma$ is an $L^\gamma$-automorphism.
    
    Once we constructed $\sigma_\gamma$ for every $\beta\le \gamma<\alpha$, we can construct $\sigma_\alpha$, an $L^\alpha$-automorphism of $M$, in a similar fashion: take $\sigma_{<\alpha}$ the union of $\set{\sigma_\gamma}_{\beta\le \gamma<\alpha}$, fix a transcendence basis of $M$ over $P_{<\alpha}$ pointwise and extend to the algebraic closure.
\end{proof}

\begin{proposition} \label{tuples of acf order type}
    For an ordered set $I$:
    \begin{enumerate}
        \item If $I$ is finite, or countable and well-ordered, then every completion of $\ACF^I$ is $\omega$-stable.
        \item If $I$ is uncountable and well-ordered, then every completion of $\ACF^I$ is totally transcendental, and in particular superstable, but not $\omega$-stable.
        \item If $I$ is not well-ordered, then no completion of $\ACF^I$ is superstable.
    \end{enumerate}
\end{proposition}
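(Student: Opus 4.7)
The plan is to prove each assertion in turn.

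For (1) with $I$ finite, I induct on $|I|$: the base $I=\emptyset$ is $\ACF$, which is $\omega$-stable; inductively $\ACF^{n+1}$ is exactly $\ACF_T$ for $T:=\ACF^n$ in the sense of \cref{delons language}, so \cref{omega-stable} yields $\omega$-stability. For $I=\alpha$ a countable infinite ordinal, I continue by transfinite induction; the successor case is identical, but the limit step needs a direct counting argument. Given a countable $A \subseteq M \models \ACF^\alpha$, I stratify $S_1(A)$ by the minimal level $\ell(a):=\min\{\gamma \leq \alpha : a \in P_\gamma\}$ (with $P_\alpha := M$), of which there are countably many values. For each $\beta<\alpha$, $P_\beta$ is stably embedded in $M$ (an analogue of \cref{stably embedded} proved from \cref{extend automorphism tuples of acf}), so the $\ACF^\alpha$-type of an element of level $\beta$ over $A$ is controlled by its $\ACF^\beta$-type over a countable $A^*_\beta \subseteq P_\beta$ capturing $A \cap P_\beta$ together with the canonical parameters of $A$-definable subsets of $P_\beta$; by the inductive hypothesis there are $\leq \aleph_0$ such types. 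The top level $\ell(a)=\alpha$ (where $a$ is transcendental over $\bigcup_{\gamma<\alpha} P_\gamma$) is handled analogously. Summing over the countably many levels gives $|S_1(A)| \leq \aleph_0$.

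For (2), I reduce to (1) via a Morley rank argument. Suppose for contradiction that some $L^I$-formula has Morley rank $\infty$ in $\ACF^I$, witnessed by a binary tree $(X_s)_{s \in 2^{<\omega}}$ of nested pairwise disjoint nonempty definable sets. Each $X_s$ uses only finitely many predicates, so the union $I_0 \subseteq I$ of all indices involved is countable. Since nestedness and disjointness are preserved under reduct, the same tree witnesses Morley rank $\infty$ in $\ACF^{I_0}$. But $I_0$ inherits the well-ordering from $I$ and is countable, so by (1) $\ACF^{I_0}$ is $\omega$-stable in the countable language $L^{I_0}$, hence totally transcendental---a contradiction.

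For (3), I construct an explicit infinite forking chain. Since $I$ is not well-ordered, fix a strictly descending sequence $i_0 > i_1 > \cdots$ in $I$, giving $P_{i_0} \supsetneq P_{i_1} \supsetneq \cdots$. In the monster, pick $b_0 \in P_{i_0}$ and inductively $d_n \in P_{i_n}$ transcendental over $P_{i_{n+1}}(b_0, d_1, \ldots, d_{n-1})$, and set $b_n := b_0 + d_1 + \cdots + d_n$; then $b_m - b_n \in P_{i_{n+1}}$ for $m>n$, so by compactness some $a \in \M$ satisfies $a - b_n \in P_{i_{n+1}}$ for every $n$. For each $n$, the formula $\phi_n(x,y) := (x-y \in P_{i_{n+1}})$ divides over $B_n := \{b_0, \ldots, b_{n-1}\}$: a Morley sequence $(b_n^{(k)})_k$ of $\tp(b_n/P_{i_{n+1}}(B_n))$ (and hence indiscernible over $B_n$) has pairwise differences $b_n^{(k)} - b_n^{(k')}$ transcendental over $P_{i_{n+1}}$, so in particular outside $P_{i_{n+1}}$, rendering the $\phi_n(x, b_n^{(k)})$ pairwise inconsistent. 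Hence $\tp(a/B_{n+1})$ divides over $B_n$ for each $n$, yielding an infinite forking chain and hence non-superstability. The main obstacle is the limit step in (1), which demands the stable-embedding and type-counting argument sketched above; the remaining steps are comparatively routine given the paper's earlier machinery.
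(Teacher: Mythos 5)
Your overall strategy for all three parts is sound, but the limit step in part~(1) has a genuine gap. Your stratification by $\ell(a)=\min\{\gamma\le\alpha : a\in P_\gamma\}$ works cleanly for levels $\beta<\alpha$: stable embedding of $P_\beta$ and the inductive hypothesis bound the $\ACF^\alpha$-types over $A$ of elements of $P_\beta$, exactly as in \cref{lambda stability}. However, the top level $\ell(a)=\alpha$ is \emph{not} ``handled analogously,'' and the parenthetical gives it away: $\ell(a)=\alpha$ means $a$ is transcendental over $P_{<\alpha}=\bigcup_{\gamma<\alpha}P_\gamma$, which is strictly weaker than being transcendental over $P_{<\alpha}(A)$. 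Elements that are transcendental over $P_{<\alpha}$ but \emph{algebraic} over $P_{<\alpha}(A)$ all sit at level $\alpha$, and there is no stable-embedding argument available there (that would require a proper $P_\alpha$, but $P_\alpha=M$). The paper's proof splits differently: it first shows that all $a$ transcendental over $P_{<\alpha}(C)$ have the same type over $C$, and then handles $a\in\overline{P_{<\alpha}(C)}$ by observing that $a$ satisfies a nonzero polynomial with coefficients in $P_\beta\cup C$ for some $\beta<\alpha$, after which stable embedding of $P_\beta$ is used to count types of the \emph{coefficient tuple} rather than of $a$ itself, with finitely many roots per tuple. You need this extra layer; the stratification by $\ell(a)$ alone does not close the argument.

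Parts~(2) and~(3) are correct. Part~(2) is the same finite-support reduction as in the paper. For part~(3) you take a different and more explicit route: building a concrete dividing chain $\tp(a/B_{n+1})$ over $B_n$ witnessing $\kappa(T)>\aleph_0$, whereas the paper simply notices that $(P_{i_k},+)$ is a descending chain of definable additive subgroups each of infinite index in its predecessor and cites the standard group-theoretic criterion for non-superstability. Your argument requires checking that the conjugates $(b_n^{(k)})_k$ form a $B_n$-indiscernible sequence whose pairwise differences avoid $P_{i_{n+1}}$ (which does follow from taking a non-forking Morley sequence and the fact that non-forking in $\ACF^I$ refines algebraic independence in the ACF-reduct), so it is heavier than the paper's one-line observation, but it does work.
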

    
\begin{proof}
    Fix a completion $\ACF^I_p$ (by \cref{tuples of ACF}).
    
    (1) The theory $\ACF^I_p$ depends only on the order type of $I$, up to renaming predicates, so it is enough to prove for $I=\alpha$ a finite or countable ordinal.
    We will prove that $\ACF^\alpha_p$ is $\omega$-stable by transfinite induction on $\alpha<\omega_1$.
    For $\alpha=0$, $\ACF^0_p=\ACF_p$ is $\omega$-stable.
    If $\ACF_p^\alpha$ is $\omega$-stable, then note that $\ACF_p^{\alpha+1}=\ACF_{\ACF_p^\alpha}$ where we name the added predicate $P_\alpha$, so by \cref{omega-stable} $\ACF_p^{\alpha+1}$ is $\omega$-stable.
    
    Suppose that $\alpha$ is a countable limit ordinal and for every $\beta<\alpha$, $\ACF_p^\beta$ is $\omega$-stable, the proof that $\ACF_p^\alpha$ is $\omega$-stable will be similar to the proof of \cref{lambda stability}.
    Let $\M\vDash \ACF_p^\alpha$ be a monster model and let $C\subseteq \M$ be a countable subset.
    Denote $P_{<\alpha}=\bigcup_{\beta<\alpha} P_\beta$. 
    First we will show that every two elements in $\M\setminus \overline{P_{<\alpha}(C)}$ have the same type over $C$.
    Let $a_0,a_1\in\M\setminus \overline{P_{<\alpha}(C)}$, for every  $\beta<\alpha$, $a_0$ and $a_1$ are transcendental over $P_\beta(C)$ so by \cref{extend to automorphism} there is an automorphism of $\M\restriction{L^{\beta+1}}$ preserving $P_\beta(C)$ and mapping $a_0\mapsto a_1$.
    Thus, $a_0\equiv_C a_1$ in $L^{\beta+1}$ for every $\beta<\alpha$, so $a_0\equiv_C a_1$ in $L^\alpha$, as every formula in $L^\alpha$ belongs to some $L^{\beta+1}$ where $\beta$ is the largest ordinal such that $P_\beta$ appears in the formula.
    
    Now we will show that there at most countably many types over $C$ realized in $\overline{P_{<\alpha}(C)}$.
    Any element $a\in \overline{P_{<\alpha}(C)}$ solves some non-zero polynomial of the form $q(x;b,c)$ with $b\in P_{<\alpha}^n$ and $c\in C^m$. 
    There is some $\beta<\alpha$ such that $b\in P_\beta^n$, in particular $a$ satisfies
    $$\phi(x;c)=\exists y\in P_\beta\ (q(x;y,c)=0\land \exists x' q(x';y,c)\ne 0).$$
    Thus, any type in $\overline{P_{<\alpha}(C)}$ contains some formula $\phi(x;c)$ as above.
    There are countably many formulas in $L^\alpha$ with parameters from $C$, so it is enough to prove that there are at most countably many types that contain any given formula $\phi(x;c)$ as above.
    
    First of all, $P_\beta$ is stably embedded in $\M$ (every automorphism of $P_\beta$ can be extended to an automorphism of $\M$ so we can use \cref{condition stably embedded}; alternatively, $\ACF_p^\alpha$ is stable so every definable subset is stably embedded), so every $C$-definable subset of $P_\beta^n$ is also definable in $\ACF_p^\alpha$ with parameters from $P_\beta$. 
    Let $D\subseteq P_\beta$ be the set of all the parameters needed to define every $C$-definable subset of $P_\beta^n$.
    There are at most countably many definable subsets of $P_\beta^n$ over $C$, so $D$ is countable.
    
    Let $[\phi]\subseteq S_1^{\ACF_p^\alpha}(C)$ be the set of types implying $\phi(x;c)$ as above, we will construct a map $\rho:[\phi]\to S^{\ACF_p^\beta}_n(D)$ such that $\rho$ has finite fibers.
    Because $\ACF_p^\beta$ is $\omega$-stable, $\abs{S^{\ACF_p^\beta}_n(D)}$ is countable, so this will imply that $[\phi]$ is countable as needed.
    
    For any type $p(x)\in [\phi]$, choose some realization $a\vDash p$. 
    In particular, $\vDash \phi(a;c)$, so we can choose some $b\in P_\beta^n$ such that $q(x;b,c)$ is non-zero and $q(a;b,c)=0$.
    Define $\rho(p)=\tp^{\ACF_p^\beta}(b/D)$.
    Suppose $p_0,p_1\in[\phi]$ and $\rho(p_0)=\rho(p_1)$, that is, if $a_i$ and $b_i$ are the specific elements we chose for $p_i$ ($i=0,1$), then $b_0\equiv_D b_1$ in $\ACF_p^\beta$.
    There is an automorphism of $P_\beta$ over $D$ mapping $b_0\mapsto b_1$, which can be extended by \cref{extend automorphism tuples of acf} to an automorphism of $\M$ over $D$, so $b_0\equiv_D b_1$ in $\ACF_p^\alpha$. 
    We want to prove that $b_0\equiv_C b_1$ in $\ACF_p^\alpha$.
    Suppose $b_0$ belongs to some $C$-definable set, we can assume that it is a subset of $P_\beta^n$ because $b_0\in P_\beta^n$.
    By the construction of $D$, this $C$-definable subset of $P_\beta^n$ is also $D$-definable in $\ACF_p^\alpha$, so $b_1$ belongs to it as $b_0\equiv_D b_1$ in $\ACF_p^\alpha$.
    
    Let $\sigma\in \mathrm{Aut}(\M/C)$ be an automorphism mapping $b_0\mapsto b_1$.
    In particular $q(\sigma(a_0);b_1,c)=0$, thus $a_0$ has the same type over $C$ as a root of $q(x;b_1,c)$, specifically $\sigma(a_0)$.
    It follows that every type in the fiber of $\rho(p_1)$ is a type over $C$ of a root of $q(x;b_1,c)$, however $q(x;b_1,c)$ is non-zero, so it has only finitely many roots. 
    Thus, $\rho$ has finite fibers.
    
    (2) Suppose $I$ is uncountable and well-ordered.
    If $\ACF_p^I$ was not totally transcendental, there would be a binary tree of consistent formulas $\set{\phi_s(x;c_s)}_{s\in 2^{<\omega}}$ (see \cite[Definition 5.2.5]{tent_ziegler_2012}).
    Let $I_0\subseteq I$ be the finite or countable subset of indexes $i\in I$ such that $P_i$ appears in some formula $\phi_s$.
    The tree $\set{\phi_s(x;c_s)}_{s\in 2^{<\omega}}$ is also a binary tree of consistent formulas in $\ACF_p^{I_0}$, so $\ACF_p^{I_0}$ is not totally transcendental.
    However, a subset of a well-ordered set is also well-ordered, so by the previous part $\ACF_p^{I_0}$ is $\omega$-stable and in particular totally transcendental.
    
    However, $\ACF^I$ can not be $\omega$-stable, as it is not interdefinable with a theory in a countable language --- each $P_i$ for $i\in I$ is a distinct definable set.
    
    (3) Note that an ordered set $I$ is well-ordered iff $I$ does not contain an infinite descending chain.
    If $I$ is not well-ordered, let $(i_k)_{k<\omega}\subseteq I$ be a descending chain, then $(P_{i_k})_{k<\omega}$ is a descending chain of definable subfields in $\ACF_p^I$.
    Considering only the additive group structure, $(P_{i_k})_{k<\omega}$ is a descending chain of definable subgroups each of infinite index in the previous one, so $\ACF_p^I$ is not superstable (see e.g. \cite[Exercise 8.6.10]{tent_ziegler_2012}).
\end{proof}

\subsection{Complete system of a Galois group}
For a profinite group $G$ one can associate a structure $S(G)$, called  the complete system of $G$, in a multi-sorted language.
This definition is due to \cite{CDM81}, we will present the definition as given in \cite[Definition 7.1.6]{ramsey2018}.

\begin{definition}
  Suppose $G$ is a profinite group.
  Let $\mathcal{N}(G)$ be the collection of open normal subgroups of $G$.
  Define
  $$S(G)=\coprod_{N\in \mathcal{N}(G)} G/N.$$
  Let $L_G$ be the language with a sort $X_n$ for each $n<\omega$, two binary relation symbols $\le$, $C$ and a ternary relation $P$.
  We regard $S(G)$ as an $L_G$-structure in the following way:
  \begin{itemize}
      \item The coset $gN$ is in the sort $X_n$ iff $[G:N]\le n$.
      \item $gN\le hM$ iff $N\subseteq M$.
      \item $C(gN,hM)$ iff $N\subseteq M$ and $gM=hM$.
      \item $P(g_1N_1,g_2N_2,g_3N_3)$ iff $N_1=N_2=N_3$ and $g_1g_2N_1=g_3N_1$.
  \end{itemize}
  Note that we do not require the sorts to be disjoint (see \cite[\S 1]{Chatzidakis1998} for a discussion on the syntax of this structure).
\end{definition}

For a field $F$, let $G(F)=\mathrm{Gal}(\overline{F}/F)$ be the absolute Galois group of $F$, which is profinite.
In \cite[Corollary 7.2.7]{ramsey2018}, Ramsey proved that if $F$ is a $\PAC$ field such that $\Th(S(G(F))$ is $\NSOP_1$, then $\Th(F)$ is $\NSOP_1$.
We will prove the other direction, using the following fact, proved in \cite[Proposition 5.5]{chatzidakis2002}.

\begin{fact} \label{complete system interpretable}
    $S(G(F))$ is interpretable in $(K,F)$ where $K$ is any algebraically closed field extending $F$.
\end{fact}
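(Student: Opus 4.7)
The plan is to interpret the multi-sorted structure $S(G(F))$ inside $(K,F)$ by exploiting the bijection between open normal subgroups $N\le G(F)$ of index at most $n$ and finite Galois extensions $L$ of $F$ of degree at most $n$ sitting inside $\overline F\subseteq K$: the subgroup $N=\mathrm{Gal}(\overline F/L)$ has $G(F)/N\cong \mathrm{Gal}(L/F)$, and the coset $gN$ corresponds to $g|_L$. Thus it suffices to definably parametrize in $(K,F)$ the pairs $(L,\sigma)$ with $L/F$ Galois of degree $\le n$ and $\sigma\in\mathrm{Gal}(L/F)$, and to read off the relations $\le$, $C$, $P$ from this parametrization.

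First I would encode each such pair $(L,\sigma)$ by a pair $(\alpha,\beta)\in K^2$ where $\alpha$ is a primitive element for $L$ and $\beta=\sigma(\alpha)$. The required conditions are all first-order in $(K,F)$: the degree $d\le n$ of the minimal polynomial $f_\alpha\in F[x]$ is captured by the minimal $d$ such that $1,\alpha,\ldots,\alpha^d$ are linearly dependent over $F$ (using the predicate for $F$ to quantify over coefficients); separability is expressed by $f_\alpha$ having $d$ distinct roots in $K$; the Galois (i.e.\ normal) condition is that every root of $f_\alpha$ in $K$ lies in $F[\alpha]$; and $\beta$ itself must be a root of $f_\alpha$. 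Two encodings $(\alpha,\beta)$ and $(\alpha',\beta')$ are declared equivalent iff $F(\alpha)=F(\alpha')$ (equivalently $\alpha\in F[\alpha']$ and $\alpha'\in F[\alpha]$) and the induced automorphisms agree on this common field; the latter is checked by writing $\alpha'=p(\alpha)$ for some $p\in F[x]$ and demanding $\beta'=p(\beta)$.

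Second, the relations $\le$, $C$, $P$ translate into first-order conditions on such encodings: $\le$ becomes $F(\alpha_2)\subseteq F(\alpha_1)$, i.e.\ $\alpha_2\in F[\alpha_1]$; $C$ further requires, after writing $\alpha_2=q(\alpha_1)$ for some $q\in F[x]$, that $\beta_2=q(\beta_1)$; and $P$, after passing via the equivalence relation to a common primitive element $\alpha$ (so $\sigma_i(\alpha)=\beta_i$), becomes $\beta_3=r(\beta_1)$ where $r\in F[x]$ is determined by $\beta_2=r(\alpha)$. The main obstacle is really bookkeeping rather than mathematics: writing the formulas uniformly in $n$, handling the non-disjointness of the sorts by supplying the evident inclusions $X_n\hookrightarrow X_{n+1}$ (which are trivial from the encoding), and verifying that the equivalence relation correctly matches the many primitive-element encodings of the same coset $gN$. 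Once this is done, every clause in the definition of $S(G(F))$ becomes a formula of $(K,F)$ with quantifiers bounded either by $F$ or by $K$, and we obtain the required interpretation.
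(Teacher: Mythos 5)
The paper does not prove this statement itself: it is presented as a \emph{Fact} with a citation to Chatzidakis (Proposition 5.5 of the cited reference), so there is no internal argument to compare yours against. Your sketch correctly reconstructs the standard proof that lies behind that citation: one encodes a coset $gN$, with $N=\mathrm{Gal}(\overline F/L)$ for $L/F$ finite Galois, by a pair $(\alpha,\beta)\in K^2$ with $\alpha$ a primitive element and $\beta=\sigma(\alpha)$, and then reads off $\le$, $C$, $P$ via the polynomials expressing one primitive element in terms of another. The individual steps you flag as ``first-order in $(K,F)$'' are indeed so, and for the reasons you state: the degree bound $d\le n$ lets you quantify over polynomial coefficients in $F$ of bounded length, algebraic closedness of $K$ lets you express separability and normality via roots in $K$, and the ambiguity in the choice of $p\in F[x]$ with $p(\alpha)=\alpha'$ is harmless because any two such $p$ differ by a multiple of $f_\alpha$, which also annihilates $\beta$. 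One small point worth making explicit in a full write-up is the orientation convention for $P$ (whether $g_1g_2$ means apply $g_2$ first), since the formula $\beta_3=r(\beta_1)$ with $\beta_2=r(\alpha)$ encodes $\sigma_3=\sigma_1\circ\sigma_2$; under the standard convention this matches. Aside from the bookkeeping you already acknowledge (uniformity in $n$, the inclusions $X_n\hookrightarrow X_{n+1}$, well-definedness modulo the equivalence relation), the argument is sound and is essentially the one in the cited source.
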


\begin{proposition}\label{galois group nsop1}
  Let $F$ be a PAC field. Then $\Th(F)$ is $\NSOP_1$ iff $\Th(S(G(F)))$ is $\NSOP_1$. 
\end{proposition}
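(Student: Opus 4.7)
The plan is to combine three ingredients that are already available in the paper. One direction, namely that NSOP$_1$ of $\Th(S(G(F)))$ implies NSOP$_1$ of $\Th(F)$ for PAC fields $F$, is exactly Ramsey's result \cite[Corollary 7.2.7]{ramsey2018} quoted just before the statement, so nothing needs to be done there; the content is the converse.

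For the converse, suppose $\Th(F)$ is NSOP$_1$. Let $K$ be an algebraically closed field extending $F$, so that $(K,F) \models \ACF_{\Th(F)}$. By \cref{nsop1}, lifting NSOP$_1$ from a theory of fields $T$ to $\ACF_T$ holds unconditionally, so $\Th(K,F)=\ACF_{\Th(F)}$ is NSOP$_1$. By \cref{complete system interpretable}, the complete system $S(G(F))$ is interpretable in the pair $(K,F)$. Since NSOP$_1$ is preserved under interpretation (a formula with SOP$_1$ in the interpreted structure pulls back to a formula with SOP$_1$ in the interpreting theory), we conclude that $\Th(S(G(F)))$ is NSOP$_1$.

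There is essentially no obstacle here: the substantive work was done in \cref{nsop1}, and the interpretability of $S(G(F))$ in $(K,F)$ is standard (indeed it was the original motivation stated in the introduction for studying these pairs). The only point worth being careful about is that the interpretation of $S(G(F))$ in $(K,F)$ is a multi-sorted interpretation (with sorts $X_n$ indexed by $n<\omega$), but preservation of NSOP$_1$ under interpretation does not require any single-sortedness assumption, so the argument goes through without modification.
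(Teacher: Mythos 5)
Your proof is correct and is essentially identical to the paper's own argument: Ramsey's result for one direction, and \cref{nsop1} plus the interpretability of $S(G(F))$ in $(K,F)$ for the other. The only tiny point the paper is more careful about is that $K$ must be taken large enough that $[K:F]=\infty$ (needed for $(K,F)\models\ACF_{\Th(F)}$), which matters only in the degenerate case where $F$ is itself algebraically closed.
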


\begin{proof}
    The left to right direction is \cite[Corollary 7.2.7]{ramsey2018}

    For the right to left direction, let $K\supseteq F$ be a large enough algebraically closed extension, $(K,F)\vDash \ACF_{\Th(F)}$.
    From \cref{nsop1} $\ACF_{\Th(F)}$ is $\NSOP_1$, but from \cref{complete system interpretable} $S(G(F))$ is interpretable in $(K,F)$, so $\Th(S(G(F))$ is $\NSOP_1$.
\end{proof}

\subsection{Pseudo finite fields}
Pseudo finite fields were first studied in \cite{Ax1968}, we will give the definition from \cite{tent_ziegler_2012}.

\begin{definition} \label{def PSF}
  Suppose $F$ is a field.
  We say that $F$ is \emph{pseudo-algebraically closed} if every absolutely irreducible variety over $F$ has an $F$-rational point, or equivalently if it is existentially closed in every regular extension.
  We say that $F$ is \emph{pseudo-finite} if it is perfect, pseudo-algebraically closed and 1-free (has exactly one extension of degree $n$ for every $n$).
  Being pseudo-algebraically closed or pseudo-finite is an elementary property \cite[Corollary B.4.3, Remark B.4.12]{tent_ziegler_2012}, so there are first-order theories $\PAC$, $\PSF$ of pseudo-algebraically closed, pseudo-finite fields respectively.
\end{definition}

\begin{proposition} \label{psf model complete}
  $\ACF_\PSF^{ld}$ is model complete.
\end{proposition}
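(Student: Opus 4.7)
The plan is to apply Theorem \cref{model completeness}: by the equivalence of conditions $(2)$ and $(3)$ there, $\ACF_\PSF^{ld}$ is model complete if and only if every regular field extension $Q \subseteq R$ between models of $\PSF$ is elementary. Since pseudofinite fields are perfect, separability is automatic, so the regularity assumption on $Q \subseteq R$ reduces in this setting to requiring that $Q$ be relatively algebraically closed in $R$.

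Thus the task reduces to establishing the following classical fact about pseudofinite fields: if $Q \subseteq R$ is an extension of pseudofinite fields with $Q$ relatively algebraically closed in $R$, then $Q \prec R$. This is a theorem of Ax (\cite{Ax1968}) on pseudofinite fields. It follows from Ax's characterization of the completions of $\PSF$ (pseudofinite fields of a fixed characteristic being elementarily equivalent if and only if their intersections with the algebraic closure of the prime field are isomorphic), together with the observation that $Q \cap \tilde{\F}_p = R \cap \tilde{\F}_p$ whenever $Q$ is relatively algebraically closed in $R$, and the fact that pseudofinite fields are bounded PAC fields with absolute Galois group $\hat{\Z}$, placing them in the scope of the general elementary equivalence/embedding machinery for PAC fields developed by Cherlin--van den Dries--Macintyre.

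The only substantive step is the citation of Ax's theorem in this relative form; once that is in hand, the conclusion is immediate via Theorem \cref{model completeness}. There is no real obstacle beyond bookkeeping: the verification that the field-theoretic hypothesis of relative algebraic closedness (extracted from regularity via perfectness of pseudofinite fields) matches the hypothesis of Ax's theorem is straightforward, and Theorem \cref{model completeness} does the rest of the work in transporting the model-completeness statement from $\PSF$ to $\ACF_\PSF^{ld}$.
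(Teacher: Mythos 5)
Your proposal follows essentially the same route as the paper: both reduce via \cref{model completeness} (the equivalence of conditions (2) and (3)) to the classical fact that a relatively algebraically closed extension of pseudofinite fields is elementary, noting that perfectness of pseudofinite fields collapses regularity to relative algebraic closedness. The paper simply cites this fact directly (Fried--Jarden, Proposition~20.10.2) rather than re-deriving it from Ax's elementary-equivalence theorem and the PAC embedding machinery, but the substance of the argument is identical.
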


\begin{proof}
    If $Q$ and $R$ are pseudo-finite fields such that $Q\subseteq R$ is a relatively algebraically closed extension, that is $\overline{Q}\cap R=Q$, then $Q\preceq R$ \cite[Proposition 20.10.2]{fried2008field}.
    In particular, if $Q\subseteq R$ is a regular extension, then it is relatively algebraically closed, so $Q\preceq R$.
    Thus, by \cref{model completeness}, $\ACF_\PSF^{ld}$ is model complete.
\end{proof}

\begin{proposition}\label{psf simple}
  Every completion of $\ACF_\PSF$ is simple.
\end{proposition}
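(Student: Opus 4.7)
The plan is to invoke \cref{simple}, which says that $\ACF_T$ is simple whenever $T$ is simple, and apply it with $T = \PSF$. So the only real content needed is to recall that the theory of pseudo-finite fields is itself simple. This is a classical result; it follows, for instance, from the work of Chatzidakis--van den Dries--Macintyre on elementary invariants for pseudo-finite fields together with Hrushovski's analysis, and can be extracted from Chatzidakis's survey on model theory of pseudo-finite fields, where the forking calculus on $\PSF$ is worked out explicitly via algebraic independence over the relative algebraic closure.

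Having cited simplicity of $\PSF$, the conclusion is immediate from \cref{simple}: the monster model of $\ACF_\PSF$ is a pair $(K,F)$ with $K \models \ACF$ and $F \models \PSF$, and preservation gives $\ACF_\PSF$ simple. The main obstacle, such as it is, is purely bibliographic — one must point the reader to the correct reference for simplicity of $\PSF$ (which in this paper's bibliography would most naturally be the Chatzidakis paper or a textbook reference), rather than proving anything new. Since this proposition is advertised as an ``alternative proof'' (the introduction notes that simplicity of $\ACF_\PSF$ was already obtained via the preservation machinery), the authors presumably have in mind an independent, more direct argument describing forking in $\ACF_\PSF$ through a Kim--Pillay style characterization of the relation $\forkindep^*$ from \cref{D:star independence}, using that non-forking in $\PSF$ is governed by algebraic (rather than merely linear) disjointness. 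But the cleanest formal proof, and the one I would write, is the one-line invocation of \cref{simple} together with a citation for the simplicity of $\PSF$.
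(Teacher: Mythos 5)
Your primary proof — cite simplicity of $\PSF$ (the paper uses \cite[Corollary 7.5.6]{tent_ziegler_2012}) and invoke \cref{simple} — is exactly the first sentence of the paper's own proof, so that part is correct and matches the paper.

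Your guess about the content of the ``alternative'' proof is off the mark, though. The paper does not develop a Kim--Pillay characterization of $\forkindep^*$ for $\ACF_\PSF$. Instead, it observes that any pseudo-finite $P$ is the fixed field of some $\sigma\in\mathrm{Gal}(\overline{P}/P)$ (taking compatible generators of each $\mathrm{Gal}(P_n/P)$), embeds the difference field $(\overline{P},\sigma)$ into a model $(N,\sigma')$ of $\ACFA$ with $\mathrm{Fix}(\sigma')=P$ (citing \cite[Corollary 1.2]{Afshordel2014GENERICAW}), and notes $(N,P)$ is a reduct of $(N,\sigma')$ and therefore simple. Then, since $(\overline{P},P)\subseteq(M,P)$ and $(\overline{P},P)\subseteq(N,P)$ are $L^{ld}$-substructures and $\ACF_\PSF^{ld}$ is model complete (\cref{psf model complete}), these inclusions are elementary, and simplicity transfers from $(N,P)$ to $(\overline{P},P)$ to $(M,P)$. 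So the alternative route is an interpretation/reduct argument built on model-completeness and $\ACFA$, not a forking-calculus argument; what it buys is a self-contained, hands-on source of simplicity that bypasses the preservation machinery of \cref{simple} entirely.
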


\begin{proof}
    By \cref{complete}, completions of $\ACF_\PSF$ are given by completions of $\PSF$, which are simple by \cite[Corollary 7.5.6]{tent_ziegler_2012}, so the result follows from \cref{simple}.
    We will give another more direct proof using $\ACFA$, the model companion of difference fields, which is simple \cite[Example 2.6.9]{kim2014simplicity}.
    
    Let $(M,P)\vDash \ACF_\PSF$.
    We will show that there is an automorphism $\sigma\in \mathrm{Gal}(\overline{P}/P)$ such that $\mathrm{Fix}(\sigma):=\set{a\in \overline{P}\mid \sigma(a)=a}=P$. 
    Consider $P_n$ the unique cyclic extension of degree $n$ of $P$ and $\sigma_n$ a generator of $\mathrm{Gal}(P_n/P)$. 
    The fixed field of $\sigma_n$ is $P$, so the inverse limit of $\sigma_n$ is an automorphism of $\bar{P}$ whose fixed field is $P$.
    
    By \cite[Corollary 1.2]{Afshordel2014GENERICAW}, we can embed $(\overline{P},\sigma)$ into $(N,\sigma')$ a model of $\ACFA$, with $\mathrm{Fix}(\sigma')=P$.
    The structure $(N,P)$ is a reduct of $(N,\sigma')$, so it is simple.
    The structures $(M,P)$, $(N,P)$ and $(\overline{P},P)$ are models of $\ACF_\PSF$, and they can be uniquely expanded to models of $\ACF_\PSF^{ld}$. 
    \cref{submodel} implies that $(\overline{P},P)\subseteq (M,P)$, $(\overline{P},P)\subseteq (N,P)$ are substructures in $\ACF_\PSF^{ld}$, because they all share the same predicate.
    However, \cref{psf model complete} says that $\ACF_\PSF^{ld}$ is model complete, so those are elementary substructures.
    In particular, they are elementary substructures in $\ACF_\PSF$.
    Because $(N,P)$ is simple and $(\bar{P},P)\preceq (N,P)$, we get that $(\bar{P},P)$ is simple.
    But also $(\bar{P},P)\preceq (M,P)$, so $(M,P)$ is simple.
\end{proof}

\section{Questions}

There are several questions that arose in our work, which we did not address in this paper.

\begin{question}
    What other classification properties can we lift from $T$ to $\ACF_T$? $\mathrm{NTP}_2$, $\mathrm{NSOP}_n$ (for $n\ge 2$)?
\end{question}

\begin{question}
    What results still hold when we replace $\ACF$ in $\ACF_T$ with a different theory of fields?
    $\mathrm{SCF}$, $\mathrm{ACVF}$?
    The theory of dense pairs of $\mathrm{ACVF}$ was studied in \cite{Delon2012}. 
\end{question}

\begin{question}\label{question strongly minimal}
    What results still hold when we replace $\ACF$ in $\ACF_T$ with any strongly minimal theory?
    See \cref{strongly minimal stable}.
\end{question}

\bibliographystyle{alpha}
\bibliography{library}

\end{document}